\def\mr{\mathrm}
\newcommand \zetaK{\zeta_{_K}}
\def\T{{ \mathrm{\scriptscriptstyle T} }}
\def\st{{ \mathrm{\scriptscriptstyle st} }}
\def\MF{{ \mathrm{\scriptscriptstyle MF} }}
\def\ELBO{{ \mathrm{\textstyle ELBO} }}
\def\m{\mathcal}
\def\mb{\mathbb}
\def\mr{\mathrm}
\def\wt{\widetilde}
\def\ind{\mathbbm{1}}
\def\T{\mathrm{\scriptscriptstyle{T}}}
\DeclareMathOperator{\TV}{TV}
\DeclareMathOperator*{\argmax}{arg\,max}
\def\m{\mathcal}
\def\mb{\mathbb}
\def\wt{\widetilde}
\def\T{{\mathrm{\scriptscriptstyle T} }}
\def\mr{\mathrm}
\def\ind{\mathbbm{1}}
\newcommand \bbP{\mathbb{P}}
\newcommand \bbE{\mathbb{E}}
\newcommand{\be}{\begin{equs}}
\newcommand{\ee}{\end{equs}}
\numberwithin{equation}{section}
\theoremstyle{plain}
\newtheorem{thm}{Theorem}[section]
\newtheorem{lemma}{Lemma}[section]
\newtheorem{corollary}{Corollary}[section]
\newtheorem{prop}{Proposition}[section]
\newtheorem{remark}{Remark}[section]
\title{Evidence bounds in singular models: probabilistic and variational perspectives 
}
\author[1]{Anirban Bhattacharya\thanks{anirbanb@stat.tamu.edu}}
\author[1]{Debdeep Pati\thanks{debdeep@stat.tamu.edu}}
\author[1]{Sean Plummer\thanks{splumme@tamu.edu}}
\affil[1]{Department of Statistics,  Texas A\&M University, College Station, Texas, 77843, USA}
\begin{document}
\maketitle

\begin{abstract}
The marginal likelihood or evidence in Bayesian statistics contains an intrinsic penalty for larger model sizes and is a fundamental quantity in Bayesian model comparison. Over the past two decades, there has been steadily increasing activity to understand the nature of this penalty in singular statistical models, building on pioneering work by Sumio Watanabe. Unlike regular models where the Bayesian information criterion (BIC) encapsulates a first-order expansion of the logarithm of the marginal likelihood, parameter counting gets trickier in singular models where a quantity called the real log canonical threshold (RLCT) summarizes the effective model dimensionality. In this article, we offer a probabilistic treatment to recover non-asymptotic versions of established evidence bounds as well as prove a new result based on the Gibbs variational inequality. In particular, we show that mean-field variational inference correctly recovers the RLCT for any singular model in its canonical or normal form. We additionally exhibit sharpness of our bound by analyzing the dynamics of a general purpose coordinate ascent algorithm (CAVI) popularly employed in variational inference. 

\end{abstract}

{\bf Keywords:}
Bayesian; Coordinate ascent; Gibbs variational inequality; Laplace approximation; Mean-field approximation; Real log canonical threshold

\section{Introduction}
Let $X^{(n)} = (X_1, \ldots, X_n)'$ denote $n$ independent and identically distributed observations from a 
probability density function $f(\cdot\mid \theta^\star)$. A Bayesian analysis in this setting proceeds by setting up (i) a statistical model consisting of a family of probability distributions $\{p(\cdot \mid \xi):  \xi \in \Omega\}$ for the individual observations, indexed by a parameter $\xi$ taking values in the parameter space $\Omega \subseteq \mb R^d$, and (ii) a prior (probability) distribution $\varphi(\cdot)$ on $\Omega$. The posterior distribution is given by
\be\label{eq:post}
\Pi(\xi \mid X^{(n)}) =  \frac{e^{\ell_n(\xi)} \varphi(\xi)}{m(X^{(n)})},  \quad \ell_n(\xi) :\,= \sum_{i=1}^n \log p(X_i \mid \xi),
\ee 
with $\ell_n(\xi)$ the log-likelihood function. The marginal likelihood or evidence 
\be \label{eq:marg}
m(X^{(n)}) = \int_{\Omega} e^{\ell_n(\xi)} \varphi(\xi) d\xi
\ee
is a fundamental object in Bayesian model comparison \citep{robert2007bayesian}, which encapsulates an intrinsic penalty for model complexity, and can be readily used to compare models with different parameter dimensions. However, barring conjugate settings this integral is rarely available in closed-form, necessitating approximate methods. 

A classical approach is to make analytic approximations, of which the Laplace approximation \citep{schwarz1978estimating,tierney1986accurate,kasstierkad90} is the most prominent. In {\em regular} parametric models, under mild assumptions, the Laplace approximation to the marginal likelihood takes the form 
\be\label{eq:Lap}
\log m(X^{(n)}) = \ell_n(\widehat{\xi}_n) - \frac{d \log n}{2} + R_n, 
\ee
where $\widehat{\xi}_n$ is the maximum likelihood estimate for $\xi$ based on $X^{(n)}$, $d$ is the parameter dimension, and the remainder term $R_n$ is bounded in magnitude by a constant free of $n$ with high probability. The quantity $2(\log \ell_n(\widehat{\xi}_n) - d \log n/2)$ is the celebrated Bayesian information criterion (BIC). 

The usual notion of a regular statistical model entails $\xi \mapsto p(\cdot  \mid \xi)$ is one-one and the Fisher information matrix $\bbE [(\partial^2 / \partial \xi^2 ) \log f( \cdot \mid \xi)]$ is positive definite for all $\xi \in \Omega$. In this article, our focus will be on {\em singular} statistical models, where at least one of the conditions for regularity are not met. Some common examples of singular models include mixture models, factor models, hidden Markov models, latent class analysis, neural networks etc. to name a few; see \cite{drton2017bayesian} for a more comprehensive list. As a simple concrete illustration, suppose $p(x \mid \xi) = \alpha \,\m N(x; 0, 1) + (1 - \alpha) \m N(x; \mu, 1)$ with $\xi = (\alpha, \mu) \in [0, 1] \times \mb R$. The map $\xi \mapsto p(\cdot \mid \xi)$ is clearly not one-one as the entire region $\Omega_0 :\, = \{1\} \times \mb R \cup [0, 1] \times \{0\}$ inside the parameter space get mapped to the $\m N(0, 1)$ distribution. The Fisher information matrix is also not positive definite on $\Omega_0$. 

The derivation of the Laplace approximation proceeds by localizing the integral \eqref{eq:Lap} to a neighborhood of the maximum likelihood estimate (or the posterior mode) and subsequently applying a second-order Taylor series expansion of the log-likelihood around $\widehat{\xi}_n$ to reduce the integral Eq.\,\eqref{eq:Lap} to a Gaussian integral. It should perhaps then be intuitive that this approximation will face difficulties for singular models where the Hessian matrix can be singular. This is indeed the case and can be verified via simulation in a straightforward manner; see, e.g., the instructive Example 1 of \cite{drton2017bayesian}. However, finding the precise asymptotic behavior of the marginal likelihood for general singular models is a highly non-trivial exercise. The foundational groundwork for a general theory has been laid in a series of seminal contributions by Watanabe \citep{watanabe1999algebraic,watanabe2001balgebraic,watanabe2001algebraic}, with much of the subsequent development condensed into book-level treatments in \cite{watanabe2009algebraic,watanabe2018mathematical}. We also refer the reader to Shaowei Lin's thesis \citep{lin2011algebraic} and the background section of \cite{drton2017bayesian} for lucid summaries of this beautiful theory. 

Watanabe shows that in singular settings, a more general version of the Laplace approximation is given by
\be\label{eq:wata}
\log m(X^{(n)}) = \ell_n(\xi^\star) - \lambda \log n + (m-1) \log (\log n) + R_n,
\ee
assuming that the data is generated from $p(\cdot \mid \xi^\star)$. The stochastic error term $R_n$ is $O_{P^\star}(1)$ as before. The quantity $\lambda \in (0, d/2]$ is called the {\em real log-canonical threshold} (RLCT) and the integer $m \ge 1$ its {\em multiplicity}. 
Only when $\lambda = d/2$ and $m = 1$, one recovers the usual Laplace approximation as a special case of the expansion \eqref{eq:wata}. However, in general, the usual Laplace approximation no longer provides a correct approximation to the log evidence. For more on model selection in singular settings, we refer the reader to \cite{watanabe2013widely,drton2017bayesian}. 
Over the years, there has been a growing literature on determining (or bounding) $\lambda$ for specific singular statistical models; see 
\cite{yamazaki2003singularities,rusakov2005asymptotic,aoyagi2005stochastic,aoyagi2010stochastic,hayashi2017tighter,drton2017tree,aoyagi2019learning} for a flavor of this literature. 

Watanabe's derivation of Eq.\,\eqref{eq:wata} has two major ingredients. First, the parameter space is partitioned and parameter transformations are performed to express the integrand in Eq.\,\eqref{eq:marg} over each partition to a more manageable {\em normal crossing} (or simply, normal) form. The existence of such partitions and parameter transformations is guaranteed by a famous result in algebraic geometry due to Hironaka \citep{hironaka1964resolution} on the resolution of singularities. Watanabe then analyzes the asymptotic order of a generic integrand in normal form using complex analytic tools and Schwartz distribution theory \citep{friedlander1998introduction}. The RLCT and its multiplicity have simple analytically tractable expressions for an integral in normal form; see \S\,2 for the exact details.

In this article, we revisit the general problem of estimating an integral in normal form. Our primary motivation behind this work was to explore the possibility of deriving Eq.\,\eqref{eq:marg} exclusively using probabilistic arguments readily accessible to the wider statistics and machine learning audience. We approach this from two distinct angles -- one using more conventional arguments such as stochastic ordering and conditioning, while the other hinging on the Gibbs variational inequality. As a by-product of the probabilistic treatment, all our results are non-asymptotic in nature. We carry out the first part of this program in \S\,\ref{sec:probab}. We follow standard practice to first analyze a deterministic version of the problem, replacing the log-likelihood ratio with its expectation under the data generating model, and then proceed to handle the stochastic component. Interestingly, the RCLT and its multiplicity appear as the rate and shape parameters of a certain Gamma distribution in our analysis. 

Variational approaches \citep{mackay2003information,bishop2006pattern,wainwright2008graphical} have increasingly grown in popularity in Bayesian statistics as a different set of probabilistic tools to approximate the evidence. Variational Bayes (VB) aims to find the best approximation to the posterior (or another target) distribution from a class of tractable probability distributions, with the approximation error most commonly measured in terms of a Kullback--Leibler divergence. This scheme equivalently produces a lower bound to the log-marginal likelihood, commonly known as the evidence lower bound (ELBO). One of the most popular choices for the approximating class of distributions is the mean-field family constituting of product distributions, whose origins can be traced back to statistical physics \citep{parisi1988statistical}. The mean-field approximation has seen enormous applications in Bayesian statistics due to its simplicity as well as availability of general purpose coordinate ascent algorithms (CAVI; \cite{bishop2006pattern}) to approximate the optimal ELBO. 

In \S\,\ref{sec:var}, we show that mean-field variational inference correctly recovers the RLCT for normal forms, even though the posterior distribution itself has strong dependence and is far from a product structure (see Figure \ref{fig:ex12} for an example). To show this result, we first produce a candidate solution from the mean-field class which provides the correct order of the ELBO up to $\log(\log n)$ terms. Next, by analyzing the dynamics of the aforesaid coordinate ascent algorithms in the 2d case, we establish that the order of the ELBO at the candidate solution can not be globally improved, hence showing our bound is sharp. Studying the dynamics of the algorithm was also instrumental in guiding us towards an analytic form of the candidate solution. While asymptotics of the ELBO for mean-field VB have been studied in specific models such as mixture models \citep{watanabe2004gaussian, watanabe2005exponential, watanabe2006stochastic, watanabe2007mixture,  watanabe2007generalized}, hidden Markov models \citep{hosino2005vhmm}, stochastic context-free grammars \citep{hosino2006grammar}, and Boltzmann machines \citep{watanabe2009bipartite}, the general result proven here is new to the best of our knowledge. Our analysis adds to the emerging literature on algorithmic behavior of mean field VB \citep{zhang2017theoretical,mukherjee2018mean,ghorbani2018instability,plummer2020dynamics}. Beyond the Bayesian statistics literature, we were also inspired by the recent success of mean-field approximations to estimate the normalizing constant for probabilistic graphical models \citep{chatterjee2016nonlinear,Basak2017,austin2019structure,yan2020nonlinear}.

\section{Nonasymptotic probabilistic bounds for normal form}\label{sec:probab}
We begin with introducing some notation. We reserve the notations $\bbE^\star$ and $\bbP^\star$ to respectively denote expectation and probability under (the $n$-fold product of) $p(\cdot \mid \xi^\star)$, where $\xi^\star$ denotes the true data generating parameter. Let $K_n(\xi) = n^{-1} \, [\ell_n(\xi^\star) - \ell_n(\xi)]$ 
be the negative log-likelihood ratio scaled by a factor of $n^{-1}$, so that its $\bbE^\star$-expectation is the Kullback--Leibler divergence,
\be 
K(\xi) := \bbE^\star \big( K_n(\xi) \big) = D\big( p(\cdot \mid \xi^\star) \, \| \, p (\cdot \mid \xi) \big). 
\ee 
Unlike regular models, the set $\{\xi: K(\xi) = 0\}$ contains more than one point for singular models. Define 
\be\label{eq:marglik}
\m Z(n) = \int_\Omega e^{-n K_n(\xi)} \varphi(\xi), \quad \m Z_K(n) = \int_\Omega e^{-n K(\xi)} \varphi(\xi). 
\ee
It is immediate that $\m Z(n) = \log m(X^{(n)}) - \ell_n(\xi^\star)$, and it is equivalent to study the asymptotic behavior of $\m Z(n)$ with $n$. The deterministic quantity $\m Z_K(n)$ is closely related to $\m Z(n)$ as it is obtained by replacing the stochastic quantity $K_n(\xi)$ with its expectation $K(\xi)$ under the true distribution. Let us denote
\be\label{eq:Z_i}
Z_i(\xi) =  \log \bigg\{ \frac{ p(X_i \mid \xi^\star)}{p(X_i \mid \xi)} \bigg\} - \bbE^\star \log \bigg\{ \frac{ p(X_i \mid \xi^\star)}{p(X_i \mid \xi)} \bigg\}, \quad i = 1, \ldots, n,
\ee
so that $n^{-1} \sum_{i=1}^n Z_i(\xi) = [K_n(\xi) - K(\xi)]$ characterizes the difference between $K_n$ and $K$ as an average of i.i.d. random variables.  

\

\noindent {\bf Normal-crossing form.} Throughout the paper, we assume $K(\xi) = \xi^{2 \mr{k}} :\,= \xi_1^{2k_1} \ldots \xi_d^{2k_d}$ is a monomial with $\mr{k} = (k_1, \ldots, k_d)^\T \in \mb N^d$ a multi-index having at least one positive entry; and that the prior density $\varphi(\xi) = b(\xi) \, \xi^{\mr{h}}$, where $\mr{h} = (h_1, \ldots, h_d)^\T \in \mb N^d$ is another multi-index and $b(\cdot) > 0$ is a real analytic function on $\Omega$.
This setting is referred to as a {\em normal crossing form} or simply {\em normal form}. While these choices may seem very specific, they in fact completely encapsulate the complexity of the general problem. This impactful observation was made by Watanabe based on a deep result in algebraic geometry due to Hironaka on the resolution of singularities, and played a major role in the development of singular learning theory. A simplified form of Hironaka's theorem from Chapter 6 of \cite{watanabe2018mathematical} is quoted below with minor notational changes. 
\begin{thm}[Hironaka's theorem]\label{thm:Hironaka} Assume that $K(\xi) \geq 0$ is a nonzero analytic function on $\Omega$ and that the set $\{\xi \in \Omega: K(\xi) = 0\}$ is not empty. Then there exist $\epsilon > 0$, sets $\{\Xi_j; \Xi_j \subset \Omega \}$ and $\{U_j; U_j \subset \mb R^d\}$ such that $\{\xi \in \Omega: K(\xi) < \epsilon\} = \bigcup_j \Xi_j$, 
and, for each pair $(\Xi_j, U_j)$, there exist analytic maps $g_j : U_j \mapsto \Xi_j$ satisfying
\begin{eqnarray*}
K(g_j(u)) = u^{2\mr{k}_j}, \quad |g_j'(u)| = b_j(u)|u^{\mr{h}_j}|, 
\end{eqnarray*}
where $|g_j'(u)|$ denotes the absolute value of the determinant of the Jacobian matrix $J = (\partial \xi_i/\partial u_l)_{il}$
of the transformation $\xi = g_j(u)$. Moreover, $b_j(u) > 0$ for all $j$ and $\mr{k}_j, \mr{h}_j$ are multi-indices. 
\end{thm}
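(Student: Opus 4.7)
The plan is to reduce the statement to Hironaka's resolution of singularities for real analytic varieties, since the quoted result is essentially a packaged consequence of that theorem applied to the analytic function $K$. A self-contained proof is not realistic; I will sketch the strategy and then point at where the heavy lifting actually sits.

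First, since $K$ is a nonzero real analytic function on $\Omega$ with nonempty zero set $V = \{K = 0\}$, I would invoke resolution of singularities to produce, locally near each point $\xi_0 \in V$, a proper surjective analytic map $\sigma : Y \to \Omega$ from a real analytic manifold $Y$ such that $\sigma$ is an isomorphism away from $V$ and $\sigma^{-1}(V)$ is a simple normal crossings divisor. Concretely, $\sigma$ is realized as a finite composition of blow-ups along smooth centers nested inside the successive strict transforms of the singular locus. In local coordinates $u$ on a chart $U_j$ of $Y$, the pullback takes the form $K \circ \sigma(u) = u^{2 \mathrm{k}_j} \, c_j(u)$ with $c_j(u) > 0$ real analytic and $\mathrm{k}_j \in \mathbb{N}^d$. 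The evenness of the exponents reflects $K \geq 0$: the local cut-out of $V$ is the monomial $u^{\mathrm{k}_j}$, and nonnegativity of $K$ forces each appearing coordinate exponent to be even.

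Second, to reach the pure monomial form $K \circ g_j(u) = u^{2 \mathrm{k}_j}$, I would absorb the positive analytic unit $c_j$ by a further analytic coordinate change. Because $c_j > 0$ is real analytic, one can pick any coordinate $u_i$ with $k_{j,i} > 0$ and define $\tilde u_i = u_i \, c_j(u)^{1/(2 k_{j,i})}$, leaving the remaining coordinates untouched; the implicit function theorem shows this is a local analytic diffeomorphism shrinking the chart if needed. Tracking the Jacobian along the composite map $g_j = \sigma \circ (\text{coordinate change})$: each blow-up along a smooth center contributes a coordinate monomial times a nonvanishing analytic factor to $|\det J|$, and the final coordinate change contributes another positive analytic multiplier. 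Multiplying these factors gives $|g_j'(u)| = b_j(u) \, |u^{\mathrm{h}_j}|$ with $b_j > 0$ analytic and $\mathrm{h}_j \in \mathbb{N}^d$, as required.

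Finally, for the global covering assertion, I would pick $\epsilon > 0$ small enough that $\{K < \epsilon\}$ lies in a relatively compact neighborhood of $V$ on which the above local resolutions are defined; compactness of the preimage under the proper map $\sigma$ then permits extracting a finite subcover by charts $\Xi_j = \sigma(U_j)$. The genuine obstacle is Hironaka's desingularization theorem itself, whose proof proceeds by a subtle induction on a desingularization invariant (Hilbert--Samuel function, or the order of an idealistic exponent) and is far outside the scope of this article; I would simply cite Hironaka's original paper, or the analytic exposition of Bierstone and Milman, rather than attempt to reprove it.
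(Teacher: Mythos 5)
The paper does not prove this statement: it is explicitly quoted from Chapter 6 of Watanabe's book as a packaged form of Hironaka's resolution of singularities, with the proof left entirely to the cited references. Your proposal correctly recognizes this and your sketch of the reduction --- apply resolution to get $K\circ\sigma(u)=u^{2\mr{k}_j}c_j(u)$ with $c_j>0$, absorb the unit $c_j$ by a coordinate rescaling, track the Jacobian through the blow-ups, and use properness plus compactness for the finite cover of $\{K<\epsilon\}$ --- is accurate and consistent with how this result is treated in the literature the paper cites.
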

For a given $K(\cdot)$, the  theorem guarantees the existence of the coordinate maps $\{g_j\}$ under which $K$ can be locally identified with a monomial on each $U_j$. Hence, the overall integral is first expressed as the sum of integrals over each $\Xi_j$, and within each $\Xi_j$, a parameter transformation is made using the map $g_j$ to the reduce the integral to a normal form. 

The normal form offers a convenient reduction since the real log canonical threshold $\lambda$ and its multiplicity $m$ for normal forms are determined by the multi-indices $\mr{k}$ and $\mr{h}$ in a particularly simple fashion: $\lambda$ is the minimum of the numbers $\{(h_j + 1)/(2 k_j)\}_{j=1}^d$ and $m$ is the number of indices $j$ which assume the minimum value. For example, in the $d = 2$ case, the general theory implies
\be 
\int_{[0, 1]^2} e^{-n \xi_1^2 \xi_2^2 } d\xi \asymp \frac{ C \log n}{\sqrt{n}}, \quad \int_{[0, 1]^2} e^{-n \xi_1^2 \xi_2^4 } d\xi \asymp \frac{C}{n^{1/4}}, 
\ee
since in the first case, $\mr{k} = (1, 1)^\T$ and $\mr{h} = (0, 0)^\T$, implying $\lambda = \min\{1/2, 1/2\} = 1/2$ with multiplicity $m = 2$; while in the second one, $\mr{k} = (1, 2)^\T$ and $\mr{h} = (0, 0)^\T$, implying $\lambda = \min\{1/2, 1/4\} = 1/4$ and $m = 1$.

\begin{figure}[htbp!]
    \centering
    \includegraphics[scale=0.48]{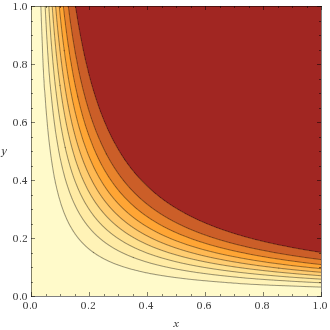}
      \includegraphics[scale=0.48]{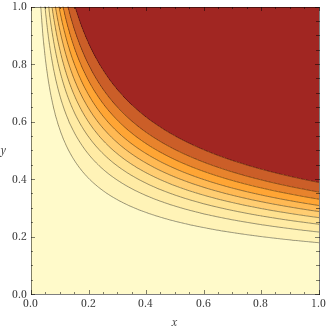}
    \caption{Contour plot of $\exp(-n x^2 y^2)$ (left) and $\exp(-n x^2 y^4)$ (right) on $[0, 1]^2$ for $n = 100$}
    \label{fig:ex12}
\end{figure}

As a concrete statistical example that we shall repeatedly return to in this article, we consider Example 27 in \cite{watanabe2018mathematical} pertaining to a single-layer neural network model, where response and covariate pair $(y, x) \in \mb R \times [0, 1]$ have a joint density modeled as $p(y, x \mid \theta) = p(y \mid x, \theta) \, p(x) = \m N(y; \theta_1 \tanh(\theta_2 x), 1) \, \ind_{[0, 1]}(x)$, with $\theta = (\theta_1, \theta_2)^\T \in [0, 1]^2$. Suppose the true parameter is $(0, 0)^\T$, and assume a uniform prior on $\theta$. Watanabe shows that 
\be 
K(\theta_1, \theta_2) = \frac{\theta_1^2 \theta_2^2}{2} \, K_0(\theta_2), \quad K_0(t) = \int_0^1 \frac{\tanh^2(tx)}{t^2} \, dx. 
\ee
Moreover, under the transformation, 
\be \label{eq:trans_neunet}
\xi_1 = \theta_1, \quad \xi_2 = \theta_2(K_{0}(\theta_2)\slash2)^{1\slash 2}:\,=g(\theta_2),
\ee
the model reduces to the normal form with $(k_1, k_2) = (1, 1)$ and $(h_1, h_2) = (0, 0)$, implying $\lambda = 1/2$. While a single resolution map suffices in this example, this, however, may not be the case in general.

Based on the above discussion, we work under the assumption that $\Omega = [0, 1]^d$, $K(\xi) = \xi^{2 \mr{k}}$, and $\varphi(\xi) = b(\xi) \, \xi^{\mr{h}}$ for the rest of the article. We now proceed to derive non-asymptotic bounds to the integrals in Eq.\,\eqref{eq:marglik} using probabilistic arguments. We first analyze the non-stochastic quantity $\m Z_K(n)$ in \S\,\ref{ssec:prob} and treat $\m Z(n)$ in \S\,\ref{ssec:2ndorder}.  

\subsection{The deterministic quantity $\m Z_K(n)$}\label{ssec:prob}

In this subsection, we take up the analysis of the non-stochastic quantity $\m Z_K(n)$. Watanabe used a number of powerful complex analytic tools to study the asymptotic behavior of $\m Z_K(n)$ as $n \to \infty$. The asymptotic behavior of $\m Z_K(n)$ is dictated by the Laurent expansion of the associated complex-valued {\em zeta function}
\be 
\zetaK(z) = \int K(\xi)^{-z} \, \varphi(\xi) d\xi, \quad z \in \mb C. 
\ee
In particular, if $(\lambda, m)$ is the smallest pole and its multiplicity of the meromorphic function $\zetaK$, then 
\be 
\m Z_K(n) \approx C n^{-\lambda} \, (\log n)^m. 
\ee
Chapter 3 of \cite{lin2011algebraic} contains an exposition on the Laurent expansion of $\zetaK$. Alternatively, one may recognize $\m Z_k(n)$ as the Laplace transform of a quantity called the state-density function, which is a generalized function in the parlance of Schwartz distribution theory. The state-density function and the zeta function are inter-related, with the zeta function being the Mellin transform of the state-density function. See Chapter 4 of \cite{watanabe2009algebraic} and Chapter 5 of \cite{watanabe2018mathematical} for a derivation of the asymptotics of $\m Z_K(n)$ based on the state-density function. 

Our goal here is to provide a non-asymptotic two-sided bound to $\m Z_K(n)$ for normal forms based entirely on basic probabilistic arguments. Interestingly, the quantities $\lambda$ and $m$ turn out to be related to the rate and shape parameters of a collection of gamma densities, as we shall see below. In the first result, we assume $b(\xi) = 1$ and treat the general case as a corollary. 

\begin{thm}\label{thm:rclt_nform}
Let $K(\xi) = \xi^{2 \mr{k}}$ for $\xi \in \Omega = [0, 1]^d$ and $\mr{k} = (k_1, \ldots, k_d)^\T \in \mb N^d$ with at least one positive entry, and let $\varphi(\cdot)$ be a probability density on $\Omega$ with $\varphi(\xi) \,\propto\, \xi^{\mr{h}}$, where $\mr{h} = (h_1, \ldots, h_d)^\T \in (0, \infty)^d$. Then, there exists positive constants $C_1$ and $C_2$ independent of $n$ such that 
\be 
C_1 \frac{(\log n)^{m-1}}{n^{\lambda}} < \m Z_K(n) < C_2 \frac{(\log n)^{m-1}}{n^{\lambda}},
\ee
where 
$$ 
\lambda = \min_{j} \frac{h_j + 1}{2 k_j}, \quad m = \sum_{j=1}^d \ind\bigg(\frac{h_j + 1}{2 k_j} = \lambda\bigg).
$$
\end{thm}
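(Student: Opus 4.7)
The plan is to perform two explicit changes of variable that recast $\mathcal{Z}_K(n)$ as a Laplace-type expectation involving a sum of independent exponential random variables, and then read off the claimed rate from classical Gamma tail estimates. First, substitute $u_j = \xi_j^{2 k_j}$, so that $\xi_j^{h_j}\, d\xi_j = (2k_j)^{-1}\, u_j^{\alpha_j - 1}\, du_j$ with $\alpha_j := (h_j + 1)/(2 k_j)$; in this notation $\lambda = \min_j \alpha_j$ and $m = \#\{j : \alpha_j = \lambda\}$. Up to a positive constant depending only on $(k_j,h_j)$, the problem reduces to bounding
\[
  I(n) \;=\; \int_{[0,1]^d} \exp\!\Bigl(-n \prod_{j=1}^d u_j\Bigr)\,\prod_{j=1}^d u_j^{\alpha_j - 1}\, du_j.
\]
Second, substitute $u_j = e^{-s_j}$, $s_j \in [0, \infty)$, which converts $u_j^{\alpha_j - 1}\, du_j$ into $e^{-\alpha_j s_j}\, ds_j$ and $\prod_j u_j$ into $e^{-W}$ where $W := s_1 + \cdots + s_d$. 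Factoring out $\prod_j \alpha_j^{-1}$ and recognizing the remaining factor as a product of $\mathrm{Exp}(\alpha_j)$ densities, the integral becomes a positive constant multiple of $\mathbb{E}\bigl[\exp(-n e^{-W})\bigr]$ where $W = S_1 + \cdots + S_d$ for independent $S_j \sim \mathrm{Exp}(\alpha_j)$.

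The remaining task is to estimate this expectation. Split $W = W_J + W_{J^c}$, where $J = \{j : \alpha_j = \lambda\}$, $W_J \sim \mathrm{Gamma}(m, \lambda)$, and $W_{J^c}$ is an independent sum of exponentials with rates strictly exceeding $\lambda$, so that $\mathbb{E}[e^{\lambda W_{J^c}}] < \infty$. Conditioning on $W_{J^c}$ and using the closed form $\mathbb{P}(W_J > w) = e^{-\lambda w} \sum_{k=0}^{m-1} (\lambda w)^k / k!$ yields two-sided bounds $c_- w^{m-1} e^{-\lambda w} \le \mathbb{P}(W > w) \le c_+ w^{m-1} e^{-\lambda w}$ valid for all $w \ge w_0$. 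Setting $Y = e^{-W} \in (0, 1]$ and substituting $y = t/n$ in $\mathbb{E}[e^{-nY}] = \int_0^1 e^{-ny} f_Y(y)\, dy$ reduces the expectation to a constant multiple of
\[
  n^{-\lambda} \int_0^n e^{-t}\, t^{\lambda - 1}\, (\log n - \log t)^{m-1}\, dt,
\]
which is sandwiched between constant multiples of $n^{-\lambda} (\log n)^{m-1}$ by isolating the contribution from $t \in [1, \sqrt{n}]$ (where $\log n - \log t \asymp \log n$) for the lower bound, and by crudely bounding $(\log n - \log t)^{m-1}$ using $(\log n)^{m-1} + (\log 1/t)^{m-1}$ (both of whose integrals against $e^{-t} t^{\lambda - 1}$ are finite) for the upper bound.

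The main technical nuisance, rather than a genuine obstacle, is squeezing out the sharp $w^{m-1}$ factor in both directions. A naive Chernoff bound on $W$ would be off by one polynomial power, so the sharp upper tail requires the exact Gamma computation together with the finiteness of $\mathbb{E}[e^{\lambda W_{J^c}}]$, while the matching lower bound requires $\mathbb{P}(W_{J^c} \le M) > 0$ for some fixed $M$ so that the Gamma-tail geometry of $W_J$ dominates. Both directions use the strict inequality $\alpha_j > \lambda$ for $j \notin J$ in an essential way, which is precisely how the multiplicity $m$ enters the rate through the factor $(\log n)^{m-1}$.
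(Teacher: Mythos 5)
Your proposal is essentially correct and reaches the paper's conclusion by a genuinely different technical route in the middle of the argument. Both arguments share the same opening reduction: recast $\m Z_K(n)$ as $\bbE[e^{-nT}]$ with $T = K(\xi)$ for $\xi \sim \varphi$, and exploit the fact that $W = -\log T$ is a sum of \emph{independent} $\mathrm{Exp}(\lambda_j)$ random variables with $\lambda_j = (h_j+1)/(2k_j)$. The divergence is in how one estimates the tail of $W$. The paper (Lemma~\ref{lem:stoc_order}) constructs auxiliary variables $Z_\ell \sim \mathrm{Gamma}(m,\lambda)$ and $Z_u = Z_\ell + Z_c$ with $Z_\ell <_\st Z <_\st Z_u$, then pushes the stochastic ordering through the monotone functional $\bbE[e^{-nT}]$ to obtain a sandwich without ever touching the density of $W$. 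You instead split $W = W_J + W_{J^c}$, condition on $W_{J^c}$, and combine the exact Erlang survival function for $W_J$ with the finite exponential moment $\bbE[e^{\lambda W_{J^c}}] = \prod_{j \notin J} \lambda_j/(\lambda_j - \lambda) < \infty$ (this finiteness is exactly where the strict inequality $\lambda_j > \lambda$ for $j \notin J$ is used, as you correctly note). Your route is arguably more elementary, avoiding the stochastic-ordering lemma, and makes the mechanism for the $(\log n)^{m-1}$ factor transparent: the conditioning collapses the non-minimizing coordinates into a bounded multiplicative factor while the Gamma kernel supplies the polynomial correction. Two small points to tighten: (i) the proposal writes the final reduction as $\int_0^1 e^{-ny} f_Y(y)\,dy$, which presumes \emph{density} bounds $f_W(w) \asymp w^{m-1}e^{-\lambda w}$ rather than the \emph{tail} bounds you actually stated; the conditioning argument does deliver density bounds (write $f_W(w) = \frac{\lambda^m}{\Gamma(m)} e^{-\lambda w}\int_0^w (w-v)^{m-1}e^{\lambda v}f_{W_{J^c}}(v)\,dv$, bound above by $w^{m-1}\bbE[e^{\lambda W_{J^c}}]$, below by restricting $v \le w/2$), but the paper sidesteps this by using the layer-cake identity $\bbE[e^{-nT}] = \int_0^1 \bbP(T < \log(1/u)/n)\,du$ which needs only tail probabilities — worth switching to that representation for cleanliness; (ii) the substitution $u_j = \xi_j^{2k_j}$ and the definition $\alpha_j = (h_j+1)/(2k_j)$ are undefined when $k_j = 0$, and the theorem allows zero entries in $\mathrm{k}$; you should note explicitly, as the paper does via $\bar d$, that coordinates with $k_j = 0$ integrate to a constant and drop out of the analysis.
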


\begin{proof}
The main idea behind our proof is to exploit the natural representation of $\m Z_K(n)$ as the expectation of a random variable with respect to the prior measure. Specifically, let $T = K(\xi)$, where $\xi \sim \varphi$ is a random variable distributed according to the prior measure. Then, it immediately follows that the real random variable $T$ takes values in the unit interval $[0, 1]$ and $\m Z_K(n) = E e^{-n T}$. Before proceeding to simplify this expectation, we note some conventions and notation. Let $\bar{d} = \sum_{j=1}^d \ind(k_j \ne 0)$, and without loss of generality assume that $k_j > 0$ for $j = 1, \ldots, \bar{d}$ and $k_j = 0$ for $j > \bar{d}$. Define $\lambda_j :\, = (h_j + 1)/(2 k_j)$ for $j = 1, \ldots, \bar{d}$, and without loss of generality, further assume that these are sorted in non-decreasing order $\lambda_1 \le \lambda_2 \ldots \le \lambda_{\bar{d}}$. By definition, $\bar{d} \ge m$, and the first $m$ of the $\lambda_j$s all equal $\lambda$. Throughout, we use the convention that an $\mbox{Expo}(\beta)$ distribution has density $\beta e^{-\beta x} \ind_{(0, \infty)}(x)$, that is, $\beta$ denotes the rate parameter of the distribution.

The random variable $Z :\,= -\log T$ can be expressed as $Z = \sum_{j = 1}^{\bar{d}} Z_j$ with $Z_j = - \log (\xi_j^{2 k_j})$ for $j = 1, \ldots, \bar{d}$. An application of the change of measure formula yields that $Z_j \sim \mbox{Expo}(\lambda_j)$ with $\lambda_j = (h_j + 1)/(2 k_j)$ as defined above; interestingly, observe the quantities $(h_j + 1)/(2 k_j)$s appear as the exponential rate parameters. Moreover, since the prior measure $\varphi$ has a product form, the $Z_j$s are independent across $j$. Letting $\Phi_K(\cdot)$ denote the cumulative distribution function of $T$, we then have, for any $t \in (0, 1)$, 
\be\label{eq:T_cdf}
\Phi_K(t) = P(T \le t) = P(- \log T \ge -\log t) = P\bigg(\sum_{j=1}^{\bar{d}} Z_j \ge \log(1/t)\bigg). 
\ee
It follows from the above display that $\lim_{t \downarrow 0} \Phi_K(t) = 0$, $\lim_{t \uparrow 1} \Phi_K(t) = 1$, and $\Phi_K$ is an absolutely continuous cdf that admits a density $\varphi_K(\cdot)$ with respect to the Lebesgue measure, given by, 
\be\label{eq:T_pdf}
\varphi_K(t) = \frac{1}{t} \, g_Z\big( \log(1/t) \big) \, \ind_{(0,1)}(t),  
\ee
where $g_Z$ is the density of $Z$ with respect to the Lebesgue measure. Our object of interest, 
\be\label{eq:Zkn_expr}
\m Z_K(n) = \int_{0}^1 e^{-nt} \varphi_K(t) \, dt = \int_0^n e^{-t} \, \frac{1}{t} \, g_Z\big( \log(n/t) \big) \, dt.
\ee
Before proceeding to prove the theorem in its entire generality, we consider two special cases which are instructive in themselves and also help build towards the general proof.  

\

First, consider the special case where $\lambda_j = \lambda$ for all $j = 1, \ldots, \bar{d}$. Then, $m = \bar{d}$ and $Z \sim \mbox{Gamma}(m, \lambda)$, where a $\mbox{Gamma}(\alpha, \beta)$ distribution has density $\big( \beta^{\alpha}/\Gamma(\alpha) \big) \, e^{-\beta x} x^{\alpha -1} \ind_{(0, \infty)}(x)$. It follows that for any $t \in (0, n)$, 
\be 
g_Z \big( \log(n/t) \big) = \frac{\lambda^m}{\Gamma(m)} \, n^{-\lambda} \, t^{\lambda} \big( \log(n/t) \big)^{m-1}.
\ee
Substituting in equation \eqref{eq:Zkn_expr}, we obtain that 
\be\label{eq:spcase1}
\m Z_K(n) = C n^{-\lambda} \, \underbrace{ \int_0^n t^{\lambda-1} \, e^{-t} \, \big( \log(n/t) \big)^{m-1} \, dt }_{\m I_m(n)} \asymp n^{-\lambda} \, (\log n)^{m - 1}.
\ee
The proof of the assertion that $\m I_m(n) \asymp (\log n)^{m - 1}$ for any $m \ge 1$ is straightforward and hence omitted. This completes the proof for this particular case. 

\

As a second special case, suppose $\lambda_1 < \ldots < \lambda_{\bar{d}}$, which implies that $\lambda = \lambda_1$ and $m = 1$. The distribution of $Z$ isn't recognizable as a standard density any longer, although an analytic expression for its density is available in the literature as quoted below. 
\begin{thm}[\citep{mathai1982storage,bibinger2013notes}]\label{thm:sumexp}
Let $Z_k \stackrel{ind.} \sim \mbox{Expo}(\lambda_k)$ for $k = 1, \ldots, K$, with $\lambda_1 < \ldots < \lambda_K$. Then, the density $g_Z$ of $Z = \sum_{k=1}^K Z_k$ is 
\be
g_Z(z) = \sum_{k=1}^K \, \underbrace{\bigg( \prod_{r \ne k} \frac{\lambda_r}{\lambda_r - \lambda_k} \bigg)}_{b_k} \, g_k(z),
\ee
where $g_k(z) = \lambda_k e^{-\lambda_k z}$ is the density of $Z_k$. 
\end{thm}
The coefficients $\{b_k\}$ can be both positive and negative, and thus the above is not a mixture of exponential densities. However, the coefficient $b_1$ corresponding to the smallest rate parameter $\lambda_1$ is positive. We have, for any $t \in (0, n)$, 
\be 
g_Z \big( \log(n/t) \big) = \sum_{j=1}^{\bar{d}} b_j \, \lambda_j \, n^{-\lambda_j} \, t^{\lambda_j}.
\ee
Substituting this expression in equation \eqref{eq:Zkn_expr}, we get
\be\label{eq:spcase2}
\m Z_K(n) = \sum_{j=1}^{\bar{d}} b_j \, \lambda_j \, n^{-\lambda_j} \int_0^n e^{-t} t^{\lambda_j - 1} dt \asymp \sum_{j=1}^{\bar{d}} b_j n^{-\lambda_j} \asymp n^{-\lambda_1}.
\ee
This proves the theorem for this special case. The fact that $b_1 > 0$ has been crucially used to arrive at the last conclusion in the above display, along with the fact that $n^{-\lambda} > n^{-\lambda'}$ for $\lambda' > \lambda > 0$. This example carries the takeaway message that the exact form of the density $g_Z$ is of secondary importance, and the focus should be on extracting the most significant contribution in terms of $n$. This is our strategy for the most general case. 
 
\

In the general case, assume that there are $d^\ast \le \bar{d}$ unique $\lambda$-values $\lambda_1^\ast < \lambda_2^\ast \ldots < \lambda_{d^\ast}^\ast$ among $\{\lambda_j\}_{j=1}^{\bar{d}}$ with corresponding multiplicities $m_1, \ldots, m_{d^\ast}$. It is then immediate that $\sum_{s=1}^{d^\ast} m_s = \bar{d}$. Also, $(\lambda_1^\ast, m_1) = (\lambda, m)$ from the theorem statement. Exploiting the independence of the $Z_j$s, we write $Z = \sum_{s=1}^{d^\ast} W_s$, with $W_s \stackrel{ind.} \sim \mbox{Gamma}(m_s, \lambda_s^\ast)$ for $s = 1, \ldots, d^{\ast}$. While there exist expressions for the density of sum of independent Gamma random variables \citep{mathai1982storage}, they are much more cumbersome than the simpler case of exponentials in Theorem \ref{thm:sumexp}. Hence, we do not attempt to work with the density $g_Z$ and instead aim to bound $\m Z_K(n)$ from both sides. To that end, we crucially use the idea of stochastic ordering of random variables. 

Recall that for real random variables $X_1, X_2$, $X_1$ is said to be stochastically smaller than $X_2$ if for every $x \in \mb R$, $P(X_2 > x) \ge P(X_1 > x)$. We use the notation $X_1 <_{\st} X_2$ to denote this stochastic ordering. We now record a useful result. 
\begin{lemma}\label{lem:stoc_order}
Consider the random variable $Z = \sum_{s=1}^{d^\ast} W_s$, with $W_s \stackrel{ind.} \sim \mathrm{Gamma}(m_s, \lambda_s^\ast)$. Assume $\lambda_1^\ast < \ldots < \lambda_{d^\ast}^\ast$ and let $\bar{d} = \sum_{s=1}^{d^\ast} m_s$. Define $Z_\ell = W_1$ and $Z_c = \sum_{s=2}^{d^\ast} \wt{W}_s$, where $\wt{W}_s \stackrel{ind.} \sim \mathrm{Gamma}(m_s, \lambda_2^\ast)$ are also independent of $W_1$. Then, $Z_\ell \sim \mathrm{Gamma}(m_1, \lambda_1^\ast)$, $Z_c \sim \mathrm{Gamma}(\bar{d} - m_2, \lambda_2^\ast)$, $Z_\ell$ and $Z_c$ are independent, and with $Z_u :\, = Z_\ell + Z_c$, 
\be 
Z_\ell <_{\st} Z <_{\st} Z_u. 
\ee
\end{lemma}
With this result in place, we now aim to bound $\m Z_K(n) = E e^{-n T}$. Since $e^{-n T}$ is a non-negative random variable taking values in $(0, 1)$, we have 
\be 
\begin{aligned}\label{eq:id}
E e^{-n T} 
& = \int_{u=0}^1 P\big(e^{-nT} > u\big) du \\
& = \int_{u=0}^1 P\big(T < \log(1/u)/n\big) du = \int_{u=0}^1 P\big(Z > \log n - \log(\log 1/u)\big) du. 
\end{aligned}
\ee
For any $z > 0$, we have the following two-sided inequality from Lemma \ref{lem:stoc_order},
\be 
P(Z_\ell > z) < P(Z > z) < P(Z_u > z) < P(Z_\ell > z) + P(Z_c > z). 
\ee
Here, the last inequality follows from an application of the union bound. Substituting this inequality at the end of equation \eqref{eq:id} for every $u$ and working backwards, we obtain
\be 
E e^{- n T_\ell} < E e^{-n T} < E e^{-n T_\ell} + E e^{-n T_c},
\ee
where $T_\ell = e^{- Z_\ell}$ and $T_c = e^{-Z_c}$. Since $Z_\ell$ and $Z_c$ are both gamma random variables, it follows from equation \eqref{eq:spcase1} that $E e^{- n T_\ell} > C n^{-\lambda} (\log n)^{m-1}$ and $E e^{- n T_\ell} + E e^{-n T_c} < C_1 n^{-\lambda} (\log n)^{m-1} + C_2 n^{-\lambda_2} (\log n)^{m_2-1} < C_3 n^{-\lambda} (\log n)^{m-1}$. This delivers the desired bound.

\end{proof}

We now state a corollary to Theorem \ref{thm:rclt_nform} relaxing the assumption on the prior
\begin{corollary}\label{cor:genprior}
Assume the setup of Theorem \ref{thm:rclt_nform}.  Let $b: U \to \mb R$ be an analytic function with $b(\mathbf{0}) \ne 0$, where $U$ is any open subset of $\mb R^d$ containing $\Omega$. Then, 
\be 
\int_{\Omega} b(\xi) \, e^{-n K(\xi)} \, \varphi(\xi) \, d\xi \asymp n^{-\lambda} (\log n)^{m-1}.
\ee
\end{corollary}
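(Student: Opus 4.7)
The plan is to reduce to Theorem \ref{thm:rclt_nform} by sandwiching $b$ between positive constants on a neighborhood of the origin, where it is well-approximated by $b(\mathbf{0})$, and treating the complement as a lower-order tail. Since $b(\mathbf{0}) \ne 0$, after possibly replacing $b$ by $-b$ we may assume $b(\mathbf{0}) > 0$; in the paper's Hironaka-motivated setting $b$ is in fact positive on $\Omega$, which we take as the operative setting. Because $b$ is analytic (hence continuous) on the compact set $\Omega$, we may choose $\delta \in (0,1]$ so small that $b(\xi) \in [b(\mathbf{0})/2,\, 2 b(\mathbf{0})]$ for every $\xi \in [0,\delta]^d$, and set $b_\pm$ to be the sup/inf of $b$ on $\Omega$ (both finite and positive under the operative assumption).

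The most direct route, when $b > 0$ throughout $\Omega$, is the global sandwich
\[
b_- \, \m Z_K(n) \;\leq\; \int_\Omega b(\xi)\, e^{-n K(\xi)}\varphi(\xi)\, d\xi \;\leq\; b_+ \, \m Z_K(n),
\]
which combined with Theorem \ref{thm:rclt_nform} yields the claim at once. Under the weaker hypothesis that only $b(\mathbf{0}) \neq 0$ is guaranteed, I would split $I = I_1 + I_2$ with $I_1$ the integral over the core $[0,\delta]^d$ and $I_2$ the remainder. On the core, the sandwich on $b$ gives $\tfrac{b(\mathbf{0})}{2} J(\delta) \leq I_1 \leq 2 b(\mathbf{0}) J(\delta)$, where $J(\delta) := \int_{[0,\delta]^d} e^{-n \xi^{2\mr k}} \xi^{\mr h}\, d\xi$. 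The substitution $\xi = \delta u$ converts $J(\delta)$ into $\delta^{d + |\mr h|} \int_{[0,1]^d} e^{-\tilde n\, u^{2\mr k}} u^{\mr h}\, du$ with $\tilde n := n \delta^{2|\mr k|}$, which is exactly the integral covered by Theorem \ref{thm:rclt_nform}. Since $\delta$ is a fixed constant, $\tilde n^{-\lambda} (\log \tilde n)^{m-1} \asymp n^{-\lambda}(\log n)^{m-1}$, so $I_1 \asymp n^{-\lambda}(\log n)^{m-1}$.

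The tail $I_2$ is treated crudely: $|I_2| \leq b_+ \m Z_K(n) \leq C n^{-\lambda}(\log n)^{m-1}$ by Theorem \ref{thm:rclt_nform}, which combined with the upper bound on $I_1$ yields the upper estimate on $I$. The matching lower bound follows from $I \geq I_1$ whenever $b \geq 0$ on $\Omega$. The main obstacle is precisely this lower bound: the crude tail estimate on $|I_2|$ is of the same order as $I_1$, so in the fully general case where $b$ might change sign one cannot conclude by simple subtraction. Closing this gap would require a finer slab decomposition of $\Omega \setminus [0,\delta]^d$ into pieces $\{\xi_j \geq \delta\}$ and using $\xi_j^{2 k_j} \geq \delta^{2 k_j}$ on slabs with $k_j > 0$ to reduce to a strictly lower-dimensional integral of the same form (with a strictly larger RLCT in most cases, hence of genuinely smaller order); however, in the paper's setting this refinement is not needed because $b$ inherits positivity from the Hironaka resolution and the straightforward global sandwich closes the argument.
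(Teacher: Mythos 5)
Your approach is genuinely different from the paper's. The paper expands $b$ in its power series around $\mathbf{0}$, interchanges sum and integral via dominated convergence, and observes that the constant term $b(\mathbf 0)\, \m Z_K(n)$ dominates: each higher-order term $\int_\Omega \xi^\alpha e^{-nK(\xi)}\varphi(\xi)\,d\xi$ is itself a normal-form integral with multi-index $\mr h + \alpha$ in place of $\mr h$, and since $|\alpha|\ge 1$ this yields a strictly smaller $n$-order (either a larger RLCT or, when the RLCT ties at $\lambda$, a strictly smaller multiplicity). Your global sandwich $b_-\,\m Z_K(n) \le \int_\Omega b\, e^{-nK}\varphi \le b_+\,\m Z_K(n)$ is more elementary and closes the argument immediately whenever $b>0$ on $\Omega$, which is indeed the operative regime here: the normal-form setup fixed just before Theorem \ref{thm:rclt_nform} assumes $b(\cdot)>0$, and Hironaka's theorem delivers exactly such a positive $b$. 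The trade-off is the one you flag yourself: the paper's power-series route is designed to cover the literally weaker stated hypothesis $b(\mathbf 0)\ne 0$, whereas your core-plus-tail split does not close the sign-changing case, since the crude tail estimate is of the same order $n^{-\lambda}(\log n)^{m-1}$ as the core. Note also that the slab refinement you sketch does not always produce a \emph{strictly} smaller order: for instance when $m=1$ with $\lambda_1=\lambda<\lambda_2$, the slab $\{\xi_j\ge\delta\}$ for $j\ne 1$ (with $k_j>0$) still contributes order $n^{-\lambda}$, matching the core, so one would need a finer expansion (essentially the paper's power-series argument) to resolve the competing contributions. Your honest acknowledgement that the positivity of $b$ suffices in the relevant setting is well taken, and your sandwich argument is a clean, more elementary replacement for the paper's proof under that assumption.
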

Corollary \ref{cor:genprior} shows that the assumption of a product prior in Theorem \ref{thm:rclt_nform} can be relaxed to more general priors of the form $\wt{\varphi}(\xi) \,\propto\, b(\xi) \varphi(\xi)$, with the same asymptotic order of the normalizing constant as before.

\subsection{The stochastic quantity $\m Z(n)$}\label{ssec:2ndorder}
We now extend our probabilistic analysis from the previous subsection to analyze the stochastic quantity $\m Z(n)$. Write
\begin{eqnarray*}
\m Z(n) = \int_\Omega e^{-n K(\xi) - \sqrt{n} \, \xi^{\mathrm{k}} W_n(\xi)} \varphi(\xi) d\xi,
\end{eqnarray*}
where 
\be \label{eq:W_n}
W_n(\xi) = \frac{1}{\sqrt{n}} \, \sum_{i=1}^n \xi^{-\mathrm{k}} \, Z_i(\xi).  
\ee
We make some simplifying assumptions to keep the presentation from getting notationally too heavy. We shall assume $\varphi(\xi) \,\propto\, \xi^{\mathrm{h}}$, and also that $\lambda_j = \lambda$ for all $j = 1, \ldots, \bar{d} < d$. Recall from \S\,\ref{ssec:prob} that $m = \bar{d}$ in this case. Denote by $I$ the set  $\{1, \ldots, m\}$ and $J= \{m+1, \ldots, d\}$.  Clearly, under $\varphi(\cdot)$, the common distribution of the independent random variables  $\xi_j$ for $j \in I$ is $\mbox{Beta}(h_j +1, 1)$ and hence $\xi_j^{2k_j}$ is $\mbox{Beta}(\lambda, 1)$ distributed for $j \in I$.

We now state a stochastic approximation result for $\m Z(n)$ in Theorem \ref{prop:sec_order} that can be considered as a non-asymptotic version of Theorem 11 in \cite{watanabe2018mathematical}. The main idea lies in decoupling the effect of the singular part $\xi_I$ controlled by $K(\xi)$ from the non-singular $\xi_J$ part of $\xi$.  As we shall see in Theorem \ref{prop:sec_order}, our proof relies heavily on the properties of the conditional density of  $\xi_I$ given $K(\xi)$. 

Since the distribution of $Z = -\log T = -\sum_{j=1}^m 2k_j \log \xi_j$ is $\mbox{Gamma}(m, \lambda)$, the conditional distribution  $(- 2k_1\log \xi_1, \ldots, -2k_m\log\xi_m) := (Z_1, \ldots, Z_m) \mid Z$ is given by $Z \times \mbox{Dirichlet}(\mathbf{1}_m)$ and is expressed as 
\begin{eqnarray*}
f_{Z_1, \ldots, Z_m \mid Z}(z_1, \ldots, z_m) = \frac{\Gamma(m)}{Z^{m-1}}, \quad 0 \leq \sum_{i=1}^{m-1} z_i \leq Z, \quad z_m = Z-  \sum_{i=1}^{m-1} z_i. 
\end{eqnarray*}
Hence, the conditional density of $\xi \mid Z = (e^{-Z_1/(2k_1)}, \ldots, e^{-Z_m/(2k_m)})\mid  Z$ is given by 
{\small \begin{eqnarray}\label{eq:cond}
\varphi_{\xi \mid Z}(\xi_I) = \frac{2^m\prod_{j=1}^m k_j}{\prod_{j=1}^m \xi_j}  \frac{\Gamma(m)}{Z^{m-1}}, \quad e^{-Z}\leq (\xi_{I_{-m}})^{2k_{-m}} \leq 1, \quad   \xi_I^{2k} = Z,
\end{eqnarray}}
where $I_{-m} = I \backslash \{m\}, k_{-m} = k \backslash \{k_m \}$. Also, $\varphi_{\xi \mid Z=z}(\xi_I)$ is the same as the density $\varphi_{\xi \mid T= e^{-z}}(\xi_I)$.  Define a sequence of stochastic processes $D_n(t, \xi)$ with index set $\mathbb{R}^+ \times [0, 1]^d$ as 
\be
D_n(t, \xi) =  t^{\lambda -1} e^{- t- \sqrt{t} W_n(\xi)} \varphi_{\xi \mid Z= -\log (t/n)} (\xi_I) \varphi(\xi_J). 
\ee
Further, define an integrated version of $D_n(t, \xi)$ as $D_n(t) = \int_\Omega D_n(t, \xi) d\xi$ for $t \in \mb R^+$.  
\begin{thm} \label{prop:sec_order}
The following expression provides a non-asymptotic stochastic expansion of $\m Z(n) $ with $n^{-\lambda} (\log n)^{m-1}$ as the leading term,  
\be
\m Z(n) \prod_{j=1}^d(h_j+1) =  \frac{n^{-\lambda} (\log n)^{m-1} \lambda^m}{\Gamma(m)}  \int_0^n  D_n(t)dt+ R_n
\ee
where 
\be
R_n =  \int_{t=0}^n r_n(t) D_n(t) dt, \,  r_n(t)= \frac{\lambda^m}{\Gamma(m)} \,  n^{-\lambda} \, \sum_{j=1}^{m -2} {m-1 \choose j} \big( \log n \big)^j (- \log t)^{m-1-j}.
\ee

Moreover, the remainder term $R_n$ is smaller order in comparison with the dominating term. 
If the sequence of stochastic processes $W_n$ satisfied $\|W_n\|_\infty = O_p(1)$, then 
\be
\frac{|R_n|}{n^{-\lambda} (\log n)^{m-1}}\to 0, 
\ee 
 almost surely.

\end{thm}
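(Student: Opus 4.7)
The plan is to mirror the deterministic analysis of \S\,\ref{ssec:prob} while carrying along the stochastic factor $e^{-\sqrt{n}\xi^{\mr{k}}W_n(\xi)}$. Since $K(\xi)=\xi_I^{2\mr{k}_I}$ depends only on the singular coordinates $\xi_I$ and the prior factorizes across $(\xi_I,\xi_J)$, I first condition on $T=K(\xi_I)$. Under the standing hypothesis $\lambda_j=\lambda$ for every $j\in I$, the first special case in the proof of Theorem \ref{thm:rclt_nform} gives $Z=-\log T \sim \mr{Gamma}(m,\lambda)$, so $T$ has density $f_T(t)=\frac{\lambda^m}{\Gamma(m)}t^{\lambda-1}(-\log t)^{m-1}$ on $(0,1)$. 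Combining this with the explicit conditional density $\varphi_{\xi\mid Z}$ from Eq.\,\eqref{eq:cond} and applying the tower property yields a one-dimensional representation of $\m Z(n)$ against $f_T(t)$, weighted by the conditional expectation of the stochastic exponential. Rescaling $t\mapsto t/n$ extracts the $n^{-\lambda}$ prefactor and, after tracking the normalization constants $\prod_j(h_j+1)$, produces
\[
\m Z(n)\prod_{j=1}^d(h_j+1) = \frac{\lambda^m}{\Gamma(m)}\, n^{-\lambda} \int_0^n (\log n - \log t)^{m-1}\, D_n(t)\, dt.
\]
Binomially expanding $(\log n - \log t)^{m-1}=\sum_{j=0}^{m-1}\binom{m-1}{j}(\log n)^j(-\log t)^{m-1-j}$ isolates the $j=m-1$ summand as the claimed leading term $\frac{\lambda^m}{\Gamma(m)}n^{-\lambda}(\log n)^{m-1}\int_0^n D_n(t)dt$, while the $j\le m-2$ terms reassemble into $\int_0^n r_n(t) D_n(t)\,dt = R_n$, completing the identity.

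To establish that $R_n$ is of smaller order, I work on the event $\Omega_{C,n}=\{\|W_n\|_\infty\le C\}$, which has probability at least $1-\epsilon$ uniformly in $n$ once $C$ is large by the hypothesis $\|W_n\|_\infty=O_p(1)$. On $\Omega_{C,n}$, the stochastic exponential satisfies $|e^{-\sqrt{t}W_n(\xi)}|\le e^{C\sqrt{t}}$; since $\varphi_{\xi\mid Z}(\xi_I)\varphi(\xi_J)$ is a probability measure for each fixed $t$ (it integrates to one once $\xi$ is marginalized out), this gives the deterministic pointwise bound $D_n(t)\le t^{\lambda-1}e^{-t+C\sqrt{t}}$. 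The model integrals $\int_0^\infty |\log t|^{m-1-j} t^{\lambda-1}e^{-t+C\sqrt{t}}\,dt$ are then finite for each $j\le m-2$: integrability near zero follows from $\lambda>0$, and at infinity from the dominant $-t$ in the exponent. Summing the bounds over $j$ yields $|R_n|\le C'\,n^{-\lambda}(\log n)^{m-2}$ on $\Omega_{C,n}$, and consequently $|R_n|/[n^{-\lambda}(\log n)^{m-1}] = O((\log n)^{-1}) \to 0$ as required.

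The principal obstacle is pushing the sup-norm bound on $W_n$ inside the conditional expectation against $\varphi_{\xi\mid Z}$. The density in Eq.\,\eqref{eq:cond} exhibits a $1/Z^{m-1}$ singularity as $Z\downarrow 0$ (i.e.\ $t\uparrow n$), but since it integrates to one for every fixed $Z$, the pointwise blow-up is innocuous once $W_n$ is uniformly dominated on $\Omega$, and the bound on $D_n$ survives. A secondary technical caveat is that the hypothesis $\|W_n\|_\infty=O_p(1)$ yields boundedness in probability rather than almost sure boundedness; the stated almost-sure conclusion is most naturally read on the sequence of high-probability events $\Omega_{C,n}$, which is equivalent to convergence in probability of the ratio $|R_n|/[n^{-\lambda}(\log n)^{m-1}]$, and upgrading to a genuine a.s.\ statement would require strengthening the hypothesis (e.g., almost sure boundedness of $\|W_n\|_\infty$ via Borel--Cantelli-type arguments).
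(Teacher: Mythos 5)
Your proposal is correct and follows essentially the same route as the paper's proof: condition on $T = K(\xi)$ using the $\mathrm{Gamma}(m, \lambda)$ law of $Z = -\log T$, substitute the explicit conditional density $\varphi_{\xi\mid Z}$, rescale $t \mapsto t/n$, binomially expand $(\log(n/t))^{m-1}$, and isolate the $j = m-1$ term as the dominant piece with the rest forming $R_n$; the second-part bound via $D_n(t) \le t^{\lambda-1}e^{-t+\sqrt{t}\|W_n\|_\infty}$ is likewise the paper's. Two remarks on details you implicitly corrected: your remainder sum runs over $j = 0, \ldots, m-2$, whereas the paper's displayed $r_n(t)$ starts at $j = 1$ and thus omits the $(-\log t)^{m-1}$ summand that the binomial identity requires (for $m = 2$ the paper's $r_n$ would erroneously vanish even though $\log(n/t) = \log n - \log t$); the $O((\log n)^{-1})$ order bound in the second part is unaffected since $j = 0$ carries no factor of $\log n$. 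You also rightly flag that the hypothesis $\|W_n\|_\infty = O_p(1)$ delivers convergence in probability of $|R_n| / [n^{-\lambda}(\log n)^{m-1}]$ rather than the almost-sure convergence asserted in the theorem statement, which would require a strengthened hypothesis such as almost-sure boundedness of $\|W_n\|_\infty$.
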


\begin{proof}
{\bf First part:} 
By abuse of notation we shall assume that $\varphi$ corresponds to a product Beta density $\prod_{j=1}^d \mbox{Beta}(\xi_j \mid h_j+1, 1)$.  Multiplying $Z(n)$ by $\prod_{j=1}^d(h_j+1)$ we have
\be
\m Z(n) \prod_{j=1}^d(h_j+1)  &= \int e^{-n K(\xi) - \sqrt{n} \xi^k W_n(\xi)} \varphi(\xi) d\xi \\
&= \int_0^1 e^{-nt} \Big[\int e^{-\sqrt{nt} W_n(\xi)} 
  \varphi_{\xi \mid T=t}(\xi_I) \varphi(\xi_J) d\xi \Big] \varphi_K(t) dt
\ee
where $\varphi_K(t)$ is the density of $T = K(\xi)$ as in \S \ref{ssec:prob}.
Substituting  $\varphi_K(t) = (1/t) g_Z\big\{ \log (1/t) \big\}$, where 
$g_Z(\cdot)$  is the pdf of a $\mbox{Gamma}(\lambda, m)$ random variable, we have by another change of variable, $nt \mapsto t$ that, 
\be
\m Z(n) \prod_{j=1}^d(h_j+1) 
&= \int_0^n e^{-t}  \frac{1}{t} g_Z\Big( \log \frac{n}{t}\Big) \Big[\int e^{-\sqrt{t} W_n(\xi)} 
  \varphi_{\xi \mid T=t/n}(\xi_I) \varphi(\xi_J) d\xi \Big] dt. 
\ee
Noting, 
\be 
g_Z \big( \log(n/t) \big) = \frac{\lambda^m}{\Gamma(m)} \, n^{-\lambda} \, t^{\lambda} \big( \log(n/t) \big)^{m-1} 
=  \frac{\lambda^m}{\Gamma(m)} \,  n^{-\lambda} \, t^{\lambda} \big( \log n \big)^{m-1} + r_n(t) t^{\lambda}
\ee
it follows
\be
\m Z(n) \prod_{j=1}^d(h_j+1) 
&=  \frac{\lambda^m}{\Gamma(m)} n^{-\lambda} \, \big( \log n \big)^{m-1} \int_0^n \int D_n(t, \xi) d\xi  dt + R_n. 
\ee

%
\noindent {\bf Second part:}  Since the maximum exponent of all logarithmic terms  inside the summation is $(m-2)$ we have 
\be
\frac{|r_n(t)|}{n^{-\lambda} (\log n)^{m-1}} \leq C(m) (\log n)^{-1} \sum_{j=1}^{m-2}|\log t|^{m-1-j}
\ee
for some constant $C(m)$ depending on $m$.  Also
 $D_n(t) \leq t^{\lambda-1} e^{-t + \sqrt{t} \| W_n\|_\infty}$ and hence 
 \be
\frac{|R_n|}{n^{-\lambda} (\log n)^{m-1}} \leq    \frac{C(m)}{\log n } \sum_{j=1}^{m-2} \int_{t=0}^\infty e^{-t +\sqrt{t} \| W_n\|_\infty} t^{\lambda-1} |\log t|^{m-1-j}  dt
\ee
Since the function $e^{-t +\sqrt{t} \| W_n\|_\infty} t^{\lambda-1} |\log t|^{m-1-j}$ is integrable and $\|W_n\|_\infty = O_p(1)$, the result follows immediately.

\end{proof}
\subsubsection{Connections with Watanabe's result:}
%
%
Watanabe \citep{watanabe2018mathematical} arrives at an asymptotic expansion of 
$\m Z(n)$ using a different technique. He refers to the inverse Laplace transform of the integrand $\exp(-nK(\xi)) \varphi(\xi)$ as the {\em state density function} given by  $\delta(t - \xi^{2k})  \varphi(\xi)$, where $\delta(\cdot)$ is the Dirac delta function in the sense of a generalized function in Schwartz distribution theory. Thereafter, 
\be
\exp(-nK_n(\xi)) \varphi(\xi) = \int_{t=0}^1 e^{-nt + \sqrt{nt} W_n(\xi)} \delta(t - \xi^{2k})  \varphi(\xi) dt
\ee
whence the marginal likelihood  $\m Z(n)$ becomes
\be 
\m Z(n)  = \int \int_{t=0}^1 e^{-nt  + \sqrt{nt} W_n(\xi)}  \delta(t - \xi^{2k}) dt  \varphi(\xi) d\xi := \int_{0}^n v(t) dt,
\ee
with $v(t) = \int e^{-t  + \sqrt{t} W_n(\xi)}  \delta(t/n- \xi^{2k}) d\xi$. 
Watanabe then distills $v(t)$ by passing it through a Mellin's transform $\int v(t) t^z dt$, discards the smaller order terms before transforming back to $\tilde{v}(t) \asymp n^{-\lambda} t^{\lambda -1} \{\log (n/t)\}^{m-1} e^{-t  + \sqrt{t} W_n(\xi)}$ using the inverse Mellin transform.  Substituting back into $\m Z(n)$,
\be 
\m Z(n) \asymp  \int_{0}^n \tilde{v}(t) dt 
\asymp n^{-\lambda} (\log n)^{m-1}\int_{0}^n t^{\lambda -1}e^{-t  + \sqrt{t} W_n(\xi)} dt. 
\ee

Our proof technique avoids the Mellin's transform and instead relies on simple probabilistic tools. The main differences are as follows:
\begin{itemize}
\item The use of Dirac delta function as a generalized function is avoided by taking the conditional distribution of $\xi$ with respect to $\xi^{2k}$ and multiplying with the marginal density of $\xi^{2k}$. 

\item The conditioning neutralizes the effect of the singular part along with the stochastic component, while the marginal density provides the overall order. 

\item No approximation is made during the process. In contrast, while taking the inverse Mellin's transform, smaller order terms are dropped. Hence our representation of $\m Z(n)$  in Theorem \ref{prop:sec_order}  is exact as opposed to Theorem 11 in \cite{watanabe2018mathematical}.
\end{itemize}

Now, we show that asymptotically as $n \to \infty$,  Theorem  \ref{prop:sec_order} recovers Theorem 11 in \cite{watanabe2018mathematical}.   An important ingredient  of making this connection is to show a weak convergence of the sequence of stochastic processes $W_n$.   The expected value and the covariance of $Z_i(\xi)$ are
$  \mathbb{E} Z_i(\xi)=0,   \mbox{cov}[Z_i(\xi), Z_i(\zeta)]:=c_z(\xi,\zeta) $, respectively.  
For $W_n(\xi)$, the same quantities are given by
\begin{align*}
    \mathbb{E} W_n(\xi)= \frac{\xi^{-k}}{\sqrt{n}}\sum_{i=1}^n\mathbb{E} Z_i(\xi)=0, \quad
    \mbox{cov}[W_n(\xi), W_n(\zeta)]= \frac{c_z(\xi,\zeta)}{\xi^k\zeta^k}:=c_w(\xi,\zeta). 
\end{align*}
Let $W^*$ denote a mean zero Gaussian process on $\Omega = [0, 1]^d$ with covariance kernel  $c_w$.  Under appropriate conditions on the stochastic processes $\{\tilde{Z}_i(\xi) = \xi^{-k}Z_i(\xi): \xi \in [0, 1]^d\}$, we show in Proposition \ref{prop:weak} that $W_n$ weakly converges to  $W^*$.  The proof requires sub-Gaussianity \citep{vershyninhigh} of $\tilde{Z}_i(\xi)$. \footnote{ A real valued random variable $X$ is called sub-Gaussian if there exists a constant $c_X > 0$ such that $\mb P(|X| > t) \leq 2 e^{-t^2/(2c_X)}, \quad t \geq 0$.}

\noindent {\bf Assumption A1:} 
Suppose that $\tilde{Z}_i(\xi)$ are iid sub-Gaussian. Furthermore suppose that there exists a positive function $L: \mathbb{R} \to \mathbb{R}^+$ with $\mb Ee^{t L(X_1)} \leq e^{t^2/(2c_L)}$ for every $t > 0$ and for some constant $c_L > 0$, such that 
\be
\vert \tilde{Z}_i(\xi)- \tilde{Z}_i(\xi')\vert \leq  L(X_i) \|\xi - \xi'\|. 
\ee
\begin{prop}\label{prop:weak}
 Under  Assumption {\bf A1}, 
 $W_n \stackrel{w}{\rightarrow}W^*$. 
\end{prop}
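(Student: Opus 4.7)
The plan is to establish weak convergence of $W_n$ to $W^*$ in the Banach space $C([0,1]^d)$ equipped with the supremum norm. I will follow the standard two-step recipe: first verify convergence of finite-dimensional distributions, and then prove tightness of the sequence $\{W_n\}$ via a moment bound on increments. Note that Assumption A1 ensures each $\tilde{Z}_i$ (and hence each $W_n$) is Lipschitz continuous on $[0,1]^d$, so the processes lie in $C([0,1]^d)$ to begin with.

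For the finite-dimensional distributions, fix any finite collection $\xi_1, \ldots, \xi_q \in [0,1]^d$. The random vectors $\bigl(\tilde{Z}_i(\xi_1), \ldots, \tilde{Z}_i(\xi_q)\bigr)$, $i = 1, \ldots, n$, are i.i.d. with mean zero. Sub-Gaussianity of each coordinate guarantees a finite covariance matrix $\Sigma \in \mathbb{R}^{q \times q}$ with entries $\Sigma_{jl} = c_w(\xi_j, \xi_l)$. The multivariate central limit theorem (or Cram\'er--Wold combined with the univariate CLT) then gives
\[
\bigl(W_n(\xi_1), \ldots, W_n(\xi_q)\bigr) \stackrel{d}{\to} \mathcal{N}_q(0, \Sigma),
\]
which matches the finite-dimensional distributions of the Gaussian process $W^*$.

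For tightness, the Lipschitz condition in Assumption A1 implies that the increment $\tilde{Z}_i(\xi) - \tilde{Z}_i(\xi')$ is mean-zero and bounded in absolute value by $L(X_i) \|\xi - \xi'\|$. Applying Rosenthal's inequality to the normalized sum $W_n(\xi) - W_n(\xi') = n^{-1/2} \sum_{i=1}^n \bigl[\tilde{Z}_i(\xi) - \tilde{Z}_i(\xi')\bigr]$ yields, for every integer $p \ge 1$,
\[
\mathbb{E}\bigl|W_n(\xi) - W_n(\xi')\bigr|^{2p} \le C_p \Bigl( n^{1-p}\, \mathbb{E}[L(X_1)^{2p}] + \bigl(\mathbb{E}[L(X_1)^2]\bigr)^{p} \Bigr) \|\xi - \xi'\|^{2p}.
\]
The sub-Gaussian bound on $L(X_1)$ guarantees $\mathbb{E}[L(X_1)^{2p}] < \infty$ for every $p$, so the right-hand side is of order $\|\xi - \xi'\|^{2p}$ uniformly in $n$. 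Choosing any $p > d/2$, the Kolmogorov--Chentsov criterion yields tightness of $\{W_n\}$ in $C([0,1]^d)$, and combining with finite-dimensional convergence completes the proof.

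The main obstacle is obtaining a modulus-of-continuity bound that is simultaneously uniform in $n$ and carries a power of $\|\xi - \xi'\|$ large enough to dominate the entropy of the $d$-dimensional cube. This is precisely where Rosenthal's inequality (rather than a naive second-moment bound) and the higher-moment integrability granted by sub-Gaussianity of $L(X_1)$ are essential; without either of these ingredients, the standard chaining/Kolmogorov argument would fail to deliver equicontinuity.
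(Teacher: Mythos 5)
Your proof is correct, but it routes through a genuinely different machinery for tightness. Both approaches agree on the finite-dimensional step (multivariate CLT applied to the i.i.d.\ centered vectors $(\tilde Z_i(\xi_1),\ldots,\tilde Z_i(\xi_q))$). Where you diverge is the tightness argument. The paper follows the empirical-process formulation of van der Vaart and Wellner: it verifies asymptotic tightness by checking (i) total boundedness of the index set, (ii) marginal tightness of $W_n(\xi_0)$ at a fixed $\xi_0$ via a sub-Gaussian concentration bound, and (iii) asymptotic uniform $d$-equicontinuity, established by the crude but effective pathwise bound $\sup_{\|\xi-\zeta\|<\delta}|W_n(\xi)-W_n(\zeta)|\le \frac{\delta}{\sqrt n}\sum_i L(X_i)$ followed by a Chernoff/Markov argument exploiting the moment generating function bound on $L$ in Assumption A1. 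You instead apply Rosenthal's inequality to the centered increments to obtain $\mathbb{E}|W_n(\xi)-W_n(\xi')|^{2p}\lesssim \|\xi-\xi'\|^{2p}$ uniformly in $n$, using only that all polynomial moments of $L(X_1)$ are finite (which follows from its sub-Gaussian tail), and then invoke the Kolmogorov--Chentsov tightness criterion with $2p>d$. Your route is more classical and does not require the asymptotic-equicontinuity formalism, at the cost of needing the Rosenthal bound and the $d$-dimensional Kolmogorov--Chentsov theorem; the paper's route avoids chaining-style moment estimates entirely because the Lipschitz constant $\delta n^{-1/2}\sum_i L(X_i)$ can be controlled directly through its MGF. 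One small point to make explicit in your write-up: the Kolmogorov--Chentsov tightness criterion also requires tightness of $W_n(\xi_0)$ at a single point, which you should note follows from the finite-dimensional convergence (or from the sub-Gaussian bound on $W_n(\xi_0)$, as the paper does). This is implicitly available in your argument but should be stated.
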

Sections 10.4 and 10.5 of \cite{watanabe2018mathematical}  provide heuristic arguments to study weak convergence of $W_n$; the assumptions require  $ \xi^{-k}Z_i(\xi)$ to be at least $d/2+1$ times differentiable. On the other hand, our {\bf Assumption A1} requires  $\tilde{Z}_i(\xi)$ to be Lipschitz and sub-Gaussian. In many examples, such as the aforementioned one-layered neural network in Example 27 of \cite{watanabe2018mathematical}, this is straightforward to verify. In this example, 
\be
Z_i(\theta_1,\theta_2)=\frac{1}{2}\theta_1^2\theta_2^2F^2(X_i; \theta_2) - \theta_1\theta_2Y_iF(X_i; \theta_2) + K(\theta_1,\theta_2),
\ee
where $F(x;\theta_2):=\tanh(\theta_2x)\slash \theta_2$.  Then,
\be
\tilde{Z}_i(\theta_1,\theta_2) =\frac{Z_i(\theta_1,\theta_2)}{\theta_1\theta_2}=\frac{1}{2}\theta_1\theta_2F^2(X_i; \theta_2) - Y_iF(X_i; \theta_2) + \frac{1}{2}\theta_1\theta_2K_0(\theta_2). 
\ee
Applying the change of variables $\xi_1=\theta_1$ and $\xi_2= \theta_2(K_0(\theta_2)\slash \theta_2)^{1\slash 2}=g(\theta_2)$ and noting that 
 $\tilde{Z}_i(\xi) := \tilde{Z}_i(\xi_1,g^{-1}(\xi_2))$, we have 
\be 
 \tilde{Z}_i(\xi) =\frac{1}{2}\xi_1g^{-1}(\xi_2)F^2\{X_i; g^{-1}(\xi_2)\} - Y_iF\{X_i; g^{-1}(\xi_2)\} +   \frac{1}{2}\xi_1g^{-1}(\xi_2)K_0\{g^{-1}(\xi_2)\}.
\ee
We show  $\tilde{Z}_i(\xi)$ above satisfies {\bf Assumption A1} in Appendix \ref{sec:A1verify}. 

\

The next ingredient in establishing the connection is to establish a weak limit of $D_n(t) = \int D_n(t, \xi) d\xi$.  In  Proposition \ref{lem:Dnt_conv}, we show that for each $t > 0$, $D_n(t)$ converges weakly to the following fixed stochastic process
\be
D(t)
=  \int t^{\lambda -1}e^{-t - \sqrt{t} W^*({\bf 0}, \xi_J)}  \varphi(\xi_J) d\xi_J. 
\ee
In addition,  we also show in Proposition \ref{lem:Dnt_conv} that  $\int_0^n D_n(t)dt$ converges in distribution to $\int_0^\infty D(t)dt$.
 \begin{prop}\label{lem:Dnt_conv} 
If $W_n \stackrel{w}{\rightarrow} W^*$, $\|W^*\|_\infty = O_p(1)$ and  $\xi_I \mapsto W^*(\xi_I, \xi_J)$ is  almost surely continuous, then for each $t > 0$,
$D_n(t) \stackrel{w}{\rightarrow} D(t)$ and  
 $\int_{0}^nD_n(t) dt  \stackrel{w}{\rightarrow}\int_0^\infty D(t)dt$.     
\end{prop}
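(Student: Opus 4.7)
The plan is to first prove, for each fixed $t > 0$, the pointwise weak convergence $D_n(t) \stackrel{w}{\to} D(t)$, then lift this to $\int_0^n D_n(t)\,dt \stackrel{w}{\to} \int_0^\infty D(t)\,dt$ via a truncation and uniform-integrability argument. The guiding intuition is that conditioning on $Z = \log(n/t) =: Z_n$ forces $\xi_I$ to concentrate at $\mathbf{0}$ as $n\to\infty$, so the random process $W_n(\xi_I,\xi_J)$ effectively evaluates the limit at the singular stratum $\xi_I = \mathbf{0}$.

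To verify the concentration, recall from the excerpt that $(Z_1, \ldots, Z_m) \mid Z = Z_n$ equals $Z_n \cdot \mathrm{Dir}(\mathbf{1}_m)$, whose marginals satisfy $Z_j/Z_n \sim \mathrm{Beta}(1, m-1)$. Hence, for any fixed $M > 0$, $\mathbb{P}(Z_j \le M \mid Z = Z_n) = 1 - (1 - M/Z_n)^{m-1} \le C M/Z_n$, which vanishes as $Z_n \to \infty$. A union bound gives $\min_{j\in I} Z_j \to \infty$ in conditional probability, and since $\xi_j = e^{-Z_j/(2k_j)}$, we deduce $\xi_I \stackrel{p}{\to} \mathbf{0}$ under the conditional law given $Z = Z_n$.

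For fixed $t > 0$, I would rewrite $D_n(t) = t^{\lambda-1} e^{-t} \int I_n(\xi_J)\,\varphi(\xi_J)\,d\xi_J$, where $I_n(\xi_J) := \mathbb{E}[e^{-\sqrt{t}\,W_n(\xi_I,\xi_J)} \mid Z = Z_n]$ is an expectation against the proper conditional probability measure of $\xi_I$ (so the $\prod_j 1/\xi_j$ factor is merely a Jacobian, not a genuine singularity). By Skorohod representation applied to $W_n \stackrel{w}{\to} W^*$ in the uniform topology on $C(\Omega)$, one may assume $\|W_n - W^*\|_\infty \to 0$ almost surely on an enlarged probability space. Combining this with $\xi_I \stackrel{p}{\to} \mathbf{0}$ and the almost-sure continuity of $\xi_I \mapsto W^*(\xi_I, \xi_J)$, one obtains $W_n(\xi_I, \xi_J) \stackrel{p}{\to} W^*(\mathbf{0}, \xi_J)$. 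The uniform bound $|W_n| \le \|W_n\|_\infty = O_p(1)$ then allows passage to the limit inside the expectation, giving $I_n(\xi_J) \stackrel{w}{\to} e^{-\sqrt{t}\,W^*(\mathbf{0},\xi_J)}$; a second application in $\xi_J$ yields $D_n(t) \stackrel{w}{\to} D(t)$.

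For the integrated statement, I would invoke the uniform envelope $D_n(t) \le t^{\lambda-1} e^{-t + \sqrt{t}\,\|W_n\|_\infty}$. Tightness of $\|W_n\|_\infty$ means that for any $\varepsilon > 0$ there exists $M$ with $\mathbb{P}(\|W_n\|_\infty > M) < \varepsilon$ uniformly in $n$; on the complement, $D_n$ is dominated by the deterministic integrable envelope $t^{\lambda-1} e^{-t + \sqrt{t}M}$. I would split $\int_0^n D_n(t)\,dt = \int_0^T + \int_T^n$, choose $T$ so the envelope's tail contributes below $\varepsilon$, and upgrade the pointwise weak convergence on $[0,T]$ to convergence of the truncated integrals by reusing the Skorohod coupling together with dominated convergence, before letting $T \to \infty$ and $\varepsilon \to 0$. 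The main obstacle is precisely this last upgrade from pointwise to integrated weak convergence; it would fail if the couplings at different $t$'s were uncoordinated, but here they are automatically consistent because $W_n$ is a single process converging uniformly on $\Omega$ (not a family indexed by $t$), which permits a Fubini-type interchange with the envelope bound supplying the required domination.
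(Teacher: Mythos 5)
Your proposal is correct and follows essentially the same route as the paper: both decompose the error into a "replace $W_n$ by $W^*$" piece and a "replace $W^*(\xi_I,\xi_J)$ by $W^*(\mathbf{0},\xi_J)$" piece, control the latter through the concentration of $\xi_I$ near $\mathbf{0}$ under the conditional law given $Z = \log(n/t)$ together with the almost-sure continuity of $W^*$, and close the argument with a dominated-convergence step using the integrable envelope $t^{\lambda-1}e^{-t+\sqrt{t}\|W\|_\infty}$. The only organizational differences are that you prove the pointwise convergence $D_n(t)\to D(t)$ first and then integrate with a truncation/tightness argument, whereas the paper directly bounds $\int_0^n|D_n(t)-D(t)|\,dt$ and dispatches the tail $\int_n^\infty D(t)\,dt$ at the end, and that you invoke Skorohod representation explicitly while the paper appeals more loosely to the continuous mapping theorem.
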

Using Theorem \ref{prop:sec_order} and Proposition \ref{lem:Dnt_conv},   
\be
\m Z(n) \prod_{j=1}^d(h_j+1) &\sim  \frac{n^{-\lambda} (\log n)^{m-1} \lambda^m}{\Gamma(m)}  \int_0^\infty t^{\lambda -1}  e^{-t + \sqrt{t}W^*({\bf 0}, \xi_J)}   \varphi(\xi_J) \varphi(\xi_J) dt \nonumber \\
\m Z(n)&\sim  \frac{n^{-\lambda} (\log n)^{m-1} }{2^m(m-1)!\prod_{j=1}^m k_j}  \int_0^\infty t^{\lambda -1}  e^{-t + \sqrt{t}W^*({\bf 0}, \xi_J)}  \xi_J^{h_J} dt, \label{eq:watacompare}
\ee
where $h_J =(h_{m+1}, \ldots, h_d$). 
Using the properties of the Dirac delta function, we can write  
\be
D(t)
=  \int  t^{\lambda -1} e^{-t -\sqrt{t} W^*(\xi)} \delta_{{\bf 0}}(\xi_I) \varphi(\xi_J) d\xi. 
\ee
where  $ \delta_{{\bf 0}}(\xi_I)$ is a Dirac delta measure at ${\bf 0}$, which also appears in Theorem 11 in \cite{watanabe2009algebraic} in the expansion of $\m Z(n) $.   Observing that that $k_j = 0$ for $j=m+1, \ldots d$, \eqref{eq:watacompare} exactly matches with the equation (5.32) of Theorem 10 or the expression under Theorem 11 in \cite{watanabe2009algebraic}.

\section{Mean-field VI for normal form}\label{sec:var}

In this section, our goal is to show that mean-field variational approximation always correctly recovers the RLCT for singular models in normal form, which therefore constitute an interesting class of statistical examples where the mean-field approximation is provably better than the Laplace approximation. An important ingredient of our analysis is to study the dynamics of the associated CAVI algorithm, which might be of independent interest. While mean-field inference is known to produce meaningful parameter estimates in many statistical models \citep{yang2017alpha,pati2017statistical}, the algorithmic landscape contains both positive and negative results \citep{wang2006convergence,zhang2017theoretical,mukherjee2018mean,ghorbani2018instability}.

To simplify our analysis, we shall work with the deterministic quantity $\m Z_K(n)$. Define a probability density 
\be\label{eq:VIdetpost}
\gamma_K^{(n)}(\xi) = \frac{e^{-n K(\xi)} \, \varphi(\xi)}{\mathcal{Z}_K(n)}, \quad \xi \in \Omega,
\ee
with $K(\xi) = \xi^{2 \mr{k}}$ in normal form as in Theorem \ref{thm:rclt_nform} and $\varphi(\xi) \,\propto\, b(\xi) \xi^{\mr{h}}$ is a probability density on $\Omega$ with the analytic function $b(\cdot)$ as in Corollary \ref{cor:genprior}. Clearly, $\mathcal{Z}_K(n)$ is then recognized as the normalizing constant of $\gamma_K^{(n)}(\cdot)$, which serves as a deterministic version of the posterior defined in Eq.\,\eqref{eq:post}. 

For any probability measure $\rho$ on $\Omega$ with $\rho \ll \varphi$, the following well-known identity is easy to establish, 
\be\label{eq:VIid}
D\big(\rho \,\|\, \gamma_K^{(n)} \big) = \log \m Z_k(n) + \bigg[ \int_\Omega n K(\xi) \, \rho(d \xi) + D(\rho \,\|\, \varphi)\bigg], 
\ee
where recall $D(\mu \,\|\, \nu) = E_\mu (\log d\mu/d\nu)$ is the Kullback--Leibler divergence between $\mu$ and $\nu$. An immediate upshot of this is the Gibb's variational inequality, which states that for any probability density $\rho \ll \varphi$ on $\Omega$, 
\be\label{eq:GibbsVI}
\log \m Z_K(n) \ge \Psi_n(\rho) \,:= - \bigg[ \int n K(\xi) \, \rho(d \xi) + D(\rho \,\|\, \varphi)\bigg],
\ee
with equality attained if and only if $\rho = \gamma_K^{(n)}$. The Gibb's variational inequality is central to a variational approximation to the normalizing constant $\mathcal{Z}_K(n)$. The quantity $\Psi_n(\rho)$ in the right hand side of \eqref{eq:GibbsVI} is a lower bound to $\log \mathcal{Z}_K(n)$ for any $\rho \ll \varphi$. A variational lower bound to $\log \mathcal{Z}_K(n)$ is then obtained by optimizing the variational parameter $\rho$ over a family of probability densities $\mathcal{F}$ on $\Omega$, 
\be\label{eq:ELBO}
\log \mathcal{Z}_K(n) \ge \ELBO({\m F}) :\, = \sup_{\rho \in \m F} \, \Psi_n(\rho). 
\ee
The notation $\ELBO$ here abbreviates {\em evidence lower bound}, which is commonly used to designate the variational lower bound in Bayesian statistics. If the supremum in \eqref{eq:ELBO} is attained at some $\rho^\star \in \m F$, the density $\rho^\star$ is called the optimal variational approximation. It follows from equation \eqref{eq:VIid} that $\rho^\star$ is a best approximation to $\gamma_K^{(n)}$ in terms of KL divergence from the class $\m F$, i.e., 
$$
D\big(\rho^\star \,\|\,  \gamma_K^{(n)} \big) = \inf_{\rho \in \m F} D\big(\rho \,\|\, \gamma_K^{(n)} \big). 
$$
The choice of the family $\m F$ typically aims to balance computational tractability and expressiveness. A popular example is the mean-field family, 
\be\label{eq:MF}
    \mathcal{F}_{\MF} :\, = \big\{ \rho = \rho_1 \otimes \ldots \otimes \rho_d \,:\, \rho \ll \varphi \text{ a prob. measure on } \Omega\big\},
\ee
where $\rho$ is assumed to be a product-measure, with no further restriction on the constituent arms. Mean-field variational approximation has its origins in statistical physics \citep{parisi1988statistical}, and has subsequently found extensive usage in Bayesian statistics for parameter estimation and model selection \citep{bishop2006pattern}. 

Since $\gamma_K^{(n)}$ does not lie in $\m F_{\MF}$ for any $n$, it follows that the inequality in equation \eqref{eq:ELBO} is a strict one if we restrict $\m F$ to the mean-field family. We, however, show below that the mean-field approximation correctly recovers the leading order term in the asymptotic expansion of $\log \m Z_K(n)$.


\begin{thm}\label{thm:VB_main}
Consider a variational approximation \eqref{eq:ELBO} to $\m Z_K(n)$ in equation \eqref{eq:VIdetpost}, where the variational family $\m F$ is taken to be the mean-field family $\m F_{\MF}$ defined in equation \eqref{eq:MF}. Then, there exists a constant $C$ independent of $n$ such that $\ELBO(\m F_{\MF}) \ge - \lambda \log n - C$, where $\lambda = \min_{j} [(h_j + 1)/(2 k_j)]$ is the real log canonical threshold. 
\end{thm}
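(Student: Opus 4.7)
The plan is to apply the Gibbs variational inequality \eqref{eq:GibbsVI} with a carefully chosen product density $\rho \in \m F_{\MF}$ that trades off the two contributions to $-\Psi_n(\rho)$ — namely $n\int K(\xi)\rho(d\xi)$ and $D(\rho\,\|\,\varphi)$ — so that both are of the same order. Heuristically, $\rho$ should concentrate near the singular locus $\{K = 0\}$ enough to tame the first term, while not incurring too large a KL penalty to the prior. Scale analysis (setting each marginal of $\rho$ to be the prior truncated to $[0, n^{-\alpha_j}]$, then minimizing $\sum_j (h_j+1)\alpha_j$ subject to $\sum_{j: k_j>0} k_j \alpha_j \ge 1/2$) reveals that the optimal strategy is to put all the shrinkage on a single coordinate $j^\star$ that attains $\lambda = (h_{j^\star}+1)/(2k_{j^\star})$.

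Fix such an index $j^\star$ and set $a := n^{-1/(2k_{j^\star})}$. Define $\rho = \bigotimes_{j=1}^d \rho_j$ by
\begin{equation*}
\rho_j(\xi_j) = (h_j+1)\,\xi_j^{h_j}\,\mathbf{1}_{[0,1]}(\xi_j) \quad (j \ne j^\star), \qquad \rho_{j^\star}(\xi_{j^\star}) = (h_{j^\star}+1)\,a^{-(h_{j^\star}+1)}\,\xi_{j^\star}^{h_{j^\star}}\,\mathbf{1}_{[0,a]}(\xi_{j^\star}).
\end{equation*}
Clearly $\rho \in \m F_{\MF}$ and $\rho \ll \varphi$. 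For the expected loss term, since $K(\xi) = \prod_j \xi_j^{2k_j}$ and $\rho$ is a product,
\begin{equation*}
n \int K(\xi)\,\rho(d\xi) \;=\; n \prod_{j} \mathbb{E}_{\rho_j}\!\big[\xi_j^{2k_j}\big] \;=\; n \cdot \frac{h_{j^\star}+1}{h_{j^\star}+1+2k_{j^\star}}\,a^{2k_{j^\star}} \prod_{j \ne j^\star, k_j > 0} \frac{h_j+1}{h_j+1+2k_j},
\end{equation*}
which equals a constant (independent of $n$) by the choice of $a$.

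For the KL term, write $\varphi(\xi) = c_\varphi\, b(\xi)\, \xi^{\mathrm{h}}$. A direct cancellation of the $h_j \log \xi_j$ pieces between $\log \rho_j$ and $\log \varphi$ yields the pointwise identity
\begin{equation*}
\log\frac{\rho(\xi)}{\varphi(\xi)} \;=\; \sum_j \log(h_j+1) \;-\; (h_{j^\star}+1)\log a \;-\; \log c_\varphi \;-\; \log b(\xi),
\end{equation*}
on the support of $\rho$. Integrating against $\rho$ and using that $\log b$ is bounded on the compact set $\Omega$ (as $b$ is positive continuous), we obtain $D(\rho\,\|\,\varphi) = -(h_{j^\star}+1)\log a + O(1) = \lambda \log n + O(1)$. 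Combining with the previous display and applying \eqref{eq:GibbsVI} gives $\ELBO(\m F_{\MF}) \ge \Psi_n(\rho) \ge -\lambda\log n - C$.

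The main conceptual obstacle is identifying the correct candidate: the posterior $\gamma_K^{(n)}$ has strong cross-coordinate dependence (Figure \ref{fig:ex12}) and no product approximation can capture it pointwise, yet at the level of normalizing constants, concentrating the mass along a single coordinate $j^\star$ is enough to recover the leading $\lambda \log n$ rate. A secondary nuisance is the non-product nature of $\varphi$ when $b$ is not separable, which is handled cleanly because the cancellation in $\log(\rho/\varphi)$ isolates $b$ into a single bounded additive term. Sharpness of this bound (i.e., the matching upper bound and the appearance of $(m-1)\log\log n$) is addressed in the companion analysis of the CAVI dynamics, but is not needed for the statement proved here.
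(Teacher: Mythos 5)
Your proof is correct, and it takes a genuinely different route from the paper's. The paper constructs a candidate from the family $f_{k,h,\beta}(u) \propto u^h \exp(-\beta u^{2k})\mathbf{1}_{[0,1]}(u)$, tilting the prior marginal for one coordinate $g$ by the factor $\exp(-n u^{2k_g})$ and for all other coordinates by $\exp(-u^{2k_j})$; the relevant expectations are then expressed via incomplete gamma functions (Lemma \ref{lem:qkhb}, Remark \ref{rem:incomp_gamma}). Your candidate instead keeps the exact prior marginal $\bar{\varphi}_j$ on all coordinates except $j^\star$, where it uses the prior marginal \emph{hard-truncated} to $[0, n^{-1/(2k_{j^\star})}]$. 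Both achieve the same concentration scale $\xi_{j^\star} \lesssim n^{-1/(2k_{j^\star})}$ and the same $-\lambda\log n$ rate, but your computation involves only elementary integrals and avoids the incomplete-gamma machinery entirely, and the log-ratio cancellation cleanly isolates the non-product factor $b(\xi)$ as a bounded additive term (the paper handles this slightly less directly via the split $D(\wt{\rho}\|\varphi) = D(\wt{\rho}\|\bar{\varphi}) + \int\wt{\rho}\log(\bar{\varphi}/\varphi)$). The motivating scale analysis you give — a linear program minimizing $\sum_j (h_j+1)\alpha_j$ subject to $\sum_j k_j\alpha_j \ge 1/2$, whose vertex solution selects the single index achieving $\lambda$ — is a nice a priori explanation of the candidate's form that the paper arrives at empirically from the CAVI fixed-point orders. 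What the paper's choice buys, and yours does not, is that $f_{k,h,\beta}$ is \emph{exactly} the family that the CAVI coordinate updates range over (Eq.\,\eqref{eq:cavi_t}), which is the ingredient that makes the sharpness argument of Theorem \ref{thm:mf_order} go through; you correctly flag that this is out of scope for the present statement.
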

Since $\log \m Z_K(n) \asymp - \lambda \log n + (m-1) \log (\log n)$, it follows that the mean-field approximation correctly recovers the leading order term in the asymptotic expansion of $\log \m Z_K(n)$. This, in particular, implies that the relative error $R_n$ due to the mean-field approximation 
\be
    R_n :\,= \frac{ | \log \m Z_K(n) - \ELBO(\mathcal{F}_{\MF}) | }{ | \log \m Z_K(n)  |} \to 0 \text{ as } n \to \infty. 
\ee

This is rather interesting, as the density $\gamma_K^{(n)}$ clearly does not lie in $\m F_{\MF}$ for any finite $n$. 
\begin{proof}
Our strategy is to produce a candidate $\wt{\rho} = \otimes_{j=1}^d \wt{\rho}_j$ such that $\Psi_n(\wt{\rho}) \ge -\lambda \log n - C$. We introduce some notation before describing the $\wt{\rho}_j$s. For $k, h, \beta > 0$, define a density $f_{k, h, \beta}$ on $[0, 1]$ given by 
\be\label{eq:qkhb}
    f_{k, h, \beta}(u) = \frac{ u^h \exp(-\beta u^{2k}) \mathbbm{1}_{[0, 1]}(u) }{ B(k, h, \beta) }, 
\ee
where $B(k, h, \beta) = \int_0^1 x^h \exp(-\beta x^{2k}) dx$. We record two useful facts about $f_{k, h, \beta}$ in the Lemma below. We collect some well-known facts first about the incomplete gamma function; see \cite{abramowitz1964handbook}. 
\begin{remark}\label{rem:incomp_gamma}
For $x, a > 0$,  denote the lower incomplete gamma function by
\be
\gamma(a,x) = \frac{1}{\Gamma(a)} \int_0^x t^{a-1}e^{-t} \,dt. 
\ee
For any fixed $a > 0$, $\gamma(a,\cdot)$ takes values in $(0, 1)$, with $\lim_{x \to \infty} \gamma(a,x) = 1$. Also, $\lim_{x \to 0} \gamma(a, x) \slash x^a = 1 \slash \Gamma(a+1)$, and 
\be 
\gamma(a+1,x) = \gamma(a,x) - \frac{x^a e^{-x}}{\Gamma(a+1)}. 
\ee
\end{remark}

\begin{lemma}\label{lem:qkhb}
Let the density $f_{k,h, \beta}$ be as in equation \eqref{eq:qkhb}. Then, \\[1ex]
(i) The normalizing constant $B(k,h,\beta)$ is given by 
\be
B(k, h, \beta) = \frac{\beta^{-\lambda}\Gamma(\lambda)\gamma(\lambda, \beta)}{2k}.
\ee
(ii) The quantity $\int_0^1 u^{2k} f_{k,h, \beta}(du)$ depends on $k$ and $h$ only through $\lambda :\,= (h+1)/(2k)$. Call this expectation $G(\lambda, \beta)$, and we have 
\be
    G(\lambda, \beta) :\,= \int_0^1 u^{2k} f_{k,h, \beta}(u)du = \frac{\lambda}{\beta} \, \frac{\gamma(\lambda+1,\beta)}{\gamma(\lambda, \beta)}.
\ee
(iii) We have, 
\be 
\lim_{\beta \to \infty} \frac{ |\log B(k, h, \beta) - (-\lambda \log \beta)| }{ \lambda \log \beta} = 0, \quad \lim_{\beta \to \infty} \frac{G(\lambda, \beta)}{\lambda \slash \beta} = 1. 
\ee
Thus, for large $\beta$, $\log B(k, h, \beta) \asymp - \lambda \log \beta$, and $G(\lambda, \beta) \asymp \lambda \slash \beta$.
\end{lemma}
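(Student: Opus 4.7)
The plan is to establish all three parts via a single change of variables that converts the integrals defining $B$ and $G$ into (incomplete) gamma integrals, after which everything becomes bookkeeping.

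For part (i), I would substitute $t = \beta u^{2k}$ in $B(k, h, \beta) = \int_0^1 u^h \exp(-\beta u^{2k}) \, du$. Under this change of variables, $u = (t/\beta)^{1/(2k)}$ and $du = (2k)^{-1} \beta^{-1/(2k)} t^{1/(2k) - 1} dt$, and the exponent $h$ on $u$ combines with the Jacobian to produce $t^{(h+1)/(2k) - 1} = t^{\lambda - 1}$. The limits transform from $(0, 1)$ to $(0, \beta)$, yielding
\be
B(k, h, \beta) \;=\; \frac{\beta^{-\lambda}}{2k} \int_0^\beta t^{\lambda - 1} e^{-t} \, dt \;=\; \frac{\beta^{-\lambda} \Gamma(\lambda) \, \gamma(\lambda, \beta)}{2k},
\ee
which is exactly the claim, using the definition of the lower incomplete gamma function in Remark \ref{rem:incomp_gamma}.

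For part (ii), I would simply observe that $\int_0^1 u^{2k} f_{k, h, \beta}(u) \, du$ is itself a ratio of two integrals of the form $B$: the denominator is $B(k, h, \beta)$ and the numerator is $B(k, h + 2k, \beta)$, whose associated $\lambda$-parameter is $(h + 2k + 1)/(2k) = \lambda + 1$. Substituting the closed form from part (i) into both and using $\Gamma(\lambda + 1) = \lambda \Gamma(\lambda)$ causes the $\Gamma$-factors and the $(2k)^{-1}$ factors to telescope, leaving $G(\lambda, \beta) = (\lambda / \beta) \cdot \gamma(\lambda + 1, \beta) / \gamma(\lambda, \beta)$ as claimed. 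A nice byproduct is that the dependence on $k$ and $h$ is purely through $\lambda$, as advertised.

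For part (iii), the key input from Remark \ref{rem:incomp_gamma} is that $\gamma(a, \beta) \to 1$ as $\beta \to \infty$ for any fixed $a > 0$. Taking logs in part (i),
\be
\log B(k, h, \beta) \;=\; -\lambda \log \beta + \log\!\left( \frac{\Gamma(\lambda)}{2k} \right) + \log \gamma(\lambda, \beta),
\ee
so $\log B(k, h, \beta) + \lambda \log \beta$ is bounded, and dividing by $\lambda \log \beta$ sends the ratio to zero. Similarly, in the expression from part (ii), both $\gamma(\lambda + 1, \beta)$ and $\gamma(\lambda, \beta)$ tend to $1$, so their ratio tends to $1$ and hence $G(\lambda, \beta) / (\lambda / \beta) \to 1$.

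I do not anticipate any genuine obstacle: once the change of variables $t = \beta u^{2k}$ is made, all three parts reduce to elementary manipulations of the incomplete gamma function, and the asymptotics follow from the standard fact $\gamma(a, \beta) \to 1$. The only mild subtlety is keeping the exponent bookkeeping straight (in particular recognizing that raising $h$ to $h + 2k$ raises $\lambda$ to $\lambda + 1$), which makes part (ii) essentially automatic from part (i).
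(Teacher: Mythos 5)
Your proof is correct and follows essentially the same route as the paper's: part (i) via the substitution $t = \beta u^{2k}$, part (ii) by recognizing $G = B(k, h+2k, \beta)/B(k, h, \beta)$ and applying part (i), and part (iii) via logarithms and $\gamma(a, \beta) \to 1$. The only minor difference is in the second limit of part (iii): the paper uses the recursion $\gamma(\lambda+1, \beta) = \gamma(\lambda, \beta) - \beta^\lambda e^{-\beta}/\Gamma(\lambda+1)$ before taking limits, whereas you directly observe that $\gamma(\lambda+1,\beta)/\gamma(\lambda,\beta) \to 1$; both are valid and equivalent.
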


We now construct $\wt{\rho} = \otimes_{j=1}^d \wt{\rho}_j$. Let $g \in \{1, \ldots, d\}$ be such that $(h_g + 1)/(2 k_g) = \lambda$; in case there are multiple such indices, we arbitrarily break tie. Set $\wt{\rho}_j = f_{k_j, h_j, \beta_j}$ for $j = 1, \ldots, d$ with $\beta_g = n$ and $\beta_j = 1$ for $j \ne g$. With this choice, let us now bound $\Psi_n(\wt{\rho})$. We divide this up into two parts. First, we have
\be
    \int_\Omega n K(\xi) \wt{\rho}(\xi) d\xi = n \prod_{j=1}^d \int_0^1 \xi_j^{2 k_j} \wt{\rho}_j(\xi_j) d\xi_j = n \prod_{j=1}^d G(\lambda_j, \beta_j) = n G(\lambda, n) \, \prod_{j \ne g} G(\lambda_j, 1), 
\ee
where recall that $\lambda_j = (h_j + 1)/(2 k_j)$. For the first equality, we use the product form of both $K(\cdot)$ and $\wt{\rho}$. The second inequality uses Lemma \ref{lem:qkhb} and that $(\lambda_g, \beta_g) = (\lambda, n)$. The quantity $\prod_{j\ne g} G(\lambda_j, 1)$ is clearly a constant free of $n$, and from part (iii) of Lemma \ref{lem:qkhb}, $n G(\lambda, n) \asymp \lambda$. This implies $\int n K(\xi) \wt{\rho}(\xi) d\xi$ is overall of a constant order. 

Next, consider $D(\wt{\rho} \,\|\, \varphi)$. Let $\wt{\varphi}$ be the probability density on $\Omega$ with $\bar{\varphi}(\xi) \,\propto\, \xi^{\mr{h}}$. Write 
\be
    D(\wt{\rho} \,\|\, \varphi) = D(\wt{\rho} \,\|\, \bar{\varphi}) + \int_\Omega \wt{\rho} \, \log \frac{ \bar{\varphi} }{ \varphi }. 
\ee
The second term in the above display, up to an additive constant, is $- \int_\Omega \log b(\xi) \, \wt{\rho}(d\xi)$, which is bounded above by $- \log b_1$ since $b(\cdot) > b_1$ on $\Omega$. Hence, we focus attention on the first term $D(\wt{\rho} \,\|\, \bar{\varphi})$, which is simpler to analyze than $D(\wt{\rho} \,\|\, \varphi)$ since both $\wt{\rho}$ and $\bar{\varphi}$ have a product form as in equation \eqref{eq:MF}. In particular, we have $D(\wt{\rho} \,\|\, \bar{\varphi}) = \sum_{j=1}^d D(\wt{\rho}_j \,\|\, \bar{\varphi}_j)$, where $\bar{\varphi}_j$ is the $j$th marginal of $\bar{\varphi}$ with density 
$\bar{\varphi}_j(u) \,\propto \, u^{h_j}$ for $u \in [0, 1]$. Since $\wt{\rho}_j = f_{k_j, h_j, \beta_j}$, we have,
\be
    D(\wt{\rho}_j \,\|\, \bar{\varphi}_j) 
& = - \beta_j \int_0^1 u^{2 k_j} f_{k_j, h_j, \beta_j}(u) du - \log B(k_j, h_j, \beta_j) + \log(1+h_j) \nonumber\\
& = - \beta_j G(\lambda_j, \beta_j) - \log B(k_j, h_j, \beta_j) + \log(1+h_j). 
\ee
If $j \ne g$, the above quantity is some constant free of $n$. For $j = g$, from part (iii) of Lemma \ref{lem:qkhb}, we get $D(\wt{\rho}_g \,\|\, \bar{\varphi}_g) = C + \lambda \log n$. 

Putting the pieces together, we have proved that $\Psi_n(\wt{\rho}) \ge -\lambda \log n - C$ for some constant $C$ free of $n$. This completes the proof.

\end{proof}

\subsection{Coordinate ascent algorithm and analysis of dynamics}
In this section, we first provide some insight into our choice of the candidate solution $\wt{\rho}$ in the proof of Theorem \ref{thm:VB_main}. The choice of $\wt{\rho}$ was motivated by empirically analyzing the behavior of a coordinate ascent (CAVI) algorithm for the optimization problem $\sup_{\rho \in \m F_{\MF}} \Psi_n(\rho)$ in the $d = 2$ case, which naturally constrains the coordinate updates to lie in the family of densities $\{f_{k, h, \beta} \}$. Secondly, we comment on the ``optimality" of the candidate $\wt{\rho}$, regarding which Theorem \ref{thm:VB_main} is inconclusive. In particular, the theorem only obtains a lower bound to $\ELBO(\m F_{\MF})$, and it is natural to question whether there is a scope for improvement -- note that there is a $\log(\log n)$ gap between the asymptotic order of $\log \m Z_K(n)$ and the lower bound to the ELBO. Studying the dynamics of the CAVI algorithm, we demonstrate a class of examples where $\ELBO(\m F_{\MF}) = - \lambda \log n +C$ for some constant $C$, implying no further improvement is possible uniformly. 

The Coordinate Ascent Variational Inference (CAVI) algorithm is popular in statistics and machine learning for maximizing an evidence lower bound over a mean-field family; see Chapter 10 of Bishop  for a book-level treatment. The CAVI can be interpreted as a cyclical coordinate ascent algorithm which at any iteration $t \ge 1$ cycles through maximizing $\Psi_n(\rho)$ as a function of $\rho_j$, keeping $\{\rho_\ell\}_{\ell \ne j}$ fixed at their current value $\{\rho_\ell^{(t)}\}_{\ell \ne j}$. For example, in the $d = 2$ case, the iterates $\rho^{(t)} = \rho_1^{(t)} \otimes \rho_2^{(t)}$ for $t \ge 1$ are given by 
\be 
\rho_1^{(t)} = \argmax_{\rho_1} \Psi_n\big(\rho_1 \otimes \rho_2^{(t-1)}\big), \quad \rho_2^{(t)} = \argmax_{\rho_2} \Psi_n\big(\rho_1^{(t)} \otimes \rho_2\big), 
\ee
with an arbitrary initialization $\rho^{(0)} = \rho_1^{(0)} \otimes \rho_2^{(0)} \ll \varphi$, and assuming the first component gets updated first. The objective function $\Psi_n(\rho_1 \otimes \rho_2)$ is concave in each argument\footnote{although, it is rarely jointly concave} so that the maximization problems in the update step above have unique solutions. Moreover, these maximizers admit a convenient integral representation, which facilitates tractability of the updates in conditionally conjugate models. It is straightforward to see that the successive CAVI iterates increase the objective function value, since for any $t \ge 1$, 
\be\label{eq:cavi_gen}
\Psi_n\big(\rho_1^{(t)} \otimes \rho_2^{(t)}\big) \ge \Psi_n\big(\rho_1^{(t)} \otimes \rho_2^{(t-1)}\big) \ge \Psi_n\big(\rho_1^{(t-1)} \otimes \rho_2^{(t-1)}\big), 
\ee
although convergence to a global optimum is not guaranteed in general. 

Returning to the present case, consider the normal form of a singular model with parameter dimension $d = 2$, 
\be \label{eq:special_2d}
\gamma_K^{(n)}(\xi_1, \xi_2) \,\propto\, \xi_1^{h_1}\xi_2^{h_2} \, \exp(-n\xi_1^{2k_1}\xi_2^{2k_2}), \quad (\xi_1, \xi_2) \in [0,1]^2,
\ee
resulting from setting $b(\xi) \equiv 1$ in equation \eqref{eq:VIdetpost}. Let $\lambda_i=(h_i+1)\slash 2k_i$ for $i=1,2$ as usual; we assume without loss of generality that $\lambda_1\leq \lambda_2$, implying the real log canonical threshold for this model is $\lambda=\lambda_1$. The mean field family $\m F_{\text{MF}}$ in this case consists of product distributions $\rho_1 \, \otimes \, \rho_2$, where $\rho_1$ and $\rho_2$ are absolutely continuous densities on $[0, 1]$. We derive in Appendix  \ref{app2} that the $t$th iteration of the CAVI algorithm \eqref{eq:cavi_gen} in this case is $\rho^{(t)}(\xi)=\rho_{1}^{(t)}(\xi_1) \cdot \rho_{2}^{(t)}(\xi_2)$, with
\be\label{eq:cavi_t}
\rho_{1}^{(t)}(\xi_1) = f_{k_1, h_1, n\mu_1^{(t)}}(\xi_1), \quad \rho_{2}^{(t)}(\xi_2) = f_{k_2, h_2,n\mu_2^{(t)}}(\xi_2),
\ee
where recall the density $f_{k, h, \beta}$ is defined in equation \eqref{eq:qkhb} and for $t \ge 1$,
\be\label{eq:mu_t}
\mu_{1}^{(t)} = G\big(\lambda_2, n \mu_2^{(t-1)}\big), \quad \mu_2(t) = G\big(\lambda_1, n\mu_1^{(t)}\big). 
\ee
We also record the value of the ELBO at iteration $t$, 
\be
     \Psi_n(\rho^{(t)}) = & -n \, G\big(\lambda_1, n \mu_1^{(t)} \big) \, G\big(\lambda_2, n\mu_2^{(t)} \big)  \nonumber \\
     & + n \, \mu_1^{(t)} \, G\big(\lambda_1,n \mu_1^{(t)} \big) +n \, \mu_2^{(t)} \, G\big(\lambda_2,n \mu_2^{(t)} \big) \nonumber\\
    & + \log B\big(h_1,k_1,n\mu_1^{(t)}\big) + \log B\big(h_2,k_2,n\mu_2^{(t)} \big).
 \label{eq:elbo_t}
\ee
Inspecting equation \eqref{eq:cavi_t}, it becomes apparent that the changes in $\rho_i^{(t)}$ across $t$ entirely takes place through the univariate parameters $\mu_i^{(t)}$, for $i = 1, 2$, with the joint evolution of $(\mu_1^{(t)}, \mu_2^{(t)})$ described through the 2d dynamical system in equation \eqref{eq:mu_t}. 
Instead of analyzing the dynamics of the two-dimensional sequential system in \eqref{eq:mu_t}, we achieve a further simplification by decoupling it into the following one-dimensional systems 
\be
\begin{aligned}
    \mu_1^{(t)} &= G\Big(\lambda_2, n \, G\big(\lambda_1, n \mu_1^{(t-1)} \big)\Big), \quad t\geq 2, \\
    \mu_2^{(t)} &= G\Big(\lambda_1, n \, G\big(\lambda_2, n \mu_2^{(t-1)} \big)\Big), \quad t\geq 1.
\end{aligned}
\label{eq:sys_decouple}
\ee
These equations, when initialized using $\mu_1^{(1)} = G\big(\lambda_1,n \mu_2^{(0)} \big)$, will produce the same behavior as the original system \eqref{eq:mu_t} and are far easier to analyze than the coupled two-dimensional system. 

We now show that the system \eqref{eq:sys_decouple} has a {\em unique globally attracting fixed point} $(\mu_1^\star, \mu_2^\star)$, i.e., the updates $\big(\mu_1^{(t)}, \mu_2^{(t)}\big)$ will always converge to $(\mu_1^\star, \mu_2^\star)$ as $t$ tends to infinity, irrespective of the initialization -- there is no possibility of cycles or divergence. The asymmetric ($\lambda_1 < \lambda_2$) and symmetric ($\lambda_1 = \lambda_2$) cases require separate attention, and are treated in two parts. We comment that the existence of unique fixed points is generally not guaranteed; see, for example, \cite{plummer2020dynamics}.

\begin{lemma}\label{lemma:fp}
Let $0<\lambda_1 < \lambda_2 <\infty$. Fix $n \in \mb N$. 
\begin{enumerate}
    \item The function $x \mapsto G\big(\lambda_1,nG(\lambda_2,nx)\big)$ on $[0, \infty)$ has a unique fixed point which lies in the interval $\Lambda_1 :\,= [0, \lambda_1/(\lambda_1 + 1)]$. 
    \item The function $x \mapsto G\big(\lambda_2,nG(\lambda_1,nx)\big)$ on $[0, \infty)$ has a unique fixed point which lies in the interval $\Lambda_2 :\,= [0, \lambda_2/(\lambda_2 + 1)]$. 
\end{enumerate}
Each of these fixed points are globally attracting. \\[1ex]
Let $\lambda_1 = \lambda_2=\lambda$ with $0<\lambda<\infty$. The function $x \mapsto G\big(\lambda,nG(\lambda,nx)\big)$ on $[0, \infty)$ has a unique globally attracting fixed point which lies in the interval $[0,\sqrt{\lambda\slash n}]$.  
\end{lemma}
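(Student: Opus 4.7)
The three fixed-point problems are all of the form $\phi(x) = x$ with $\phi(x) := G(\lambda, nG(\lambda', nx))$ (with $\lambda = \lambda'$ in the symmetric case), so I would treat them in a unified framework. The main structural input is that $f_{k,h,\beta}$ is an exponential family in the natural parameter $-\beta$ with sufficient statistic $u^{2k}$, so $G(\lambda,\beta)$ is the corresponding mean and $\partial G/\partial \beta = -V(\lambda,\beta)$, where $V(\lambda,\beta) := \mathrm{Var}_{f_{k,h,\beta}}(u^{2k}) > 0$. Combined with Lemma~\ref{lem:qkhb}(iii) and a direct check that $G(\lambda,0) = \lambda/(\lambda+1)$, this makes $G(\lambda,\cdot)$ a continuous strictly-decreasing bijection $[0,\infty) \to (0,\lambda/(\lambda+1)]$. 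Hence $\phi$ is continuous and strictly increasing, with range strictly inside $(0,\lambda/(\lambda+1)]$ for any finite argument. Existence of a fixed point is immediate from the intermediate value theorem, since $\phi(0) = G(\lambda, n\lambda'/(\lambda'+1)) > 0$ and $\phi(\lambda/(\lambda+1)) < \lambda/(\lambda+1)$.

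The crux is uniqueness, for which my plan is to work in logarithmic coordinates. The key analytic lemma I would establish is the inequality
\[
\beta\, V(\lambda,\beta) < G(\lambda,\beta), \qquad \beta > 0,
\]
i.e.\ the \emph{elasticity} $\beta G_\beta/G = -\beta V/G$ of $G$ with respect to $\beta$ lies strictly in $(-1, 0)$. Setting $r := \gamma(\lambda+1,\beta)/\gamma(\lambda,\beta)$ and $s := \gamma(\lambda+2,\beta)/\gamma(\lambda,\beta)$, Lemma~\ref{lem:qkhb}(ii) gives $G = (\lambda/\beta)r$ and the analogous derivation for the fourth moment gives $E_{f_{k,h,\beta}}[u^{4k}] = \lambda(\lambda+1)s/\beta^2$. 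Two applications of Remark~\ref{rem:incomp_gamma} yield $s = r - (\beta/(\lambda+1))(1-r)$, and a short algebraic simplification produces the clean factorization
\[
G(\lambda,\beta) - \beta\, V(\lambda,\beta) = \frac{\lambda}{\beta}\,(1-r)(\beta - \lambda r),
\]
which is strictly positive because $r \in (0,1)$ and $\lambda r = \beta G(\lambda, \beta) \le \beta\lambda/(\lambda+1) < \beta$.

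With the elasticity bound in place, the chain rule shows that the elasticity of $\phi$ with respect to $x$ factors as a product of two quantities in $(-1, 0)$ and therefore lies in $(0, 1)$. Equivalently, $\tilde\phi(\gamma) := \log \phi(e^\gamma)$ satisfies $\tilde\phi'(\gamma) \in (0, 1)$ for all $\gamma \in \mathbb{R}$; two distinct fixed points of $\tilde\phi$ would force $\tilde\phi'(\xi) = 1$ at some intermediate $\xi$ by the mean value theorem, a contradiction. So $\phi$ has a unique positive fixed point $x^*$, which lies in the stated interval. Global attractivity is then a standard monotone-dynamics conclusion: strict monotonicity of $\phi$ together with uniqueness of $x^*$ force $\phi - \mathrm{id}$ to be strictly positive on $[0, x^*)$ and strictly negative on $(x^*, \infty)$, so the orbit from any $x_0 \ne x^*$ is monotone, trapped by $x^*$, and converges to $x^*$.

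For the symmetric case $\lambda = \lambda'$, the unique positive fixed point of $\phi$ must coincide with the diagonal solution of $x = G(\lambda, nx)$ (which is visibly a fixed point of $\phi$); multiplying by $nx^*$ gives $n(x^*)^2 = H(\lambda, nx^*) < \lambda$, where $H(\lambda,\beta) := \beta G(\lambda,\beta)$ is bounded above by $\lambda$ (since $G < \lambda/\beta$), so $x^* < \sqrt{\lambda/n}$ as claimed. The principal obstacle is the elasticity inequality $\beta V < G$; once its clean factorization through $(1-r)(\beta - \lambda r)$ is spotted, the rest of the argument is monotonicity and intermediate-value bookkeeping.
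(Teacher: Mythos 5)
Your proposal is correct, and it takes a genuinely different and noticeably more complete route than the paper. The paper's argument for the asymmetric case applies the intermediate value theorem to $F(x)=G(\lambda_1,nG(\lambda_2,nx))-x$ on $\Lambda_1$ and observes that no fixed points exist outside $\Lambda_1$; this gives existence, but it does not by itself establish uniqueness \emph{within} $\Lambda_1$ or global attractivity, both of which the lemma asserts. For the symmetric case the paper invokes auxiliary lemmas about periodic points of monotone maps, one of which (a strictly monotone map on $(0,\infty)$ has no periodic points of period $p>1$) is false for strictly \emph{decreasing} maps such as $F(x)=G(\lambda,nx)$ — so the paper never actually rules out a $2$-cycle of $F$, which is precisely what must be excluded. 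Your elasticity lemma, namely $\beta V(\lambda,\beta) < G(\lambda,\beta)$ obtained through the incomplete-gamma recurrence and the factorization $G - \beta V = (\lambda/\beta)(1-r)(\beta-\lambda r)$ with $r=\gamma(\lambda+1,\beta)/\gamma(\lambda,\beta)\in(0,1)$, pins the elasticity of $G(\lambda,\cdot)$ strictly inside $(-1,0)$; by the chain rule the composed map $\phi(x)=G(\lambda,nG(\lambda',nx))$ then has elasticity in $(0,1)$, i.e.\ it is a contraction in logarithmic coordinates. This handles all three cases uniformly, delivers uniqueness and global attractivity in one stroke, and simultaneously rules out $2$-cycles of $F$ in the symmetric case, thereby closing the gaps in the paper's argument. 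The paper's route is shorter to state; yours is self-contained, rigorous, and exposes the exponential-family variance structure of $G$ that drives the monotone dynamics.
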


Defining
\be\label{eq:CAVI_max}
\rho^\star = \rho_1^\star \otimes \rho_2^\star, \quad \rho_i^\star = f_{k_i, h_i, n \mu_i^\star}, \quad i = 1, 2,
\ee
it follows from the above result in conjunction with Scheffe's theorem that the CAVI updates in the density space \eqref{eq:cavi_t} converge to $\rho^\star$ in the total variation norm irrespective of the initialization, i.e., $\lim_{t \to \infty} \|\rho^{(t)} - \rho^\star\|_{\TV} = 0$. Importantly, we can now argue that $\rho^\star$ is a global maximizer of $\Psi_n$, and hence $\ELBO(\m F_\MF) = \Psi_n(\rho^\star)$. To see why this is true, assume in the contrary that there exists $\bar{\rho} = \bar{\rho}_1 \otimes \bar{\rho}_2$ with $\Psi_n(\bar{\rho}) > \Psi_n(\rho^\star)$. Initialize the CAVI updates \eqref{eq:cavi_t} with $\bar{\rho}_1$ and $\bar{\rho}_2$. Since the updates converge to $\rho^\star$, invoke equation \eqref{eq:cavi_gen} to conclude that $\Psi_n(\rho^\star) \ge \Psi_n(\bar{\rho})$, arriving at a contradiction. 

It is evident that $(\mu_1^\star, \mu_2^\star)$ (and hence $\rho^\star$) depend on $n$, although we have not attempted to characterize their dependence on $n$ as yet. Since we now know the identity of a global maximizer of $\Psi_n$ for any $n \in \mb N$, we proceed to characterize the order of the fixed points $\mu_1^\star$ and $\mu_2^\star$ as a function of $n$ in Lemma \ref{lemma:fp_order} below. 

\begin{lemma}\label{lemma:fp_order} 
Let $0<\lambda_1 \le \lambda_2<\infty$. The fixed points of the system \eqref{eq:cavi_t} satisfy $\mu_1^\star \mu_2^\star \asymp 1/n$. Moreover, 
\begin{enumerate}
    \item if $\lambda_1<\lambda_2$, then $\mu_1^\star=O(1)$ and $\mu_2^\star\asymp 1\slash n$. 
    \item if $\lambda_1=\lambda_2$, then $\mu_1^\star=\mu_2^\star$ and both fixed points are of order $1/\sqrt{n}$. 
\end{enumerate}
\end{lemma}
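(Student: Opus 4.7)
The plan is to analyze the fixed point relations $\mu_1^\star = G(\lambda_2, n\mu_2^\star)$ and $\mu_2^\star = G(\lambda_1, n\mu_1^\star)$ using two complementary properties of $G$: the strict inequality $G(\lambda, \beta) < \lambda/\beta$ valid for every $\beta > 0$, and the asymptotic equivalence $G(\lambda, \beta) \sim \lambda/\beta$ as $\beta \to \infty$ from Lemma \ref{lem:qkhb}(iii). The strict inequality follows immediately by writing $G(\lambda, \beta) = (\lambda/\beta) \cdot [\gamma(\lambda+1, \beta)/\gamma(\lambda, \beta)]$ and invoking the recursion $\gamma(\lambda+1, \beta) = \gamma(\lambda, \beta) - \beta^\lambda e^{-\beta}/\Gamma(\lambda+1)$ from Remark \ref{rem:incomp_gamma}, which shows that the ratio is strictly below one.

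Multiplying the two fixed point equations and applying this strict inequality gives $\mu_1^\star \mu_2^\star = G(\lambda_2, n\mu_2^\star) \, G(\lambda_1, n\mu_1^\star) < \lambda_1 \lambda_2/[n^2 \mu_1^\star \mu_2^\star]$, hence $\mu_1^\star \mu_2^\star < \sqrt{\lambda_1 \lambda_2}/n$, delivering the product upper bound. The matching lower bound on the product will then follow once individual orders of the $\mu_i^\star$ are pinned down in each of the two cases.

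For Case 1 ($\lambda_1 < \lambda_2$), the central claim is that $\mu_1^\star$ stays bounded away from zero uniformly in $n$, which I would establish by contradiction along any subsequence on which $\mu_1^\star \to 0$. The contradiction splits into two subcases based on whether $n\mu_1^\star$ remains bounded or diverges. In the bounded subcase, $\mu_2^\star = G(\lambda_1, n\mu_1^\star)$ is itself bounded below by a positive constant by continuity of $G(\lambda_1, \cdot)$, forcing $n\mu_2^\star \to \infty$; the asymptotic equivalence then yields $\mu_1^\star \sim \lambda_2/(n\mu_2^\star)$ and hence $n\mu_1^\star \sim \lambda_2/\mu_2^\star$, so the fixed point equation collapses in the limit to $\mu_2^\star = G(\lambda_1, \lambda_2/\mu_2^\star)$. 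But the strict inequality forces $G(\lambda_1, \lambda_2/\mu_2^\star) < (\lambda_1/\lambda_2) \mu_2^\star < \mu_2^\star$ since $\lambda_1 < \lambda_2$, a contradiction. In the diverging subcase, both $n\mu_1^\star, n\mu_2^\star \to \infty$, the asymptotic $G(\lambda, \beta) \sim \lambda/\beta$ applies on both sides, and substituting one equation into the other yields $\mu_1^\star \sim (\lambda_2/\lambda_1)\mu_1^\star$, again contradicting $\lambda_1 \neq \lambda_2$. Once $\mu_1^\star$ is bounded below, $n\mu_1^\star \to \infty$ and Lemma \ref{lem:qkhb}(iii) gives $\mu_2^\star \sim \lambda_1/(n\mu_1^\star) \asymp 1/n$, completing this case.

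For Case 2 ($\lambda_1 = \lambda_2 = \lambda$), uniqueness of the fixed point of $x \mapsto G(\lambda, nG(\lambda, nx))$ from Lemma \ref{lemma:fp}, combined with the symmetry of the decoupled system \eqref{eq:sys_decouple}, forces $\mu_1^\star = \mu_2^\star =: \mu^\star$ satisfying the scalar equation $\mu^\star = G(\lambda, n\mu^\star)$. The strict inequality gives $(\mu^\star)^2 < \lambda/n$; were $n\mu^\star = O(1)$, continuity would bound $\mu^\star$ below, contradicting $\mu^\star \leq \sqrt{\lambda/n} \to 0$. So $n\mu^\star \to \infty$ and the asymptotic equivalence delivers $\mu^\star \sim \sqrt{\lambda/n}$. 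The main obstacle I anticipate is the subcase analysis in Case 1: the strict-versus-asymptotic dichotomy for $G$ is precisely what makes each branch cleanly contradictory, and careful handling along arbitrary subsequences in $n$ is needed to rule out oscillatory behavior, since the argument only bounds the product $\mu_1^\star \mu_2^\star$ directly and the individual orders must be extracted by ruling out degenerate scalings.
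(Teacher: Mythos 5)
Your proof is correct and reaches the same conclusion as the paper, but it organizes the argument more cleanly. The paper posits an ansatz $\mu_1^\star = c_1/f(n)$ with $c_1$ a constant free of $n$ and then case-splits on the limiting behavior of $f(n)/n$ (four cases: $\to\infty$, $\to 1$, $\to 0$ with $f\to\infty$, and $f$ bounded), ruling out all but the last via informal ``$\approx$'' substitutions. This implicitly assumes $\mu_1^\star$ has a single power-law scaling form. You instead establish the one-sided product bound $\mu_1^\star\mu_2^\star < \sqrt{\lambda_1\lambda_2}/n$ cleanly from the strict inequality $G(\lambda,\beta)<\lambda/\beta$, then rule out $\mu_1^\star\to 0$ along any subsequence by splitting on whether $n\mu_1^\star$ stays bounded or diverges; each branch produces a contradiction from the same strict inequality, and passing to a convergent subsequence handles the oscillation worry you correctly flag. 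In fact your limiting equation $\mu_2^\infty = G(\lambda_1,\lambda_2/\mu_2^\infty)$, rewritten with $z=\lambda_2/\mu_2^\infty$, is exactly the paper's constraining equation $zG(\lambda_1,z)=\lambda_2$; your strict-inequality one-liner $zG(\lambda_1,z)<\lambda_1\le\lambda_2$ is a tighter way to see it has no solution than the paper's Lemma \ref{lemma:constraint.eq.1}. The symmetric case is handled identically. Net, your version is more rigorous on the subsequence bookkeeping and buys a shorter contradiction, at the cost of being slightly less constructive about identifying the limiting constants $c_1,c_2$, which the paper's solvable-equation framing makes explicit.
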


Lemma \ref{lemma:fp_order} shows that the product of $\mu_1^\star$ and $\mu_2^\star$ always decays in the order of $1/n$. In the symmetric case when $\lambda_1 = \lambda_2$, this is achieved by equally splitting the order between the two, while in the asymmetric case, the other extreme is observed. In the asymmetric case, $n \mu_1^\star \asymp n$ and $n \mu_2^\star \asymp 1$, and empirically observing this for various choices of $\lambda_1$ and $\lambda_2$ originally motivated our choice of $\wt{\rho}$ in the proof of Theorem \ref{thm:VB_main}. Moreover, as we shall see below, even though the choice of $\wt{\rho}$ was not optimal in the symmetric case, it did provide the best possible order of the ELBO. 

\begin{figure}[htbp!]
    \centering
    \includegraphics[scale=0.3]{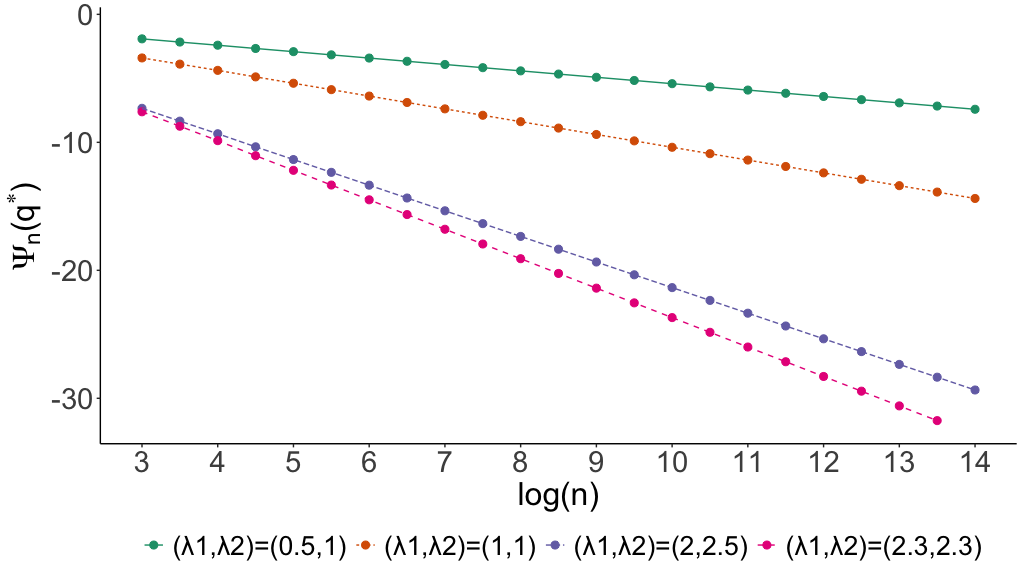}
    \caption{Optimized value of the ELBO for the example in Eq.\,\eqref{eq:special_2d} as a function of $\log n$ for different combinations of $(\lambda_1, \lambda_2)$. }
    \label{fig:scavi_plot2}
\end{figure}

With all ingredients in place, we are now in a position to establish sharpness of our lower bound in Theorem \ref{thm:VB_main}. 
\begin{thm}\label{thm:mf_order}
Consider the normal form of a two parameter singular model with $\mr{h}=(h_1,h_2)$ and $\mr{k}=(k_1,k_2)$. The mean field approximation to the evidence lower bound of normal form is
$$\ELBO(\mathcal{F}_{\MF})=  -\lambda \log n + C$$
for some constant $C$, where $\lambda$ is the RLCT.  
\end{thm}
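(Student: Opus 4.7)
The plan is to pin down the value of $\ELBO(\mathcal{F}_{\MF})$ exactly via the explicit fixed point characterization. The argument preceding the statement already shows that $\rho^\star = \rho_1^\star \otimes \rho_2^\star$ in \eqref{eq:CAVI_max} is a global maximizer of $\Psi_n$ over $\mathcal{F}_{\MF}$, so $\ELBO(\mathcal{F}_{\MF}) = \Psi_n(\rho^\star)$. Since the lower bound $\Psi_n(\rho^\star) \ge -\lambda \log n - C$ is already available via Theorem \ref{thm:VB_main}, my only task is to obtain the matching upper bound $\Psi_n(\rho^\star) \le -\lambda \log n + C'$.

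The first step is an algebraic simplification of \eqref{eq:elbo_t} at the fixed point. Since $\mu_1^\star = G(\lambda_2, n\mu_2^\star)$ and $\mu_2^\star = G(\lambda_1, n\mu_1^\star)$, substitution collapses the first three terms of \eqref{eq:elbo_t} to
\begin{equation*}
-n\mu_1^\star \mu_2^\star + n\mu_1^\star \mu_2^\star + n\mu_2^\star \mu_1^\star \;=\; n\mu_1^\star \mu_2^\star,
\end{equation*}
which by Lemma \ref{lemma:fp_order} is $O(1)$ in both the asymmetric and symmetric cases. Hence the large-$n$ behavior of $\Psi_n(\rho^\star)$ is driven entirely by $\log B(k_1,h_1,n\mu_1^\star) + \log B(k_2,h_2,n\mu_2^\star)$.

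The second step uses the exact expression $B(k,h,\beta) = \beta^{-\lambda}\Gamma(\lambda)\gamma(\lambda,\beta)/(2k)$ from Lemma \ref{lem:qkhb}(i), which gives
\begin{equation*}
\log B(k,h,\beta) = -\lambda \log \beta + \log \Gamma(\lambda) - \log(2k) + \log \gamma(\lambda,\beta),
\end{equation*}
and in particular $\log B(k,h,\beta) = -\lambda \log \beta + O(1)$ whenever $\beta$ stays bounded away from $0$, using $\gamma(\lambda,\beta) \in (0,1)$ with $\gamma(\lambda,\beta) \to 1$ as $\beta \to \infty$ from Remark \ref{rem:incomp_gamma}. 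I then split into two cases guided by Lemma \ref{lemma:fp_order}. In the asymmetric case $\lambda_1 < \lambda_2$, we have $n\mu_2^\star$ bounded in a compact interval $[c_1,c_2] \subset (0,\infty)$, so $\log B(k_2,h_2,n\mu_2^\star)$ is $O(1)$ by continuity, while $\mu_1^\star$ itself is bounded above and below by positive constants (since $\mu_1^\star = G(\lambda_2, n\mu_2^\star)$ is a continuous positive function evaluated on a compact set), giving $\log B(k_1,h_1,n\mu_1^\star) = -\lambda_1 \log n + O(1) = -\lambda \log n + O(1)$. In the symmetric case $\lambda_1 = \lambda_2 = \lambda$, we have $n\mu_i^\star \asymp \sqrt{n}$ for $i=1,2$, so each $\log B$ term contributes $-\lambda \log \sqrt{n} + O(1) = -(\lambda/2) \log n + O(1)$, summing again to $-\lambda \log n + O(1)$.

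The main obstacle, though fairly mild, is ensuring the $O(1)$ bounds are truly two-sided. This requires coupling the $\asymp$ statements of Lemma \ref{lemma:fp_order} (which provide matching upper and lower constants) with the exact formula in Lemma \ref{lem:qkhb}(i) rather than the weaker asymptotic ratio in Lemma \ref{lem:qkhb}(iii); the latter would only yield a $-\lambda \log n + o(\log n)$ remainder. Provided we carry the explicit incomplete gamma function representation through the case analysis, all $o(1)$ and $O(1)$ terms are controlled, and combining with the lower bound from Theorem \ref{thm:VB_main} delivers $\ELBO(\mathcal{F}_{\MF}) = -\lambda \log n + C$.
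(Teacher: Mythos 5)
Your proposal is correct and follows essentially the same route as the paper: identify $\ELBO(\mathcal{F}_{\MF})=\Psi_n(\rho^\star)$, plug the fixed-point orders from Lemma \ref{lemma:fp_order} into Eq.\,\eqref{eq:elbo_t}, and control the $\log B$ terms via Lemma \ref{lem:qkhb}. The two worthwhile additions you make explicit --- the algebraic collapse of the first three ELBO terms to $n\mu_1^\star\mu_2^\star$ at the fixed point, and the observation that the exact incomplete-gamma formula of Lemma \ref{lem:qkhb}(i) (not the asymptotic ratio in part (iii)) is needed to get an $O(1)$ rather than $o(\log n)$ remainder --- are both accurate and fill in detail the paper's one-line proof leaves implicit.
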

\begin{proof}
Recall that $\ELBO(\m F_\MF) = \Psi_n(\rho^\star) = \lim_{t \to \infty} \Psi_n(\rho_t)$ with the expression for $\Psi_n(\cdot)$ provided in Eq.\,\eqref{eq:elbo_t}. From Lemma \ref{lemma:fp_order}, we know that $\rho^\star$ corresponds to $\mu_1^\star=O(1)$ \& $\mu_2^\star\asymp 1\slash n$ in the asymmetric case, and $\mu_1^\star = \mu_2^\star \asymp 1/\sqrt{n}$ in the symmetric case. Substituting these values separately in the expression for $\Psi_n(\cdot)$ and using the bounds from Lemma \ref{lem:qkhb} delivers the desired result. 
\end{proof}

Figure \ref{fig:scavi_plot2} empirically confirms the conclusion of Theorem \ref{thm:mf_order} by plotting $\Psi_n(\rho^\star)$ as a function of $\log n$ for different combinations of $(\lambda_1, \lambda_2)$. Specifically, for each of the four pairs of $(\lambda_1, \lambda_2)$, we vary $n$ over a fixed grid of values in the logarithmic scale. For each value of $n$, we run the dynamical system in Eq.\,\eqref{eq:sys_decouple} to convergence, and plot the corresponding converged value of the ELBO from Eq.\,\eqref{eq:elbo_t} against $\log n$. In each case, the points almost exactly line up with a slope of $- \min\{\lambda_1, \lambda_2\}$. 

In Appendix \ref{app:ex27}, we revisit the one-layered neural network example and obtain the coordinate ascent updates both in the original $(\theta_1, \theta_2)$ coordinates and in the transformed $(\xi_1, \xi_2)$ coordinates that render the likelihood into a normal form.  Refer to Eq.\,\eqref{eq:cavi_original} and Eq.\,\eqref{eq:cavi_transformed} and their derivations in Appendix \ref{app:ex27}. The plot of the evidence lower bound in the transformed coordinates (Eq.\,\ref{eq:elbo_transformed}) as a function of sample size $n$ is shown in Figure \ref{fig:scavi_nn}. Recall that $\lambda_1 = \lambda_2 = 1/2$ in this example. The slope of the fitted line is $-1/2$, thus delivering the correct order of the evidence lower bound and once again confirming the conclusion of Theorem \ref{thm:mf_order}. We note here that the CAVI updates in the original untransformed parameterization did not produce the correct order of the ELBO in this example. 

In this example, and in general, the transformation to normal form requires explicit knowledge of the RLCT, which is only known in a handful of settings as indicated earlier. As many singular models have unknown RLCT, it would be worthwhile to develop a general class of transformation-based variational families where we can learn the necessary transformation of the parameters to the normal form before assuming a mean-field structure in the transformed parameterization.


\begin{figure}[htbp!]
    \centering
    \includegraphics[scale=0.5]{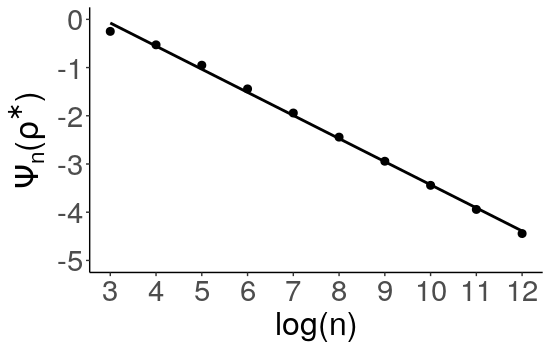}
    \caption{The ELBO as a function of $\log n$ in the one-layered neural network example. }
    \label{fig:scavi_nn}
\end{figure}

\appendix

\section{Remaining proofs from Section 2}\label{app}

\subsection{Proof of Lemma \ref{lem:stoc_order}}
For $G \sim \mbox{Gamma}(\alpha, \lambda)$ and any $t > 0$, 
\be 
P(G \le t) = \frac{\lambda^\alpha}{\Gamma(\alpha)} \int_0^t e^{-\lambda x} x^{\alpha - 1} dx = \frac{1}{\Gamma(\alpha)} \int_0^{\lambda t} e^{-x} x^{\alpha - 1} dx. 
\ee
is an increasing function of $\lambda$ (for fixed $\alpha$ and $t$). Thus, if $Z_i \sim \mbox{Gamma}(\alpha, \lambda_i)$ for $i = 1, 2$ with $\lambda_1 > \lambda_2$, then $Z_1 <_{\st} Z_2$. 

The proof then follows from the fact that if $Z_1, Z_2, Z_3$ are non-negative random variables with $Z_1 <_{\st} Z_2$, then $Z_1 + Z_3 <_{\st} Z_2 + Z_3$.

\subsection{Proof of Corollary 2.1}
Since $b(\cdot)$ is analytic on $U$ containing $\Omega$, we have, for any $\xi \in \Omega$ that 
\be 
b(\xi) = b(\mathbf{0}) + \sum_{|\alpha| \ge 1} \frac{\partial^\alpha b(\mathbf{0})}{\alpha !} \, \xi^{\alpha},
\ee
where $\alpha = (\alpha_1, \ldots, \alpha_d)$ is a multi-index with $|\alpha| = \sum_{j-1}^d \alpha_j$, $\partial^\alpha b = \partial^{\alpha_1} \ldots \partial^{\alpha_d} b$, and $\alpha ! = \alpha_1 ! \ldots \alpha_d !$. Now use the dominated convergence theorem to interchange the integral and sum, and observe that the constant term provides the dominating order. 

\subsection{Proof of Proposition \ref{prop:weak}}
We show that $W_n(\xi)$ converges weakly to the Gaussian process $W^*$. By Theorem 1.5.7 in \cite{van1996weak} it suffices to show the marginal weak convergence and asymptotic tightness of $W_n(\xi)$.  We begin with the convergence of the marginals. For $\xi_1,\ldots,\xi_L\in [0,1]^d$ with integer $L>0$. Applying the multivariate central limit theorem, as $n\to \infty$,
\begin{align*}
    (W_n(\xi_1),\ldots, W_n(\xi_L))\to N(0,C)
\end{align*}
where $C=(c_w(\xi_i,\xi_j))_{1\leq i,j\leq L}$.  Next we show the asymptotic tightness of $W_n(\xi)$ by proving the following three sufficient conditions.  First $[0,1]^d$ is totally bounded. The second condition is the tightness of $W_n(\xi_0)$ for a fixed $\xi_0$. Fix $\xi_0\in [0,1]^d$, for $\epsilon >0$. We need to show that there exists a compact set $K$, such that $\mathbb{P}\{W_n(\xi_0)\in K\}>1-\epsilon$. We construct $K=\{\vert W_n(\xi_0)\vert \leq t\}$ with $t$ chosen as follows. By {\bf Assumption A1}, $\tilde{Z}_i(\xi_0)$ are independent centered sub-Gaussian and by Theorem 2.6.2  of \cite{vershyninhigh}
\begin{align*}
    \mathbb{P}(\vert W_n(\xi_0) \vert \geq t) \leq \mathbb{P}\left(\Big\vert \sum_{i=1}^n\tilde{Z}_i(\xi_0)\Big\vert \geq \sqrt{n}t\right)\leq 2\exp(-ct^2)
\end{align*}
for some constant $c>0$. Choosing $t=\sqrt{2\log(1\slash \epsilon)}$ completes the proof of tightness. 

The third condition is that $W_n(\xi)$ is asymptotically uniformly $d$-equicontinuous, where $d(\xi,\zeta)=\Vert \xi-\zeta \Vert$ is the metric generated by the norm in {\bf Assumption A1}. $W_n(\xi)$ is said to be asymptotically uniformly $d$-equicontinuous if for any $\eta,\epsilon>0$, there exists a $\delta>0$ such that 
\begin{align*}
    \mathbb{P}\left\{ \sup_{d(\xi,\zeta)<\delta} \vert W_n(\xi) -W_n(\zeta)\vert >\epsilon \right\} <\eta. 
\end{align*}
To that end, 
\begin{align*}
    \sup_{d(\xi,\zeta)<\delta}\vert W_n(\xi) -W_n(\zeta)\vert \leq \frac{1}{\sqrt{n}}\sum_{i=1}^n\vert\tilde{Z}_i(\xi)-\tilde{Z}_i(\zeta)\vert \leq   \frac{\delta}{\sqrt{n}}\sum_{i=1}^nL(X_i)
\end{align*}
and 

\begin{align*}
    \mathbb{P}\Big\{ \sup_{d(\xi,\zeta)<\delta} \vert W_n(\xi) -W_n(\zeta)\vert > \epsilon\Big\} & \leq \mathbb{P}\Big\{ \sum_{i=1}^nL(X_i) > \frac{\sqrt{n}\epsilon}{\delta}\Big \} \\ &= 
  \mathbb{P}\Big[ \exp\Big\{t\sum_{i=1}^nL(X_i)\Big\} > \exp \Big(t \frac{\sqrt{n}\epsilon}{\delta}\Big)\Big] \\
    & \leq \exp\big\{-t \sqrt{n}\epsilon/\delta + n t c_L^2/2\big\}
\end{align*}
for any $t > 0$, where the final inequality follows from Markov's and {\bf Assumption A1}.   Setting $t = \epsilon /(\delta \sqrt{n} c_L^2)$, we obtain 
\begin{align*}
    \mathbb{P}\Big\{ \sup_{d(\xi,\zeta)<\delta} \vert W_n(\xi) -W_n(\zeta)\vert > \epsilon\Big\} \leq  e^{- \epsilon^2/(2 c_L^2 \delta^2)}. 
\end{align*}
Choosing $\delta= \epsilon/(c_L\sqrt{2\log (1/\eta)})$ completes the proof of asymptotically uniformly $d$-equicontinuous. 
Therefore the conditions of Theorem 1.5.7 in \cite{van1996weak}  are met and $W_n(\xi)$ converges weakly to a Gaussian process. 

\subsection{Proof of Proposition \ref{lem:Dnt_conv}}
We shall prove only the second part; the proof of the first part is very similar and is omitted. 
We use the notation
\be
G(t, W) := t^{\lambda -1} e^{-t - \sqrt{t}W}. 
\ee
Observe that $\int_0^n |D_n(t)  - D(t) |dt $ is bounded above by 
\be
 &\int_{0}^n \int |G(t, W^*({\bf 0}, \xi_J))  -  G(t, W^*(\xi_I, \xi_J)) |  \varphi_{\xi \mid Z= -\log (t/n)} (\xi_I)  \varphi(\xi_J) d\xi dt+ \nonumber \\  
& \int_{0}^n \int |G(t, W_n(\xi_I, \xi_J))  -  G(t, W^*(\xi_I, \xi_J)) |  \varphi_{\xi \mid Z= -\log (t/n)} (\xi_I)  \varphi(\xi_J) d\xi dt\label{eq:diff}
\ee
To control the first term, for given any $\epsilon> 0$, there exists $\delta > 0$, such that  $\sup_{\{\|\xi_I\| < \delta \}} |e^{-\sqrt{t} W^*({\bf 0}, \xi_J)}  -  e^{-\sqrt{t} W^*(\xi_I, \xi_J)} | < \epsilon$.  Then the first term is less than 
\be
\epsilon + 2 \int_{0}^n t^{\lambda -1}e^{-t + \sqrt{t} \|W^* \|_\infty} \Big[ \int_{\{\|\xi_I\| \geq \delta \}}  \varphi_{\xi \mid Z= -\log (t/n)} (\xi_I)  d\xi_I  \Big] dt \label{eq:diff}
\ee
Observe that the second term in the r.h.s of \eqref{eq:diff}  is bounded above by 
\be
(m-1) \bbP \{\xi_1 > \delta /\sqrt{m-1} \mid Z= -\log (t/n)\}.
\ee
The one dimensional marginal $\xi_1 \mid Z$  of the conditional density \eqref{eq:cond} is given by 
\be
\varphi_{\xi_1 \mid Z}(\xi_1) = \frac{2 k_1}{\xi_1 Z} , \quad e^{-Z}\leq \xi_1^{2k_1} \leq 1.  
\ee
Note that the sequence of random variables 
$f_n(\xi_1) = \ind (\xi_1 > \delta /\sqrt{m-1})  \varphi_{\xi_1 \mid Z= -\log (t/n)} (\xi_1)$ converges to zero and is bounded above by the integrable function $\varphi_{\xi_1 \mid Z= -\log (t/n)} (\xi_1)$, hence an application of the dominated convergence theorem shows $\int f_n(\xi_1) d\xi_1$ converges to 0.  Another application of DCT shows that the second term in \eqref{eq:diff} converges to 0. 

The second term of \eqref{eq:diff} can be bounded above by 
\be
\int_{0}^\infty t^{\lambda -1} e^{-t} \int |e^{\sqrt{t}W_n(\xi_I, \xi_J)}  - e^{\sqrt{t}W^*(\xi_I, \xi_J)} |  \varphi_{\xi \mid Z= -\log (t/n)} (\xi_I) \varphi(\xi_J)  d\xi
\ee
Since $W_n \stackrel{w}{\rightarrow} W^*$, by continuous mapping,  the above converges weakly to $0$. 
Finally, 
\be
\int_n^\infty D(t) dt &= \int_{t=n}^\infty t^{\lambda -1} e^{-t - \sqrt{t}W^*({\bf 0}, \xi_J)}  \varphi_{\xi \mid Z= -\log (t/n)} (\xi_I) \varphi(\xi_J)  d\xi_I dt \\
&\leq \int_{t=n}^\infty t^{\lambda -1} e^{-t + \sqrt{t} \|W^*\|_\infty} dt 
\ee
which converges to $0$, concluding the proof. 

\section{Remaining proofs from Section 3}\label{app2}

\subsection{Proof of Lemma \ref{lem:qkhb}}
Part (i) follows from a change of variable $v = \beta u^{2k}$. For part (ii), we have, using the definition of $B(k,h, \beta)$,
\be 
G(\lambda,\beta) = \frac{B(k,2k+h,\beta)}{B(k,h,\beta)}  = \frac{ (2k)^{-1} \beta^{-(\lambda+1)} \Gamma(\lambda+1) \gamma(\lambda+1,\beta) }{ (2k)^{-1} \beta^{-(\lambda)} \Gamma(\lambda) \gamma(\lambda,\beta) } = \frac{\lambda}{\beta} \, \frac{\gamma(\lambda+1,\beta)}{\gamma(\lambda, \beta)}.
\ee
For the first part of part (iii), we have $\log B(k, h, \beta) = - \lambda \log \beta + \log \gamma(\lambda, \beta) + \text{terms free of } \beta$. The conclusion follows since $\lim_{\beta \to \infty} \gamma(\lambda, \beta) = 1$. 

For the second part of part (iii), use Remark \ref{rem:incomp_gamma} to write 
\be 
G(\lambda, \beta) =\frac{\lambda}{\beta}\left(1-\frac{\beta^\lambda e^{-\beta}}{\Gamma(\lambda+1)\gamma(\lambda,\beta)}\right). 
\ee
From the above, the conclusion follows since $\lim_{\beta \to \infty} \beta^\lambda e^{-\beta} = 0$ and $\lim_{\beta \to \infty} \gamma(\lambda, \beta) = 1$. 

\subsection{Proof of Lemma \ref{lemma:fp}}

\noindent {\bf Asymmetric case ($\lambda_1 < \lambda_2$). \,} We prove the first statement, the proof of the second is similar. Consider the function $F(x)=G(\lambda_1,nG(\lambda_2,nx))-x$. We have $F(0)=  G(\lambda_1,nG(\lambda_2,0))> 0$ and $F(\lambda_1\slash(\lambda_1+1))<0$ since $G(\lambda_1,nG(\lambda_2,nx))<\lambda_1\slash(\lambda_1+1)$.  
Applying the intermediate value theorem to the function $F$ on the interval $\Lambda_1$ gives the result. 
Notice for $x > \lambda_1\slash(\lambda_1+1)$ the function $G(\lambda_1,nG(\lambda_2,nx))-x<0$ and cannot have a root in $(0,\infty)\setminus \Lambda_1$. 

\noindent {\bf Symmetric case ($\lambda_1 = \lambda_2$). \,} We first build state some preparatory results. 

\begin{lemma}
Suppose $f:(0,\infty) \to (0,\infty)$ is strictly monotone on $(0,\infty)$, then $f$ has no periodic points of period $p>1$.
\end{lemma}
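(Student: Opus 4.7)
The plan is to argue by contradiction using the fact that strict monotonicity forces the forward orbit of any point to be a monotone sequence, which can never close up to give a period larger than one. The key object is the orbit $x_j := f^j(x_0)$ for $j = 0, 1, 2, \ldots$, and the comparison between $x_1$ and $x_0$.

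The main case is $f$ strictly increasing (which is the case relevant for the symmetric branch of Lemma \ref{lemma:fp}, since $x \mapsto G(\lambda,nG(\lambda,nx))$ is the composition of two strictly decreasing maps and hence strictly increasing). Suppose for contradiction that $x_0$ is a periodic point with minimal period $p \ge 2$, and let $x_1 = f(x_0)$. If $x_1 > x_0$, strict monotonicity of $f$ gives $x_2 = f(x_1) > f(x_0) = x_1$, and a trivial induction produces a strictly increasing orbit $x_0 < x_1 < x_2 < \cdots < x_p$, contradicting $x_p = f^p(x_0) = x_0$. The case $x_1 < x_0$ is symmetric and yields a strictly decreasing orbit, again a contradiction. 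Hence $x_1 = x_0$, so $x_0$ is a fixed point and the period equals $1$, not $p$.

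The only point to be careful about, and what I would flag as the main subtlety, is the strictly decreasing case: there the statement as written is \emph{false} in general (e.g.\ $f(x) = 1/x$ on $(0,\infty)$ has every point except $x = 1$ as a period-$2$ point). So the lemma must be read as applying to strictly increasing $f$, or equivalently, one observes that in the intended application the map is a composition of two strictly decreasing functions and is therefore strictly increasing. With that reading, the contradiction argument above suffices and rules out period-$p > 1$ cycles, which is what is needed in the symmetric case of Lemma \ref{lemma:fp} to reduce the analysis of $f = g\circ g$ to the study of its fixed points.
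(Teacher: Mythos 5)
You are being asked to prove a lemma that the paper states but does not prove, so there is no proof in the paper to compare against. Your argument for the strictly increasing case is correct and complete: if $x_1 = f(x_0) > x_0$ then strict monotonicity propagates the inequality to $x_{k+1} > x_k$ for all $k$, so the orbit is strictly increasing and cannot return to $x_0$; the case $x_1 < x_0$ is symmetric; hence $x_1 = x_0$ and $x_0$ is a fixed point.

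More importantly, you are right that the lemma \emph{as stated} is false, and your counterexample $f(x) = 1/x$ on $(0,\infty)$ (every $x \neq 1$ is a period-$2$ point of a strictly decreasing bijection) is correct. This is a genuine error in the paper. Note in addition that the patch you propose does not fully close the gap in the surrounding argument: the paper invokes this lemma to conclude that $F(x) = G(\lambda, nx)$, which is strictly \emph{decreasing}, has no periodic points of period $p > 1$, and this is exactly the case your counterexample shows the lemma cannot cover. Reading the lemma as a statement about the strictly increasing composition $H = F \circ F$ only rules out period-$p$ cycles of $H$ for $p > 1$; it does not by itself exclude $2$-cycles of $F$, which would appear as additional fixed points of $H$ beyond the one guaranteed by Lemma \ref{lemma:dynamics.G}. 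So either the lemma must be restricted to strictly increasing $f$ and a separate argument supplied that $F$ has no $2$-cycles (e.g.\ via monotonicity of $\beta \mapsto \beta G(\lambda,\beta)$), or the statement needs to be replaced entirely. Your instinct to flag the decreasing case as the main subtlety is exactly right.
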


\begin{lemma}\label{lemma:dynamic.constraint}
The discrete dynamical system defined by 
\begin{align}
\begin{split}
    \mu_1(k)&=f^{2k-1}(\mu_0)\\
    \mu_2(k)&=f^{2k}(\mu_0)
\end{split}
\end{align}
can only converge to a fixed point of $x_0$ of $f$ or a two cycle $C=\{x_1,x_2\}$ of $f$. Here $f^k(x)$ denotes the $k$-fold composition of $f$ evaluated at $x$. 
\end{lemma}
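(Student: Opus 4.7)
The plan is a direct continuity argument exploiting the commuting relation between the odd and even subsequences of iterates. Assume $f$ is continuous on $(0,\infty)$, which holds in our application since $f(x) = G(\lambda, n G(\lambda, nx))$ inherits smoothness from $G$. Suppose both subsequences converge: $\mu_1(k) \to a$ and $\mu_2(k) \to b$ as $k \to \infty$, where $a, b \in (0,\infty)$.

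From the definitions of the two subsequences, one has the identities $\mu_2(k) = f(\mu_1(k))$ and $\mu_1(k+1) = f(\mu_2(k))$. Passing to the limit on both sides and invoking continuity of $f$ yields $b = f(a)$ and $a = f(b)$. Therefore $(f \circ f)(a) = a$ and $(f \circ f)(b) = b$, so both limits are fixed points of $f \circ f$.

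Two cases arise. If $a = b$, then $a = f(a)$, so the joint limit is a fixed point of $f$ itself. If $a \neq b$, then $\{a,b\}$ is a genuine period-two orbit of $f$, exhausting the two alternatives in the lemma statement. No other limiting behavior is possible, since any accumulation point of either subsequence must satisfy the system $a = f(b)$, $b = f(a)$ by the same continuity argument.

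I do not expect any real obstacle here: the content of the lemma is essentially that the forward-orbit structure restricted to even and odd indices must close up under $f \circ f$. The useful consequence, which is presumably the reason the lemma is stated, is that in the symmetric case of Lemma \ref{lemma:fp} one can combine this result with the preceding lemma (no periodic points of period $>1$ for strictly monotone $f$) to rule out the 2-cycle scenario entirely, forcing convergence of $\mu^{(t)}$ to the unique fixed point of $f$.
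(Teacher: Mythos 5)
Your argument is correct. The paper in fact states Lemma \ref{lemma:dynamic.constraint} without any proof at all (it appears as an unproven ``preparatory result'' inside the proof of Lemma \ref{lemma:fp}, sandwiched between two other preparatory lemmas, only the last of which is proved), so you have supplied what the paper omitted rather than diverged from an existing argument. The proof you give is the natural one: the identities $\mu_2(k) = f(\mu_1(k))$ and $\mu_1(k+1) = f(\mu_2(k))$ follow directly from the definitions $\mu_1(k) = f^{2k-1}(\mu_0)$, $\mu_2(k) = f^{2k}(\mu_0)$, and passing to the limit using continuity of $f$ forces $b = f(a)$, $a = f(b)$, whence both limits are fixed points of $f\circ f$; the dichotomy $a=b$ versus $a\neq b$ then corresponds exactly to a fixed point versus a genuine 2-cycle of $f$. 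You are right to flag explicitly that continuity is required (the lemma as stated in the paper does not mention it, but $f(x) = G(\lambda, nG(\lambda, nx))$ is smooth in the application, so no harm), and your closing remark correctly identifies how this lemma is combined with the monotonicity lemma (no periodic points of period $p>1$) to rule out the 2-cycle alternative in the symmetric case.
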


\begin{lemma}\label{lemma:dynamics.G}
For $\lambda>0$, function $G(\lambda, nx)$ has a single fixed point $x_*$ in the interval $[0, \sqrt{\lambda\slash n}]$ and no other fixed points in $[0,\infty)$. 
\end{lemma}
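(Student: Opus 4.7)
\textbf{Plan for Lemma \ref{lemma:dynamics.G}.} The plan is to show that the map $\Phi(x) := G(\lambda, nx)$ is continuous and strictly decreasing on $[0,\infty)$, while the identity map $x\mapsto x$ is strictly increasing, so that the equation $\Phi(x)=x$ has at most one root; then verify via the intermediate value theorem that the unique root lies in $[0,\sqrt{\lambda/n}]$.

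First I would establish the crude bound $\Phi(x) < \lambda/(nx)$ for $x>0$. By Lemma \ref{lem:qkhb}(ii), $\Phi(x)=(\lambda/(nx))\,\gamma(\lambda+1,nx)/\gamma(\lambda,nx)$, and Remark \ref{rem:incomp_gamma} gives $\gamma(\lambda+1,nx)=\gamma(\lambda,nx)-(nx)^\lambda e^{-nx}/\Gamma(\lambda+1)$, so the ratio is strictly less than $1$. Consequently any fixed point $x^\star$ satisfies $x^\star<\lambda/(nx^\star)$, i.e.\ $x^\star<\sqrt{\lambda/n}$. This already rules out fixed points in $(\sqrt{\lambda/n},\infty)$.

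Next I would show that $\beta\mapsto G(\lambda,\beta)$ is strictly decreasing on $(0,\infty)$. The cleanest route is the probabilistic identity: with $U\sim f_{k,h,\beta}$ from Eq.\,\eqref{eq:qkhb}, we have $G(\lambda,\beta)=\mathbb E\,U^{2k}$, and differentiating under the integral,
\begin{equation*}
\partial_\beta G(\lambda,\beta) \;=\; -\mathrm{Var}_{f_{k,h,\beta}}\!\bigl(U^{2k}\bigr) \;<\; 0.
\end{equation*}
Hence $\Phi(x)=G(\lambda,nx)$ is strictly decreasing in $x$ on $(0,\infty)$, and the function $F(x):=\Phi(x)-x$ is strictly decreasing as a sum of strictly decreasing functions. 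In particular $F$ has at most one zero.

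Finally, extend $\Phi$ continuously to $x=0$: by Remark \ref{rem:incomp_gamma}, $\gamma(a,\beta)\sim \beta^a/\Gamma(a+1)$ as $\beta\to 0^+$, so
\begin{equation*}
\lim_{x\to 0^+}\Phi(x) \;=\; \lim_{\beta\to 0^+}\frac{\lambda}{\beta}\cdot\frac{\beta^{\lambda+1}/\Gamma(\lambda+2)}{\beta^{\lambda}/\Gamma(\lambda+1)} \;=\; \frac{\lambda}{\lambda+1},
\end{equation*}
which gives $F(0^+)=\lambda/(\lambda+1)>0$. At the right endpoint, the bound $\Phi(x)<\lambda/(nx)$ at $x=\sqrt{\lambda/n}$ yields $\Phi(\sqrt{\lambda/n})<\sqrt{\lambda/n}$, i.e.\ $F(\sqrt{\lambda/n})<0$. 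Continuity of $\Phi$ on $(0,\infty)$ plus the intermediate value theorem then deliver a root in $(0,\sqrt{\lambda/n})$, which must be unique by the monotonicity argument above. No technical obstacle stands in the way here; the only step requiring mild care is justifying differentiation under the integral in the variance identity, which is immediate because the integrand and its $\beta$-derivative are dominated by $u^{h}$ (resp.\ $u^{h+2k}$) on the compact interval $[0,1]$.
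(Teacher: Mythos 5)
Your proof is correct and follows essentially the same route as the paper's: establish monotonicity of $x\mapsto G(\lambda,nx)$, use the intermediate value theorem on $[0,\sqrt{\lambda/n}]$ with the crude bound $G(\lambda,\beta)<\lambda/\beta$ to locate the root, and invoke monotonicity of $G(\lambda,nx)-x$ to rule out other roots. The one place you add genuine value is the explicit variance identity $\partial_\beta G(\lambda,\beta)=-\mathrm{Var}_{f_{k,h,\beta}}(U^{2k})<0$, which the paper simply asserts as "monotone decreasing" without proof; otherwise the structure is the same (the paper applies the IVT first on $[0,\lambda/(\lambda+1)]$ and then refines the bound to $\sqrt{\lambda/n}$, whereas you apply it directly at $\sqrt{\lambda/n}$, but this is a cosmetic difference).
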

\begin{proof}
Suppose $\lambda>0$. The $G(\lambda,nx)$ is monotone decreasing on $[0,\infty)$. Its maximum is $\lambda\slash (\lambda+1) = G(\lambda, 0)>0$. There must be a root of $G(\lambda, nx)-x$ on the interval $[0,\lambda\slash (\lambda+1)]$ by intermediate value theorem. Similarly $G(\lambda, nx)-x<0$ on the interval $(\lambda\slash(\lambda+1), \infty)$ so it has no roots on this interval. 

We now provide a more detailed bound on the fixed point. It follows from Lemma \ref{lem:qkhb} that $G(\lambda,nx)-x < \lambda\slash (nx) - x$. Letting $x_*$ denote the fixed point of $G(\lambda,nx)$ in $[0,\lambda\slash(\lambda+1)]$ we have 
\begin{align}
0=G(\lambda,nx_*)-x_* < \lambda\slash (nx_*) - x_*
\end{align}
from which it follows that $(x_*)^2 < \lambda\slash n$ so $x_*< \vert \sqrt{\lambda\slash n}\vert$.  Therefore $0<x_*<\sqrt{\lambda\slash n}$.
\end{proof}

\

We now complete the proof of the symmetric case. To simplify notation let $\lambda=\lambda_1=\lambda_2$ and $F(x)=G(\lambda,nx)$. Let $f^k(x)$ denote the $k$-fold composition of $f$. The system in the symmetric case is 
\begin{align}\label{sys:symmetric}
    \begin{split}
        \mu_2(0)&=\mu_0\\
        \mu_1(k)&= F^{2k-1}(\mu_0) \\
        \mu_2(k)&= F^{2k}(\mu_0)
    \end{split}
\end{align}

The structure of the symmetric system heavily limits the possible convergence behavior of the system. Lemma \ref{lemma:dynamic.constraint} shows that this type of system can only converge to a fixed point of $F$ or to 2-cycle of $F$. Furthermore, lemma \ref{lemma:dynamics.G} shows that $F(x)=G(\lambda,nx)$ only has a single fixed point $x_*$ which asymptotically approaches $\sqrt{\lambda\slash n}$ and no periodic points of order $p>1$. This completes the proof. 

\subsection{Proof of Lemma \ref{lemma:fp_order}}
The proof for the symmetric case has already been established in the previous Lemma. We therefore focus on the asymmetric case. We begin with recording a useful result. We will need to know if there exist solutions to certain equations which arise when studying the order of the fixed points for the system \eqref{eq:sys_decouple}. More specifically these equations are $zG(\alpha, z)=\beta$ and $1= xG(\alpha, \beta x)$. They are equivalent under the change of variables $\beta x=z$, The existence of solutions to these equations is as follows. 
 \begin{lemma} \label{lemma:constraint.eq.1}
 Suppose $\alpha,\beta\in (0,\infty)$. Recall $G$ defined as in Lemma \ref{lem:qkhb}.
 \begin{enumerate}
     \item For $\alpha>\beta$, $zG(\alpha, z)=\beta$ has a solution in $(0,\infty)$.
     \item For $\alpha\leq \beta$, $zG(\alpha, z)=\beta$ has no solution in $(0,\infty)$.
 \end{enumerate} 
 \end{lemma}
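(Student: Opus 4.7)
The plan is to exploit the explicit formula for $G$ from Lemma \ref{lem:qkhb} to rewrite $zG(\alpha,z)$ in a form whose monotonicity and limits can be read off directly. Using $G(\alpha,z) = (\alpha/z)\,\gamma(\alpha+1,z)/\gamma(\alpha,z)$ together with $\gamma(a,z) = \Gamma(a)^{-1}\int_0^z t^{a-1}e^{-t}\,dt$, the factor of $\alpha$ is absorbed and one gets
$$
H(z) \;:=\; zG(\alpha,z) \;=\; \frac{\int_0^z t^{\alpha}e^{-t}\,dt}{\int_0^z t^{\alpha-1}e^{-t}\,dt},
$$
which, pleasantly, is exactly the truncated mean $\mathbb E[T \mid T\le z]$ for $T\sim\mathrm{Gamma}(\alpha,1)$. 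The lemma then reduces to analyzing $H$ on $(0,\infty)$, and the intuition is immediate: as the truncation threshold sweeps from $0$ to $\infty$, the conditional mean sweeps from $0$ to $\mathbb E[T]=\alpha$.

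I would next make that intuition rigorous by verifying continuity, strict monotonicity, and the two boundary limits. Writing $H(z)=N(z)/D(z)$ with $N(z)=\int_0^z t^{\alpha}e^{-t}dt$ and $D(z)=\int_0^z t^{\alpha-1}e^{-t}dt$, the quotient rule yields
$$
H'(z) \;=\; \frac{z^{\alpha-1}e^{-z}}{D(z)^2}\int_0^z (z-t)\, t^{\alpha-1}e^{-t}\,dt \;>\; 0, \qquad z>0,
$$
because the integrand is strictly positive on the interior. For the limits I would appeal to Remark \ref{rem:incomp_gamma}: the small-$z$ estimate $\gamma(a,z)\sim z^a/\Gamma(a+1)$ gives $H(z)\sim \alpha z/(\alpha+1)\to 0$ as $z\downarrow 0$, and $\lim_{z\to\infty}\gamma(a,z)=1$ gives $H(z)\to\alpha$ as $z\to\infty$.

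With continuity, strict monotonicity, and the boundary values established, both claims fall out. If $\alpha>\beta$, then $H(0^+)=0<\beta<\alpha=H(\infty)$, so the intermediate value theorem supplies a point $z\in(0,\infty)$ with $H(z)=\beta$, and strict monotonicity makes this root unique (a free bonus beyond what the lemma asks). If instead $\alpha\le\beta$, then for every finite $z$ the strict inequality $H(z)<\alpha\le\beta$ holds, ruling out any solution. The only step that requires real computation is the sign of $H'$, and even there the identity $N'(z)D(z)-N(z)D'(z)=z^{\alpha-1}e^{-z}\!\int_0^z(z-t)t^{\alpha-1}e^{-t}dt$ makes positivity transparent, so I do not anticipate a genuine obstacle.
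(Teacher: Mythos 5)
Your proof is correct and goes by essentially the same route as the paper (intermediate value theorem with boundary limits at $0$ and $\infty$), but it is more careful in a way that genuinely matters. For part~2, the paper simply notes that $zG(\alpha,z)-\beta$ starts at $-\beta$ and converges to $\alpha-\beta\le 0$, and concludes there is no root; as stated that inference is incomplete, since a continuous function can start and end nonpositive while crossing zero in between. Your argument closes that gap by establishing the strict inequality $zG(\alpha,z)<\alpha$ for all finite $z$, which you obtain from the strict monotonicity $H'(z)>0$ together with $H(z)\to\alpha$. The identity $H(z)=\int_0^z t^{\alpha}e^{-t}\,dt\big/\int_0^z t^{\alpha-1}e^{-t}\,dt$ (the truncated mean of a $\mathrm{Gamma}(\alpha,1)$ variable) and the quotient-rule computation $N'D-ND'=z^{\alpha-1}e^{-z}\int_0^z(z-t)t^{\alpha-1}e^{-t}\,dt>0$ are both correct, and the small-$z$ asymptotic $H(z)\sim\alpha z/(\alpha+1)$ follows from Remark~\ref{rem:incomp_gamma} as you say. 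As a by-product you also get uniqueness of the solution in part~1, which the paper does not record but which is consistent with (and arguably assumed in) the surrounding fixed-point analysis.
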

\begin{proof}
 1. If $\alpha>\beta$, apply intermediate value theorem to $zG(\alpha, z)-\beta$ which converges to  $\alpha-\beta>0 $ as $z\to \infty$ and starts at $-\beta$ for $z=0$. So the equation has a root. \\
 2. If $\alpha\leq \beta$, apply intermediate value theorem to $zG(\alpha, z)-\beta$ which converges to $\alpha-\beta \leq 0$ as $z\to \infty$ and starts at $-\beta$ for $z=0$. So the equation has no root in $(0,\infty)$.
\end{proof}

Back to the main proof, write $\mu_1^\star=c_1\slash f(n)$ and $\mu_2^\star=c_2\slash f(n)$, where $c_1,c_2$ are constants that do not depend on $n$ and $f:\mathbb{N}\to (0,\infty)$. We will analyze the behavior of the order of the fixed point $\mu_1^\star$. The analysis of $\mu_2^\star$ is similar. The fixed point equation becomes
\begin{align}
\mu_1^\star&= G(\lambda_2, nG(\lambda_1, n\mu_1^\star)) \nonumber\\
\frac{c_1}{f(n)} &= G\left(\lambda_2, nG\left(\lambda_1, c_1 \frac{n}{f(n)}\right)\right) 
\end{align}
\textbf{Case 1} Assume that $\lim_{n\to \infty}f(n)\slash n =\infty$. Then $\lim_{n\to \infty}n\slash f(n)=0$ and for sufficiently large $n$ we have 
\begin{align}
\frac{c_1}{f(n)} &= G\left(\lambda_2, nG\left(\lambda_1, c_1 \frac{n}{f(n)}\right)\right) \nonumber \\
&\approx  G\left(\lambda_2, n\frac{\lambda_1}{\lambda_1+1} \right) \approx \frac{\lambda_2(\lambda_1+1)}{\lambda_1 n}
\end{align}
This implies $f(n)=O(n)$, a contradiction of our assumptions. \\
\\
\textbf{Case 2} Without loss of generality we can assume that $\lim_{n\to \infty}f(n)\slash n =1$. Then $\lim_{n\to \infty}n\slash f(n)=1$ and for sufficiently large $n$ we have 
\begin{align}
\frac{c_1}{f(n)} &= G\left(\lambda_2, nG\left(\lambda_1, c_1 \frac{n}{f(n)}\right)\right) \nonumber \\
&\approx G(\lambda_2, nG(\lambda_1, c_1)) \approx \frac{\lambda_2}{ nG(\lambda_1, c_1)}
\end{align}
The constant $c_1$ is a solution to the additional constraining equation
\begin{align}
    zG(\lambda_1, z)=\lambda_2 \label{eqn:c1.constraint}
\end{align}
Applying an analogous argument to $\mu_2^\star$, we get that $c_2$ must solve the constraining equation
\begin{align}
    zG(\lambda_2, z)=\lambda_1 \label{eqn:c2.constraint}
\end{align}
 By Lemma \ref{lemma:constraint.eq.1} there is a solution to the equation (\ref{eqn:c2.constraint}) for $c_2$, but still no solution to the equation (\ref{eqn:c1.constraint}) for $c_1$. 
 
\textbf{Case 3} Assume that $\lim_{n\to \infty}f(n)\slash n =0$ and $\lim_{n\to\infty} f(n)=\infty$. Then $\lim_{n\to \infty}n\slash f(n)=\infty$ and for sufficiently large $n$ we have 
\begin{align}
\frac{c_1}{f(n)} &= G\left(\lambda_2, nG\left(\lambda_1, c_1 \frac{n}{f(n)}\right)\right) \approx G\left(\lambda_2, \lambda_1 f(n)\right) \approx \frac{\lambda_2}{\lambda_1}\frac{c_1}{f(n)}
\end{align}
Under the assumption $\lambda_1<\lambda_2$ we have reached a contradiction. 

\textbf{Case 4} Assume that $\lim_{n\to \infty}f(n)\slash n =0$ and $f(n)$ is bounded. This implies that $\mu_1^\star\approx 1\slash c_1$, where $c_1$ is a constant independent of $n$, 
\begin{align} 
\frac{1}{c_1} &= G\left(\lambda_2, nG\left(\lambda_1, \frac{n}{c_1}\right)\right) \approx G(\lambda_2, \lambda_1 c_1)
\end{align}
By Lemma \ref{lemma:constraint.eq.1}, the equation for $1= zG(\lambda_1, \lambda_2z)$ which determines $c_1$ has a solution but  the equation $1= zG(\lambda_1, \lambda_2z)$ which determines $c_2$ has no solution.

These four cases exhaust all the possible behaviors of a generic $f:\mathbb{N}\to (0,\infty)$. Therefore we have exhausted all possible solutions bounded fixed points of the functions $G(\lambda_2,nG(\lambda_1,nx))$ and $G(\lambda_1,nG(\lambda_2,nx))$. For $\lambda_1<\lambda_2$ the only possible solutions to the additional constraining equations yeild $\mu_1^\star=c_1$ where $c_1$ is the solution to $1=zG(\lambda_2,\lambda_1z)$ and $\mu_2^\star=c_2\slash n$ where $c_2$ is the solution to $\lambda_1=zG(\lambda_2,z)$. This completes the proof of order in the asymmetric case.

\section{Derivation of CAVI and ELBO for Example 27 in Watanabe's book} \label{app:ex27}
%
The CAVI updates in the original $(\theta_1, \theta_2)$ coordinate is given by 
\begin{equation}\label{eq:cavi_original}
\begin{aligned}
    \log q_1(\theta_1) &=  -\frac{n K_2}{2}\left(\theta_1-\frac{K_1}{K_2}\right)^2 -\frac{n}{2}S_{yy}+\frac{nK_1^2}{2K_2} + \mbox{Const.}\\
        \log q_2(\theta_2) &= -\frac{n m_2}{2}T_2(\theta_2, X^n) + nm_1T_1(\theta_2,X^n,Y^n)- \frac{n}{2}S_{yy}
    \end{aligned}
    \end{equation}
where $m_1=\int_0^1\theta_1q_1(\theta_1)d\theta_1$, $m_2 =\int_0^1\theta_1^2q_1(\theta_1)d\theta_1$, $S_{yy}= \sum_{i=1}^n y_i^2$ and
    \begin{align*}
    \begin{aligned}
               T_1(X^n,Y^n,\theta_2)&=\frac{1}{n}\sum_{i=1}^n y_i\tanh(\theta_2x_i) \\
                  T_2(X^n,\theta_2) &= \frac{1}{n}\sum_{i=1}^n \tanh^2(\theta_2x_i)    
           \end{aligned}
     \quad
     \begin{aligned}
         K_1(X^n,Y^n)&=\int_{0}^1T_1(X^n,Y^n,\theta_2)q_2(\theta_2)d\theta_2 \\
        K_2(X^n)&=\int_{0}^1T_2(X^n,\theta_2)q_2(\theta_2)d\theta_2. 
        \end{aligned}
            \end{align*}
It follows that the ELBO is given by   $I_p + I_1 + I_2$, where 
\begin{align*}
I_p &=  \frac{n}{2} \log(2\pi)-\frac{n}{2}\left(S_{yy} -2m_1K_1(X^n,Y^n)+m_2K_2(X^n)\right), \\
I_1 &=  - \log\mathcal{Z}_1+\frac{nK_2}{2}\left(m_2-2\frac{K_1}{K_2}m_1+\frac{K_1^2}{K_2^2}\right),\\
I_2 &=  \log\mathcal{Z}_2 +\frac{nm_2}{2}K_2(X^n)  - nm_1K_1(X^n,Y^n). 
\end{align*}
In the following, we shall transform coordinates to reduce the likelihood to a normal form.  To aid this, we 
rewrite the likelihood function as
\begin{align*}
    p(Y^n\mid X^n, \theta_1,\theta_2)&=\frac{1}{\log(2\pi)^{n\slash 2}}\exp\left\{ -\frac{n}{2}S_{yy}-\frac{n}{2}\theta_1^2\theta_2^2K_{n,0}(\theta_2,X^n)-n\theta_1\theta_2K_{n,1}(\theta_2,X^n,Y^n)\right\}
\end{align*}
where 
\begin{align*}
K_{n,0}(\theta_2, X^n)=\frac{1}{n}\sum_{i=1}^n\left\{\frac{\tanh(\theta_2x_i)}{\theta_2}\right\}^2,  \quad  K_{n,1}(\theta_2,X^n,Y^n)=\frac{1}{n}\sum_{i=1}^n\left\{\frac{y_i\tanh(\theta_2x_i)}{\theta_2}\right\}. 
\end{align*}
We use the following asymptotic limits of  $K_{n,0}$ and $K_{n,1}$ to approximate and thus simplify the likelihood further. 
\begin{align*}
        K_{n,1} \to \frac{1}{\theta_2}\mathbb{E}\left(\mathbb{E}[Y\tanh(\theta_2X)\mid X]\right)=0, \quad K_{n,0} \to K_0(\theta)=\int_0^1\frac{\tanh(\theta_2x)}{\theta_2}dx. 
\end{align*}
Therefore the posterior $p(\theta_1,\theta_2\mid X^n,Y^n)$ concentrates around 
\begin{align*}
         \gamma_K^{(n)}(\theta_1,\theta_2) \,\propto \, \exp\left\{-nK(\theta_1,\theta_2)\right\}\phi(\theta_1,\theta_2),  \quad K(\theta_1,\theta_2)= \frac{1}{2}\theta_1^2\theta_2^2K_{0}(\theta_2). 
\
\end{align*}
 Conversion to normal form is done by the change of variables 
\begin{equation*}
    \xi_1=\theta_1,\quad \xi_2=\theta_2(K_{0}(\theta_2)\slash2)^{1\slash 2}:=g(\theta_2).
\end{equation*}
The inverse transform is $\theta_1=\xi_1$ and $\theta_2=g^{-1}(\xi_2)$ and its Jacobian is 
\begin{align*}
    J(\xi_2)=\left\vert\frac{1}{g'(g^{-1}(\xi_2))}\right\vert. 
\end{align*}
Under the $\xi$-coordinate system we have 
\begin{align*}
   K(\theta_1,\theta_2)&= K(\xi_1,g^{-1}(\xi_2))=\theta_1^2\theta_2^2\frac{K_{0}(\theta_2, X^n)}{2}=\xi_1^2\xi_2^2. 
\end{align*}
The concentrated normal form of the posterior is 
\begin{align*}
    \gamma_K^{(n)}(\xi_1,\xi_2)  = \exp(-n\xi_1^2\xi_2^2 + \log J(\xi_2) -\log C_L),
\end{align*}
where $C_L$ is the normalizing constant. Then 
\begin{align}\label{eq:cavi_transformed}
    q_1(\xi_1) =  \exp\left\{  -nF_2\xi_1^2 -\log C_1 \right\}, \quad  q_2(\xi_2) = \exp\left\{-nF_1\xi_2^2 + \log J(\xi_2) -\log C_2\right\}, 
    \end{align}
where 
\begin{align*}
        F_1=\int_{0}^{1}\xi_1^2q_1(\xi_1)d\xi_1, \quad   F_2=\int_{g(0)}^{g(1)}\xi_2^2q_2(\xi_2)d\xi_2. 
\end{align*}
The negative KL divergence between the variational distribution $q(\xi)=q_1(\xi_1)\otimes q_2(\xi_2)$ and the normal form of the posterior $\gamma_K^{(n)}(\xi)$ is 
\begin{align*}
     -D(q(\xi) \mid\mid\gamma^{(n)}_K(\xi)) = I_1 + I_2 + I_\gamma,
\end{align*}
where 
\begin{align*}
    I_\gamma 
    = -nF_1F_2 - \log(C_L),  \quad   I_1 =  nF_1F_2  + \log(C_1), \quad 
    I_2 = nF_1F_2  +   \log(C_2). 
\end{align*}
Hence the  optimal evidence lower bound is 
\begin{align}\label{eq:elbo_transformed}
     \Psi_n(q^*) &=  -nF_1^*F_2^* 
     +\log (C_1^*) + \log(C_2^*). 
\end{align}

\section{Verifying Assumption A1 for Example 27 in Watanabe's book}\label{sec:A1verify}
We bound the difference $\vert \tilde{Z}_i(\xi)-\tilde{Z}_i(\xi')\vert$ by repeated application of the triangle inequality, 
\be
\vert \tilde{Z}_i(\xi)-\tilde{Z}_i(\xi')\vert  & \leq A + B + C, 
\ee
where 
\be
A &= \frac{1}{2}\vert \xi_1g^{-1}(\xi_2)F^2(X_i; g^{-1}(\xi_2)) - \xi_1'g^{-1}(\xi_2')F^2(X_i; g^{-1}(\xi_2')) \vert,\\
B &= \vert Y_iF(X_i; g^{-1}(\xi_2)) - Y_iF(X_i; g^{-1}(\xi_2'))\vert, \\
C &= \frac{1}{2} \vert \xi_1g^{-1}(\xi_2)K_0(g^{-1}(\xi_2))- \xi_1'g^{-1}(\xi_2')K_0(g^{-1}(\xi_2'))\vert. 
\ee
Denoting $\|x \|_1 = \sum_{j=1}^d |x_j|$ for $x \in \mathbb{R}^d$, using Lemmata \ref{lemma:F.Lip} and \ref{lemma:K0.Lip} yields
\be 
A&\leq \frac{1}{2}(L_{\infty}^{1,2}(X_i) + C_{g^{-1}}L_{\infty}^{0,2}(X_i) +  C_{g^{-1}}\Vert g^{-1}\Vert_\infty L_2(X_i))\Vert \xi-\xi' \Vert_1 :=L_a(X_i)\Vert \xi-\xi' \Vert_1\\ 
B &\leq  C_{g^{-1}}L_1(X_i)\vert Y_i \vert\Vert \xi-\xi'\Vert_1:=L_b(X_i,Y_i)\Vert \xi-\xi' \Vert_1 \\
C&\leq \frac{1}{2}(\Vert g^{-1}\Vert_\infty \Vert K_0 \Vert_\infty + C_{g^{-1}}\Vert K_0 \Vert_\infty + C_{K_0}C_{g^{-1}} \Vert g^{-1}\Vert_\infty)\Vert \xi-\xi' \Vert_1:=L_c(X_i)\Vert \xi-\xi' \Vert_1. 
\ee
Combining the bounds we have 
 $\vert \tilde{Z}_i(\xi)- \tilde{Z}_i(\xi')\vert \leq  L(X_i) \|\xi - \xi'\|_1$ with with $L(X_i) = L_a(X_i)+L_b(Y_i,X_i)+L_c(X_i)$. 
It remains to show that $L(X_1)$ satisfy the tail bound $\mb Ee^{t L(X_1)} \leq e^{t^2/(2c_L)}$. We will show that $L_a(X_1)$, $L_b(Y_1,X_1)$, $L_c(X_1)$ are all sub-Gaussian, which implies the above bound on $L(X_1)$ holds. $L_c(X_1)$ is a constant random variable so it is sub-Gaussian. $L_b(Y_1,X_1)$ is the product of a bounded random variable $L_1(X)$ with a sub-Gaussian random variable $\vert Y_1\vert$. $L_a(X_1)$ is the sum of bounded random variables and hence sub-Gaussian.

\begin{lemma}\label{lemma:F.Lip}
Suppose $X\sim U[0,1]$. Let $F(X;\omega):=\tanh(\omega X)\slash \omega,\, x,\omega \in [0,1]$, then for $s=1,2$ there exist bounded random variables $L_{\infty}^{0,s}(X)$, $L_{\infty}^{1,s}(X)$, $L_s(X)$ such that
\be 
\sup_\omega\{ F^s(X;\omega)\} &\leq L_{\infty}^{0,s}(X)\\
\sup_\omega\{ \omega F^s(X;\omega)\} &\leq L_{\infty}^{1,s}(X)\\
\vert F^s(X;\omega)-F^s(X;\omega')\vert &\leq L_s(X) \vert \omega-\omega' \vert
\ee
where $L_{\infty}^{k,s}(X)=L_s(X)=\vert X \vert$ for $k=0,1$ and $s=1,2$
\end{lemma}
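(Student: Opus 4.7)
The cleanest approach is to remove the apparent singularity at $\omega = 0$ in $F(X;\omega)=\tanh(\omega X)/\omega$ by introducing
\[
h(u) := \frac{\tanh(u)}{u}, \qquad h(0):=1,
\]
so that $F(X;\omega) = X\,h(\omega X)$ and, for the squared version, $F^2(X;\omega) = X^2\, h^2(\omega X)$. The whole proof then reduces to a handful of uniform estimates on $h$ and its derivative on the compact interval $[0,1]$, combined with the fact that $X\in[0,1]$ means $|X|^r\leq |X|$ for any $r\geq 1$.

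The plan is to first establish two elementary inequalities. Using the Taylor expansion $\tanh(u)=u-u^3/3+O(u^5)$ and the monotonicity of $\tanh$, one sees $0\leq h(u)\leq 1$ on $[0,\infty)$. Differentiating,
\[
h'(u) = \frac{u\,\mathrm{sech}^2(u)-\tanh(u)}{u^2},
\]
and the Taylor expansion of the numerator yields $h'(u)=-\tfrac{2u}{3}+O(u^3)$, so $|h'(u)|$ is continuous on $[0,1]$ (the value at $0$ being $0$) and bounded by some absolute constant $C_h$. These two facts are all that is needed.

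Once these are in hand, the three inequalities follow mechanically. For the sup bounds: $|F^s(X;\omega)|=|X|^s\, h^s(\omega X)\leq |X|^s\leq |X|$, and $|\omega F^s(X;\omega)|=\omega|X|^s h^s(\omega X)\leq |X|^s\leq |X|$ because $\omega\leq 1$; so $L_\infty^{0,s}(X)=L_\infty^{1,s}(X)=|X|$ works. For the Lipschitz bound, the chain rule gives
\[
\partial_\omega F(X;\omega) = X^2\, h'(\omega X),\qquad \partial_\omega F^2(X;\omega) = 2X^3\, h(\omega X)\, h'(\omega X),
\]
both of which are bounded in absolute value by $C_h\,|X|^2$ and $2C_h\,|X|^3$ respectively, hence by a constant times $|X|$ since $X\in[0,1]$. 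The mean value theorem on the $C^1$ function $\omega\mapsto F^s(X;\omega)$ then produces $|F^s(X;\omega)-F^s(X;\omega')|\leq C\,|X|\,|\omega-\omega'|$, which is the desired Lipschitz estimate with $L_s(X)\propto |X|$ (the constant may be absorbed into $c_L$ later in Assumption A1).

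The only subtle point is handling $\omega\to 0$, where the naive expression $\tanh(\omega X)/\omega$ looks singular; this is precisely what the factorization $F=X\,h(\omega X)$ dissolves, since $h$ is smooth at the origin. There is no real analytic obstacle beyond this bookkeeping.
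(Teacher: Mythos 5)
Your proof is correct and uses essentially the same strategy as the paper: bound the $\omega$-derivative of $F^s$ by a multiple of $|X|$ on $[0,1]$ and invoke the mean value theorem, with the sup bounds following from $|\tanh(u)|\le u$ and $|X|^s\le|X|$. The factorization $F(X;\omega)=X\,h(\omega X)$ with $h(u)=\tanh(u)/u$, $h(0)=1$, is a cleaner way of dissolving the removable singularity at $\omega=0$ than the paper's explicit quotient formulas (and you are right to flag that the Lipschitz constant may only be a multiple of $|X|$ rather than $|X|$ itself, which is harmless for Assumption A1), but the underlying calculation is the same.
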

\begin{proof}
For $s=1$, we have $F^s(X;\omega)= \tanh(\omega X)\slash \omega$ and $\omega F^s(X;\omega)= \tanh(\omega X)$. Both of these functions are globally $\vert\cdot\vert$-Lipschitz functions in the $\omega$-variable since they both have bounded, continuous derivatives. 
\be 
\frac{d}{d\omega}F(X;\omega)&= \frac{d}{d\omega} \left[\frac{\tanh(\omega X)}{\omega}\right]= \frac{X\omega\text{sech}^2(X\omega)-\tanh(X\omega)}{\omega^2}. 
\ee 
It follows 
\be 
\sup_{\omega\in[0,1]}\vert F(X;\omega)\vert &\leq \vert X \vert:= L_{\infty}^{0,1}(X), \\
\sup_{\omega\in[0,1]}\vert \omega F(X;\omega)\vert &=\vert \tanh(X)\vert\leq \vert X\vert := L_{\infty}^{1,1}(X), \\
 \sup_{\omega\in[0,1]}\left \vert \frac{d}{d\omega} \left[\frac{\tanh(\omega X)}{\omega}\right] \right \vert &=  \sup_{\omega\in[0,1]} \left \vert X\omega\left( \frac{\text{sech}^2(X\omega)-\frac{\tanh(X\omega)}{X\omega}}{\omega^2} \right) \right \vert \leq \vert X\vert := L_1(X). 
\ee 
Similarly for $s=2$, we have $F^s(X;\omega)= \tanh^2(\omega X)\slash \omega^2$ and $\omega F^s(X;\omega)= \tanh^2(\omega X)\slash \omega$. Both of these functions are globally $\vert\cdot\vert$-Lipschitz functions in the $\omega$-variable since they both have bounded, continuous derivatives. 
\be 
\frac{d}{d\omega}F(X;\omega)&= \frac{d}{d\omega} \left[\frac{\tanh^2(\omega X)}{\omega^2}\right]= \frac{\tanh(X\omega)(\omega\text{sech}^2(X\omega)-2\tanh(X\omega)}{\omega^3}. 
\ee 
It follows 
\be 
\sup_{\omega\in[0,1]}\vert F^2(X;\omega)\vert &\leq \vert X\vert:= L_{\infty}^{0,2}(X), \\
\sup_{\omega\in[0,1]}\vert \omega F^2(X;\omega)\vert &\leq \vert X \vert:= L_{\infty}^{1,2}(X), \\
 \sup_{\omega\in[0,1]}\left \vert \frac{d}{d\omega} \left[\frac{\tanh^2(\omega X)}{\omega}\right] \right \vert &=  \sup_{\omega\in[0,1]} \left \vert 2X\omega\left( \frac{\text{sech}^2(X\omega)-\frac{\tanh(X\omega)}{X\omega}}{\omega^3} \right) \right \vert \leq \vert X\vert := L_2(X).
\ee 
\end{proof}
\begin{lemma}\label{lemma:K0.Lip}
The functions
$$K_0(\omega)=\int_0^1 \frac{\tanh^2(\omega x)}{\omega^2}dx,\,\omega\in[0,1]$$ and the inverse of the function $g(\theta)=\theta(K_0(\theta)\slash \theta)^{1\slash 2}$, for $\theta\in[0,1]$, are  globally $\vert \cdot \vert$-Lipschitz with Lipschitz constants $C_{K_0}$ and $C_{g^{-1}}$,
respectively,
\be
\vert K_0(\omega)-K_0(\omega')\vert \leq C_{K_0}\vert \omega -\omega' \vert, \quad \vert g^{-1}(\omega)-g^{-1}(\omega')\vert \leq C_{K_0}\vert \omega -\omega' \vert. 
\ee
\end{lemma}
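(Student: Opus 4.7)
The two parts are essentially standard applications of differentiation under the integral sign and the inverse function theorem, with the main delicacy being the behavior of $g'$ at $\theta = 0$. Throughout, I read the definition of $g$ as in Eq.\,\eqref{eq:trans_neunet}, namely $g(\theta) = \theta (K_0(\theta)/2)^{1/2}$.

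For $K_0$, write $K_0(\omega) = \int_0^1 \phi(\omega x)\, dx$, where $\phi(u) := (\tanh(u)/u)^2$ is extended by continuity at $u=0$ to equal $1$. Since $\tanh$ is real analytic, $\phi$ is smooth on $\mathbb{R}$, and in particular $\phi'$ is uniformly bounded on $[0,1]$. Differentiating under the integral sign (justified by dominated convergence) yields $K_0'(\omega) = \int_0^1 x\, \phi'(\omega x)\, dx$, which is uniformly bounded on $[0,1]$. Lipschitz continuity of $K_0$ on $[0,1]$ then follows with constant $C_{K_0} := \sup_{\omega \in [0,1]} \int_0^1 x\, |\phi'(\omega x)|\, dx < \infty$ via the mean value theorem.

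For $g^{-1}$, I will invoke the inverse function theorem: if $g \in C^1([0,1])$ and $g'(\theta) \ge c > 0$ uniformly, then $g^{-1}$ is $C^1$ on $g([0,1])$ with $(g^{-1})'(y) = 1/g'(g^{-1}(y)) \le 1/c$, giving Lipschitz continuity with constant $C_{g^{-1}} = 1/c$. Smoothness of $g(\theta) = \theta (K_0(\theta)/2)^{1/2}$ on $[0,1]$ follows from the first part once $K_0 > 0$ is verified: since $\tanh(\theta x)/\theta \to x$ as $\theta \to 0$, monotone convergence gives $K_0(0) = 1/3 > 0$, and $K_0$ is continuous and positive on $[0,1]$.

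The main obstacle is showing $g'$ is bounded away from zero on $[0,1]$: since $g(0) = 0$, a naive product-rule calculation at $\theta = 0$ produces a $0/0$ indeterminacy. To circumvent this, I work with the squared function $h(\theta) := g(\theta)^2 = \tfrac{1}{2}\int_0^1 \tanh^2(\theta x)\, dx$, whose derivative $h'(\theta) = \int_0^1 x\, \tanh(\theta x)\, \mathrm{sech}^2(\theta x)\, dx$ is strictly positive on $(0,1]$. The identity $g'(\theta) = h'(\theta)/(2 g(\theta))$ is a priori singular at $\theta = 0$, but Taylor expansion of $\tanh^2$ about zero gives $h(\theta) = \theta^2/6 + O(\theta^4)$ and $h'(\theta) = \theta/3 + O(\theta^3)$, so $g(\theta) \sim \theta/\sqrt{6}$ and $g'(\theta) \to 1/\sqrt{6}$ as $\theta \to 0^+$. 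Hence $g'$ extends continuously to all of $[0,1]$ with $g'(0) = 1/\sqrt{6} > 0$ and $g' > 0$ on $(0,1]$, so compactness yields $c := \inf_{[0,1]} g' > 0$, and the inverse function theorem finishes the proof with $C_{g^{-1}} = 1/c$.
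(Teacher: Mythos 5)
Your proof is correct and follows the same basic strategy as the paper — show the relevant derivatives are bounded and invoke the mean value theorem — but you supply a crucial step the paper elides. The paper's proof of the second claim asserts only that ``$g'$ is bounded in $[0,1]$'' together with $(g^{-1})'(\omega) = 1/g'(g^{-1}(\omega))$; boundedness of $g'$ from above, of course, does not give boundedness of $1/g'$. What is actually needed is a uniform lower bound $g' \ge c > 0$, and the potential trouble spot is precisely $\theta = 0$ where a naive product-rule computation is indeterminate. You correctly identify this as ``the main delicacy'' and resolve it cleanly: working with $h = g^2$, Taylor-expanding to get $h(\theta) = \theta^2/6 + O(\theta^4)$ and $h'(\theta) = \theta/3 + O(\theta^3)$, and concluding $g'(\theta) \to 1/\sqrt{6} > 0$ as $\theta \to 0^+$, after which positivity on $(0,1]$ plus compactness gives the needed $c$. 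You also correctly flag (and work around) the typo in the lemma's definition of $g$, which should be $\theta(K_0(\theta)/2)^{1/2}$ as in Eq.~\eqref{eq:trans_neunet}, not $\theta(K_0(\theta)/\theta)^{1/2}$. In short, this is a more complete version of the paper's argument; the extra work you do on the lower bound for $g'$ is not optional, and the paper's proof as written leaves that gap open.
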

\begin{proof}
The function $K_0$ has a bounded derivative for $\omega\in [0,1]$. It follows that $K_0$ is globally Lipschitz on $[0,1]$ with Lipschitz constant $C_{K_0}=\sup_{\omega}\vert K_0'(\omega) \vert$. Also, since $g'$ is bounded in $[0, 1]$ and for 
$\omega=g(\theta)$, $(d/d \omega)g^{-1}(\omega) = 1/g'(g^{-1}(\omega))$, 
the function $g^{-1}$ has a bounded derivative in $[0, 1]$. It follows that $g^{-1}$ is globally Lipschitz on $[0,1]$ with Lipschitz constant $C_{g^{-1}}=\sup_{\omega}\vert \frac{d}{d\omega}g^{-1}(\omega) \vert.$
\end{proof}



\bibliographystyle{plainnat}
\bibliography{sing,sts_singl_additional_refs}

\begin{thebibliography}{}

\bibitem[Abramowitz and Stegun, 1964]{abramowitz1964handbook}
Abramowitz, M. and Stegun, I.~A. (1964).
\newblock {\em Handbook of mathematical functions: with formulas, graphs, and
  mathematical tables}.
\newblock Number~55. Courier Corporation.

\bibitem[Aoyagi, 2010]{aoyagi2010stochastic}
Aoyagi, M. (2010).
\newblock Stochastic complexity and generalization error of a restricted
  {B}oltzmann machine in {B}ayesian estimation.
\newblock {\em Journal of Machine Learning Research}, 11(Apr):1243--1272.

\bibitem[Aoyagi, 2019]{aoyagi2019learning}
Aoyagi, M. (2019).
\newblock Learning coefficient of {V}andermonde matrix-type singularities in
  model selection.
\newblock {\em Entropy}, 21(6):561.

\bibitem[Aoyagi and Watanabe, 2005]{aoyagi2005stochastic}
Aoyagi, M. and Watanabe, S. (2005).
\newblock Stochastic complexities of reduced rank regression in {B}ayesian
  estimation.
\newblock {\em Neural Networks}, 18(7):924--933.

\bibitem[Austin, 2019]{austin2019structure}
Austin, T. (2019).
\newblock The structure of low-complexity {G}ibbs measures on product spaces.
\newblock {\em The Annals of Probability}, 47(6):4002--4023.

\bibitem[Basak and Mukherjee, 2017]{Basak2017}
Basak, A. and Mukherjee, S. (2017).
\newblock Universality of the mean-field for the {P}otts model.
\newblock {\em Probability Theory and Related Fields}, 168(3):557--600.

\bibitem[Bibinger, 2013]{bibinger2013notes}
Bibinger, M. (2013).
\newblock Notes on the sum and maximum of independent exponentially distributed
  random variables with different scale parameters.
\newblock {\em arXiv preprint arXiv:1307.3945}.

\bibitem[Bishop, 2006]{bishop2006pattern}
Bishop, C. (2006).
\newblock {\em Pattern Recognition and Machine Learning}.
\newblock Information Science and Statistics. Springer.

\bibitem[Chatterjee and Dembo, 2016]{chatterjee2016nonlinear}
Chatterjee, S. and Dembo, A. (2016).
\newblock Nonlinear large deviations.
\newblock {\em Advances in Mathematics}, 299:396--450.

\bibitem[Drton et~al., 2017]{drton2017tree}
Drton, M., Lin, S., Weihs, L., and Zwiernik, P. (2017).
\newblock Marginal likelihood and model selection for {G}aussian latent tree
  and forest models.
\newblock {\em Bernoulli}, 23(2):1202--1232.

\bibitem[Drton and Plummer, 2017]{drton2017bayesian}
Drton, M. and Plummer, M. (2017).
\newblock A {B}ayesian information criterion for singular models.
\newblock {\em Journal of the Royal Statistical Society: Series B (Statistical
  Methodology)}, 79(2):323--380.

\bibitem[Friedlander and Joshi, 1998]{friedlander1998introduction}
Friedlander, F.~G. and Joshi, M.~C. (1998).
\newblock {\em Introduction to the Theory of Distributions}.
\newblock Cambridge University Press.

\bibitem[Ghorbani et~al., 2018]{ghorbani2018instability}
Ghorbani, B., Javadi, H., and Montanari, A. (2018).
\newblock An instability in variational inference for topic models.
\newblock In {\em International Conference on Machine Learning}.

\bibitem[Hayashi and Watanabe, 2017]{hayashi2017tighter}
Hayashi, N. and Watanabe, S. (2017).
\newblock Tighter upper bound of real log canonical threshold of non-negative
  matrix factorization and its application to {B}ayesian inference.
\newblock In {\em 2017 IEEE Symposium Series on Computational Intelligence
  (SSCI)}, pages 1--8. IEEE.

\bibitem[Hironaka, 1964]{hironaka1964resolution}
Hironaka, H. (1964).
\newblock Resolution of singularities of an algebraic variety over a field of
  characteristic zero: Ii.
\newblock {\em Annals of Mathematics}, pages 205--326.

\bibitem[{Hosino} et~al., 2005]{hosino2005vhmm}
{Hosino}, T., {Watanabe}, K., and {Watanabe}, S. (2005).
\newblock Stochastic complexity of variational {B}ayesian hidden markov models.
\newblock In {\em Proceedings. 2005 IEEE International Joint Conference on
  Neural Networks, 2005.}, volume~2, pages 1114--1119 vol. 2.

\bibitem[Hosino et~al., 2006]{hosino2006grammar}
Hosino, T., Watanabe, K., and Watanabe, S. (2006).
\newblock Free energy of stochastic context free grammar on variational
  {B}ayes.
\newblock In {\em ICONIP}.

\bibitem[Kass et~al., 1990]{kasstierkad90}
Kass, R., Tierney, L., and Kadane, J. (1990).
\newblock The validity of posterior expansions based on {L}aplace's method.

\bibitem[Lin, 2011]{lin2011algebraic}
Lin, S. (2011).
\newblock {\em Algebraic methods for evaluating integrals in {B}ayesian
  statistics}.
\newblock PhD thesis, UC Berkeley.

\bibitem[MacKay, 2003]{mackay2003information}
MacKay, D.~J. (2003).
\newblock {\em Information theory, inference and learning algorithms}.
\newblock Cambridge university press.

\bibitem[Mathai, 1982]{mathai1982storage}
Mathai, A. (1982).
\newblock Storage capacity of a dam with gamma type inputs.
\newblock {\em Annals of the Institute of Statistical Mathematics},
  34(3):591--597.

\bibitem[Mukherjee et~al., 2018]{mukherjee2018mean}
Mukherjee, S.~S., Sarkar, P., Wang, Y.~R., and Yan, B. (2018).
\newblock Mean field for the stochastic blockmodel: optimization landscape and
  convergence issues.
\newblock In {\em Advances in Neural Information Processing Systems}, pages
  10694--10704.

\bibitem[Parisi, 1988]{parisi1988statistical}
Parisi, G. (1988).
\newblock {\em Statistical Field Theory}.
\newblock Frontiers in Physics. Addison-Wesley.

\bibitem[Pati et~al., 2018]{pati2017statistical}
Pati, D., Bhattacharya, A., and Yang, Y. (2018).
\newblock On statistical optimality of variational {B}ayes.
\newblock In {\em International Conference on Artificial Intelligence and
  Statistics}, pages 1579--1588.

\bibitem[Plummer et~al., 2020]{plummer2020dynamics}
Plummer, S., Pati, D., and Bhattacharya, A. (2020).
\newblock Dynamics of coordinate ascent variational inference: A case study in
  2d {I}sing models.
\newblock {\em arXiv preprint arXiv:2007.06715}.

\bibitem[Robert, 2007]{robert2007bayesian}
Robert, C. (2007).
\newblock {\em The {B}ayesian choice: from decision-theoretic foundations to
  computational implementation}.
\newblock Springer Science \& Business Media.

\bibitem[Rusakov and Geiger, 2005]{rusakov2005asymptotic}
Rusakov, D. and Geiger, D. (2005).
\newblock Asymptotic model selection for naive {B}ayesian networks.
\newblock {\em Journal of Machine Learning Research}, 6(Jan):1--35.

\bibitem[Schwarz, 1978]{schwarz1978estimating}
Schwarz, G. (1978).
\newblock Estimating the dimension of a model.
\newblock {\em The Annals of Statistics}, 6(2):461--464.

\bibitem[Tierney and Kadane, 1986]{tierney1986accurate}
Tierney, L. and Kadane, J. (1986).
\newblock Accurate approximations for posterior moments and marginal densities.
\newblock {\em Journal of the American Statistical Association},
  81(393):82--86.

\bibitem[van~der Vaart and Wellner, 1996]{van1996weak}
van~der Vaart, A.~W. and Wellner, J.~A. (1996).
\newblock Weak convergence.
\newblock In {\em Weak convergence and empirical processes}, pages 16--28.
  Springer.

\bibitem[Vershynin, 2018]{vershyninhigh}
Vershynin, R. (2018).
\newblock High-dimensional probability: An introduction with applications in
  data science. 2018.
\newblock {\em URL https://www. math. uci. edu/\~{}
  rvershyn/papers/HDP-book/HDP-book. pdf}.

\bibitem[Wainwright and Jordan, 2008]{wainwright2008graphical}
Wainwright, M.~J. and Jordan, M.~I. (2008).
\newblock {\em Graphical models, exponential families, and variational
  inference}.
\newblock Now Publishers Inc.

\bibitem[Wang et~al., 2006]{wang2006convergence}
Wang, B., Titterington, D., et~al. (2006).
\newblock Convergence properties of a general algorithm for calculating
  variational {B}ayesian estimates for a normal mixture model.
\newblock {\em Bayesian Analysis}, 1(3):625--650.

\bibitem[Watanabe et~al., 2009]{watanabe2009bipartite}
Watanabe, K., Shiga, M., and Watanabe, S. (2009).
\newblock Upper bound for variational free energy of {B}ayesian networks.
\newblock {\em Mach. Learn.}, 75(2):199–215.

\bibitem[{Watanabe} and {Watanabe}, 2004]{watanabe2004gaussian}
{Watanabe}, K. and {Watanabe}, S. (2004).
\newblock Lower bounds of stochastic complexities in variational {B}ayes
  learning of {G}aussian mixture models.
\newblock In {\em IEEE Conference on Cybernetics and Intelligent Systems,
  2004.}, volume~1, pages 99--104 vol.1.

\bibitem[Watanabe and Watanabe, 2005]{watanabe2005exponential}
Watanabe, K. and Watanabe, S. (2005).
\newblock Stochastic complexity for mixture of exponential families in
  variational {B}ayes.
\newblock In Jain, S., Simon, H.~U., and Tomita, E., editors, {\em Algorithmic
  Learning Theory}, pages 107--121, Berlin, Heidelberg. Springer Berlin
  Heidelberg.

\bibitem[Watanabe and Watanabe, 2006]{watanabe2006stochastic}
Watanabe, K. and Watanabe, S. (2006).
\newblock Stochastic complexities of {G}aussian mixtures in variational
  {B}ayesian approximation.
\newblock {\em Journal of Machine Learning Research}, 7(Apr):625--644.

\bibitem[Watanabe and Watanabe, 2007a]{watanabe2007mixture}
Watanabe, K. and Watanabe, S. (2007a).
\newblock Stochastic complexities of general mixture models in variational
  {B}ayesian learning.
\newblock {\em Neural Networks}, 20(2):210 -- 219.

\bibitem[Watanabe and Watanabe, 2007b]{watanabe2007generalized}
Watanabe, K. and Watanabe, S. (2007b).
\newblock Stochastic complexity for mixture of exponential families in
  generalized variational {B}ayes.
\newblock {\em Theoretical Computer Science}, 387(1):4 -- 17.
\newblock Algorithmic Learning Theory.

\bibitem[Watanabe, 1999]{watanabe1999algebraic}
Watanabe, S. (1999).
\newblock Algebraic analysis for singular statistical estimation.
\newblock In {\em International Conference on Algorithmic Learning Theory},
  pages 39--50. Springer.

\bibitem[Watanabe, 2001a]{watanabe2001balgebraic}
Watanabe, S. (2001a).
\newblock Algebraic analysis for nonidentifiable learning machines.
\newblock {\em Neural Computation}, 13(4):899--933.

\bibitem[Watanabe, 2001b]{watanabe2001algebraic}
Watanabe, S. (2001b).
\newblock Algebraic geometrical methods for hierarchical learning machines.
\newblock {\em Neural Networks}, 14(8):1049--1060.

\bibitem[Watanabe, 2009]{watanabe2009algebraic}
Watanabe, S. (2009).
\newblock {\em Algebraic geometry and statistical learning theory}, volume~25.
\newblock Cambridge University Press.

\bibitem[Watanabe, 2013]{watanabe2013widely}
Watanabe, S. (2013).
\newblock A widely applicable {B}ayesian information criterion.
\newblock {\em Journal of Machine Learning Research}, 14(Mar):867--897.

\bibitem[Watanabe, 2018]{watanabe2018mathematical}
Watanabe, S. (2018).
\newblock {\em Mathematical theory of {B}ayesian statistics}.
\newblock CRC Press.

\bibitem[Yamazaki and Watanabe, 2003]{yamazaki2003singularities}
Yamazaki, K. and Watanabe, S. (2003).
\newblock Singularities in mixture models and upper bounds of stochastic
  complexity.
\newblock {\em Neural networks}, 16(7):1029--1038.

\bibitem[Yan, 2020]{yan2020nonlinear}
Yan, J. (2020).
\newblock Nonlinear large deviations: Beyond the hypercube.
\newblock {\em Annals of Applied Probability}, 30(2):812--846.

\bibitem[Yang et~al., 2020]{yang2017alpha}
Yang, Y., Pati, D., Bhattacharya, A., et~al. (2020).
\newblock $\alpha$-variational inference with statistical guarantees.
\newblock {\em Annals of Statistics}, 48(2):886--905.

\bibitem[Zhang and Zhou, 2020]{zhang2017theoretical}
Zhang, A.~Y. and Zhou, H.~H. (2020).
\newblock Theoretical and computational guarantees of mean field variational
  inference for community detection.
\newblock {\em The Annals of Statistics (to appear)}.

\end{thebibliography}


%
%
%
\end{document}